\title{Representation embeddings and the second Brauer-Thrall conjecture}
\author{Klaus Bongartz\thanks{email:klausbongartz@online.de}\\University of Wuppertal}
\date{To my family}
\newtheorem{definition}{Definition}
\newtheorem{theorem}{Theorem}
\newtheorem{lemma}{Lemma}
\newtheorem{corollary}{Corollary}
\newtheorem{proposition}{Proposition}
\begin{document}
\maketitle
\begin{abstract}
 A representation embedding $F:mod\,A \rightarrow mod\,B$ between categories of finite-dimensional left modules over the $k$-algebras $A$ and $B$ is an exact functor preserving indecomposables and reflecting isomorphisms.  We show as our strongest result: For any basic representation infinite  $k$-split algebra A of dimension d there is a bimodule $_{A}M_{k[T]}$ which is free as a right module over $k[T]$  of  rank at most the maximum of 4d and 30 such that tensoring with $M$ is a representation embedding $mod\,k[T] \rightarrow mod\,A$.  The article contains other new results and  new interpretations or proofs of older material. Details are given in the following introduction.
\end{abstract}

\section{Introduction}

In section 2 we repete some  definitions and  basic properties concerning representation embeddings. The third section provides three important examples:
First we derive Gabriels generalization of the old Jans conditions on finite representation type and a result of Gelfand and Ponomarev from a new representation embedding ( see \cite{GI2,Jans,GP} ). 
Then we  
optimize Brenners famous  strict representation embedding $F$ from \cite{Brenner} and we show it induces an isomorphism between   the lattices of submodules of any module $M$ and $FM$. The third 
example constructs a representation embedding via extensions.  I observed this some years ago and it should  be already somewhere in the literature. 
 
The result of section 4 is somewhat surprising at first sight: Given any wild quiver $Q$ and any family  $(A_{i}), i \in I,$ of finitely generated algebras $A_{i}$ such that the cardinality of $I$ is  not greater than that
  of $k$ there is a family $$F_{i}: mod\,A_{i} \rightarrow mod\,kQ$$ of representation embeddings with $Hom_{kQ}(F_{i}X,F_{j}Y)=0$  for all $X,Y$ provided $i\neq j$.  The  proof requires only very few explicit matrix calculations
and elementary set theory.

Section  5 is the heart of the article with the theorem mentioned in the abstract. Its proof  depends on several deep theorems. 
Some historical remarks about this precise `numerical' version of the second Brauer-Thrall conjecture are also made.

The final section 6 contains  results about ( strict ) embedding classes and the partial orders relating them. We  determine  the minimal classes of  
representation-infinite algebras.
 In positive
 characteristic there is only the class of the classical Kronecker-algebra  whereas in characteristic 0 also the class of $k[X,Y]/(X,Y)^{2}$ is minimal.

\section{Basic material}
\subsection{Some notations and definitions}

  We are always working over an algebraically closed field $k$ and our algebras are associative finitely generated with unit i.e. quotients of the free associative algebra 
 $k\langle X_{1},X_{2}, \ldots ,X_{n}\rangle $ which is  isomorphic to the path algebra of the quiver $L_{n}$ consisting of one point and  $n$ loops. Also the  generalized Kronecker
 quiver $K_{n}$ 
consisting of two points $a,b$ and $n$ arrows from $a$ to $b$ plays an important role. For an algebra $A= kQ/I$ we identify modules with representations satisfying the relations 
imposed 
by $I$.  By  $mod \, A$  resp. $rep\,Q$ we denote the category of all finite dimensional  left $A$-modules resp. the category of all covariant finite-dimensional representations of $Q$. 
The usual duality between $mod \, A$ and  $mod \,A^{op}$ resp. between $rep\,Q$ and $rep \,Q^{op}$ is denoted by $D$.

	 \begin{definition} Let $A,B$  be some algebras as above.
	\begin{enumerate}
	\item A  representation embedding from $mod \,A$ to $mod\,B$ is a $k$-linear covariant functor $F:mod\,A \longrightarrow mod\,B$ that is exact, maps indecomposables to indecomposables 
and reflects isomorphism classes.  
	\item If $F$ as above is in addition full we  call it a strict representation embedding.
	\item We write $A \leq B$ resp. $A \leq_{s} B$  if there is a representation embedding  resp. a strict representation embedding from $mod\,A$ to $mod\,B$.  
 
\item Two algebras $A$ and $B$ with $A \leq B$  and $B \leq A$  are called embedding equivalent. They belong to the same embedding class. Analogous definitions 
hold for strict representation embeddings.
	\item An algebra $A$ with $k\langle X,Y \rangle \leq A$ resp. $k\langle X,Y \rangle \leq_{s} A$  is called  wild resp. strictly wild.

	\end{enumerate}
	\end{definition}

The three conditions a representation embedding has to satisfy are independent of each other as trivial examples of functors having  only two of the required properties show. 
The following  observations are often useful. 
\begin{lemma}
 Let $F:mod \,A \longrightarrow mod \,B $ be an exact $k$-linear functor that preserves indecomposability. Then we have:
\begin{enumerate}
 \item $F$ is a representation embedding if and only if it reflects the isomorphism classes of indecomposables.
\item $F$ induces injections on the spaces of homomorphisms and extensions.
\end{enumerate}

\end{lemma}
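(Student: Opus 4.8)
The plan is to deduce (i) from the Krull--Remak--Schmidt theorem --- available here since finite dimensional modules have finite dimensional, hence semiperfect, endomorphism rings --- and to treat the two assertions in (ii) separately, the one about extensions being the point that needs real work.

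For (i) only the implication from right to left requires an argument. Assuming $F$ reflects the isomorphism classes of indecomposables and given $FM\cong FN$, I would write $M\cong\bigoplus_i M_i^{a_i}$ and $N\cong\bigoplus_j N_j^{b_j}$ with the $M_i$, resp.\ the $N_j$, pairwise non-isomorphic indecomposables. Since $F$ preserves indecomposability and reflects isomorphisms between indecomposables, the $FM_i$ and the $FN_j$ are again pairwise non-isomorphic indecomposables, so $\bigoplus_i FM_i^{a_i}$ and $\bigoplus_j FN_j^{b_j}$ are the Krull--Schmidt decompositions of the isomorphic modules $FM$ and $FN$; comparing these and applying reflection on indecomposables once more gives $M\cong N$.

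For (ii) I would first note that $F$ annihilates no nonzero module: an indecomposable summand $X'$ of $X\neq 0$ gives an indecomposable, hence nonzero, summand $FX'$ of $FX$. Thus $FX=0$ forces $X=0$, and together with exactness this shows that $F$ preserves and reflects monomorphisms and epimorphisms (apply $F$ to kernels and cokernels), in particular that $F$ reflects isomorphisms. Injectivity on homomorphisms follows at once: the comparison map $\mathrm{Hom}_A(M,N)\to\mathrm{Hom}_B(FM,FN)$ is $k$-linear, so it is enough to see that $Ff=0$ implies $f=0$; factoring $f=\iota\pi$ with $\pi$ an epimorphism and $\iota$ a monomorphism, exactness makes $F\pi$ epi and $F\iota$ mono, and $0=Ff=(F\iota)(F\pi)$ forces $F\pi=0$, whence $F(\mathrm{im}\,f)=0$, whence $\mathrm{im}\,f=0$.

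Injectivity on extensions is where I expect the difficulty. The induced map on $\mathrm{Ext}^1$ is additive since $F$ is exact, so it suffices to show that a non-split sequence $\xi\colon 0\to N\stackrel{u}{\longrightarrow}E\stackrel{v}{\longrightarrow}M\to 0$ stays non-split under $F$. A routine reduction --- push $\xi$ out to an indecomposable direct summand of $N$ on which it is still non-split, an operation $F$ respects since it is exact --- lets me assume $N$ indecomposable. Suppose then that $F\xi$ splits and decompose $E=\bigoplus_l E_l$ into indecomposables; writing $u_l\colon N\to E_l$ for the components of $u$, the split monomorphism $Fu$ yields maps $w_l\colon FE_l\to FN$ with $\sum_l w_l\,Fu_l=\mathrm{id}_{FN}$. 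As $FN$ is indecomposable, $\mathrm{End}_B(FN)$ is local, so some $w_{l_0}Fu_{l_0}$ is invertible; hence $Fu_{l_0}$ is a split monomorphism into the indecomposable module $FE_{l_0}$, hence an isomorphism. Since $F$ reflects isomorphisms, $u_{l_0}$ is an isomorphism too, so the projection $E\to E_{l_0}$ followed by $u_{l_0}^{-1}$ is a retraction of $u$ and $\xi$ splits --- a contradiction. Hence $F\xi$ is non-split. (Alternatively one could derive this from the reflection of isomorphism classes together with Miyata's theorem, but the argument above needs no such input.)
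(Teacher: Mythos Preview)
Your proof is correct. The paper actually gives no proof of this lemma at all: it introduces it with the phrase ``The following observations are often useful'' and then moves on, evidently regarding both parts as routine. So there is nothing to compare against beyond noting that the author considers the result folklore.

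Since you have supplied an honest argument, a couple of minor remarks. In (ii), your reduction to $N$ indecomposable is justified by the additive decomposition $\mathrm{Ext}^1(M,N)\cong\bigoplus_i\mathrm{Ext}^1(M,N_i)$ together with the fact that exact functors commute with pushouts; you state this correctly but tersely. The core step---that in a local ring a sum equal to the identity must contain a unit summand---is exactly the right idea and is used repeatedly in the paper in analogous situations (e.g.\ in the proof of Theorem~1). Your alternative via Miyata's theorem would also work and is arguably closer in spirit to how the author uses the lemma later: several arguments in the paper conclude non-splitting of $F\xi$ simply from $FE\not\cong FN\oplus FM$, which is precisely Miyata applied after (i).
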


Analogous definitions and statements hold for representations of quivers.
 Sometimes we consider a functor $F:C \longrightarrow D$  where $C$ and $D$ are  full subcategories of  module categories 
which are closed under extensions, images and isomorphisms. Then $C$ and $D$ inherit the structure of an exact category and  they are closed under direct summands. 
The definitions and  lemma 1 carry over to this more general situation.

 \subsection{First results and examples}
	
The following result applies in particular to representation embeddings.
	
\begin{proposition} Let $F: mod\,A \longrightarrow mod \,B$ be a faithful, $k$-linear, exact functor where $A$ and $B$ are finite-dimensional algebras. Then we have:
\begin{enumerate}
 \item $F(A)=_{B}M_{A}$ is a bimodule where the action of $A$ is induced via $F$ from the isomorphism $End (_{A}A) \simeq A^{op}$.
\item $F$ is isomorphic to the functor $X \mapsto M\otimes X $ where the tensor product is taken over $A$.
\item $M_{A}$ is a progenerator.
\item Set $A_{A}=\oplus_{i=1}^{r} P_{i}^{n_{i}}$ with pairwise non-isomorphic indecomposable $P_{i}$'s and suppose that  $M_{A}=\oplus_{i=1}^{r} P_{i}^{m_{i}}$.
Then we have $m_{i}= dim FS_{i}$. Thus $M_{A}$ is free of rank $q$ iff $dim FS_{i}=q dim S_{i}$ holds for all simples. 
\item $F$ is the composition of a Morita-equivalence and the restriction corresponding to the ring homomorphism $\phi: B \longrightarrow End (M_{A})$ giving the bimodule-structure.
\end{enumerate}
 
\end{proposition}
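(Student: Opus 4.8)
The plan is to read the statement as the Eilenberg--Watts theorem specialised to finite-dimensional algebras, together with classical Morita theory, and to run through (i)--(v) essentially in that order. For (i), recall that the isomorphism $End(_AA)\simeq A^{op}$ identifies $a\in A$ with right multiplication $\rho_a:b\mapsto ba$ on $_AA$. Applying the additive functor $F$ to the endomorphisms $\rho_a$ produces a ring homomorphism $A^{op}\to End(_BM)$, where $M:=F(A)$, and because each $F(\rho_a)$ is a morphism of $B$-modules the resulting right $A$-action on $M$ commutes with the left $B$-action. Hence $M={}_BM_A$ is a $B$-$A$-bimodule, as claimed.

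For (ii) I would define $\mu_X:M\otimes_A X\to F(X)$ on elementary tensors by $m\otimes x\mapsto F(\lambda_x)(m)$, where $\lambda_x:{}_AA\to X$ sends $a$ to $ax$. The single identity $\lambda_x\circ\rho_a=\lambda_{ax}$ shows at once both that $\mu_X$ is well defined (balanced over $A$, using the description of the right $A$-action from (i)) and that $\mu$ is natural in $X$. Now $M\otimes_A-$ is right exact, $F$ is right exact (being exact) and $k$-linear hence additive, both commute with finite direct sums, and $\mu_A$ is the canonical isomorphism $M\otimes_A A\cong M$; since every module occurring here is finitely presented, a free presentation $A^s\to A^t\to X\to 0$ and the five lemma give that $\mu_X$ is an isomorphism for every $X$. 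This verification is the only step that is not purely formal, and even it is routine.

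For (iii) and (iv): by (ii) the functor $M\otimes_A-\cong F$ is exact, so $M_A$ is flat, hence --- being finite-dimensional, so finitely presented --- projective; write $M_A=\oplus_i P_i^{m_i}$ (the $P_i$ being all the indecomposable projective right modules, by the decomposition of $A_A$). For a simple module $S_j$ we then have $FS_j\cong M\otimes_A S_j\cong\oplus_i(P_i\otimes_A S_j)^{m_i}$, and choosing primitive idempotents with $e_iA\cong P_i$ we get $P_i\otimes_A S_j\cong e_iS_j$, which is zero for $i\ne j$ and one-dimensional for $i=j$ (here $S_j$ simple and $k$ algebraically closed are used, giving also $\dim_k S_j=n_j$). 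Thus $\dim FS_j=m_j$, and $M_A$ is free of rank $q$, i.e. $m_i=qn_i$ for all $i$, exactly when $\dim FS_i=q\dim S_i$ for all simples. Finally $F$ faithful forces $FS_i\ne 0$, so $m_i\ge 1$ for every $i$; therefore $A_A$ is a direct summand of $M^N$ with $N=\max_i n_i$, so $M_A$ is a generator, and being projective it is a progenerator.

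For (v), set $\Lambda=End(M_A)$, so that $M={}_\Lambda M_A$; by (iii) and Morita's theorem the functor $M\otimes_A-:mod\,A\to mod\,\Lambda$ is an equivalence. The left $B$-action on $M$ commutes with the right $A$-action, so it is given by a ring homomorphism $\phi:B\to\Lambda=End(M_A)$, and under the identification of (ii) the functor $F$ is precisely this Morita equivalence followed by restriction of scalars along $\phi$. The only real obstacle throughout is bookkeeping --- keeping the left $B$-structure and the right $A$-structure on $M$ apart, and the five-lemma argument in (ii); nothing here goes beyond Eilenberg--Watts and Morita theory.
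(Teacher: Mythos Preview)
Your proof is correct and follows essentially the same route as the paper: the bimodule structure is immediate, part (ii) is Eilenberg--Watts, exactness gives flatness hence projectivity, faithfulness gives the generator property, and (v) is straight Morita theory. The only minor variation is that the paper deduces projectivity from flatness via the duality $Hom_k(M\otimes_A X,k)\simeq Hom_A(X,Hom_k(M,k))$ (flat $\Leftrightarrow$ $DM$ injective $\Leftrightarrow$ $M$ projective), whereas you use ``flat and finitely presented implies projective''; both are standard and equally short here.
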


\begin{proof}
 That $F(A)$ is a bimodule is trivial and that $F$ can be described by the tensorproduct is a nice observation of Eilenberg and Watts (   \cite[page 312]{SS} ) that holds for
 any right-exact functor.

 The exactness means that 
$M$ is flat as an $A$-module. But the adjoint isomorphism $Hom _{k}(M\otimes X, k)\simeq Hom_{A}(X,Hom_{k}(M,k))$ shows that 'flat' is the same as ' projective' in $ mod\,A^{op}$. 
Because $F$ is faithful $M$ is a generator. The rest is easy. 
\end{proof}

The case where $F: mod \,A \longrightarrow mod \,B$ is given by tensoring with a bimodule which is free over $A$ occurs often when $A$ is the free algebra in several unknowns and $B$ is 
finite dimensional. Then we call
the bimodule affine if there is an affine basis i.e. an $A$-basis $m_{1},m_{2},\ldots ,m_{n}$ such that for all $b \in B$ and all $i$ the vector $bm_{i}$ is an $A$-linear combination of 
the basis with coefficients ( depending on b )
 of degree $\leq 1$. Observe that the rank  of a free $A$ module is uniquely determined because the groundfield $k$ is a residue algebra of $A$. There is the following simple observation:

\begin{lemma} Let $F: mod\,A \longrightarrow mod \,B$ and  $G:mod \,B \longrightarrow mod \,C$ be  exact functors which are given by tensoring with 
bimodules $_{B}M_{A}$ and $_{C}N_{B}$ 
that are finitely  generated as right modules. Suppose $B$ and $C$ are finite-dimensional.
Then we have:
\begin{enumerate}
 
 \item $G\circ F$ is induced by tensoring with $_{C}L_{A}=_{C}N \otimes M_{A}$.
\item If $_{B}M_{A}$ is free over $A$ of rank $m$ and $_{C}N_{B}$ is free over $B$ of rank $n$ then $_{C}L_{A}$ is free over $A$ of rank $mn$.
\item Let $A$ be $k\langle X_{1},X_{2},\ldots ,X_{n} \rangle$. If $_{B}M_{A}$ is affine and $_{C}N_{B}$ is free then $_{C}L_{A}$ is affine. 
\item Let $A$ be $k\langle X_{1} \rangle$. If $_{B}M_{A}$ is free over $A$  then $_{C}L_{A}$ is free.
\end{enumerate}

\end{lemma}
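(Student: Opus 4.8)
The plan is to reduce everything to the associativity of the tensor product together with elementary facts about free and projective modules. For (i), the canonical natural isomorphism $(N\otimes_B M)\otimes_A X\cong N\otimes_B(M\otimes_A X)$ identifies $G\circ F$ with the functor $X\mapsto L\otimes_A X$, where $L=N\otimes_B M$. This $L$ is visibly a $C$-$A$-bimodule, and it is finitely generated as a right $A$-module: writing $N=\sum_i n_iB$ and $M=\sum_j m_jA$ and using the right $B$-action on $M$, one checks directly that the finite set $\{n_i\otimes m_j\}$ generates $L$ over $A$.

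For (ii) I would fix a right $B$-basis of $N$, so that $N\cong B^{n}$ as right $B$-modules; then $L=N\otimes_B M\cong B^{n}\otimes_B M\cong M^{n}$ as right $A$-modules, and combined with $M\cong A^{m}$ this yields $L\cong A^{mn}$, the rank being unambiguous since $k$ is a residue algebra of $A$. For (iii), with $A=k\langle X_1,\dots,X_n\rangle$, let $n_1,\dots,n_t$ be a right $B$-basis of $N$ and $m_1,\dots,m_s$ an affine $A$-basis of $M$. The same decomposition $L\cong\bigoplus_p n_p\otimes M$ as in (ii) shows that the elements $n_p\otimes m_i$ form an $A$-basis of $L$. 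Writing $c\,n_p=\sum_q n_q\,\beta_{qp}(c)$ with $\beta_{qp}(c)\in B$ and $b\,m_i=\sum_j m_j\,\alpha_{ji}(b)$ with each $\alpha_{ji}(b)$ of degree $\le 1$, one computes $c(n_p\otimes m_i)=\sum_{q,j}(n_q\otimes m_j)\,\alpha_{ji}(\beta_{qp}(c))$, which is an $A$-combination of the chosen basis with coefficients of degree $\le 1$. Hence this basis is affine.

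The step I expect to be the main obstacle is (iv), precisely because freeness of $N$ over $B$ is \emph{not} assumed there. Instead one has to use that $G$ is exact: this forces $N$ to be flat as a right $B$-module, and since $B$ is finite dimensional and $N$ is finitely generated, flatness implies projectivity, so $N$ is a direct summand of some $B^{r}$. Then $L=N\otimes_B M$ is a direct summand of $B^{r}\otimes_B M\cong M^{r}$, which is free over $A=k[X_1]$ because $M$ is. By (i) the module $L$ is finitely generated over the principal ideal domain $A$, so, being projective, it is free. (One may observe in passing that the hypothesis that $M$ be free over $A$ is in fact automatic here, being forced by exactness of $F$ together with finite generation over the principal ideal domain $k[X_1]$.)
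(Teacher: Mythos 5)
Your proof is correct and follows essentially the same route as the paper: part (iii) by checking that the tensors $n_p\otimes m_i$ of a $B$-basis of $N$ with an affine $A$-basis of $M$ form an affine basis of $L$, and part (iv) by realizing $L$ as a direct summand of a free $A$-module (via projectivity of $N$) and invoking the standard fact that over the PID $k\langle X_1\rangle=k[X_1]$ this forces $L$ to be free. The paper states these steps only as brief hints; you have merely filled in the details, including the point (left implicit in the paper) that exactness of $G$ plus finite generation makes $N$ projective over $B$.
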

\begin{proof}
 The proof is straightforward. For part three one verifies that the tensors $n_{i}\otimes m_{j}$ formed by the members of an arbitrary basis of $N$ and an affine basis of $M$ 
form an affine basis of the tensor product. The last part  uses that $_{C}L_{A}$ is a direct summand whence a submodule of a free module. A standard result of basic algebra 
says that
 $_{C}L_{A}$ is
 free. In fact by a result of  Cohn  this holds also for $n>1$.
\end{proof}

We end this section with a useful strict representation embedding which  shows how to get rid of composable arrows.
Let $Q$ be an arbitrary quiver. We define a new quiver $\tilde{Q}$ by dividing each point $p$ of $Q$ into an emitter $p^{+}$ and a receiver $p^{-}$. An arrow $\alpha$ in $Q$ from $p$ to $q$
induces in $\tilde{Q}$ an arrow from $p^{+}$  to $q^{-}$ again called $\alpha$.  Besides the induced arrows one has in $\tilde{Q}$ for each point $p$ of $Q$ an additional arrow $\alpha_{p}$ 
from $p^{+}$ to $p^{-}$. It is clear that $\tilde{Q}$ contains only sources and sinks.
Given a representation $M$ of $Q$ we define the representation $FM$ of $\tilde{Q}$ by $(FM)(p^{+})=(FM)(p^{-})=M(p)$, by $(FM)(\alpha)=M(\alpha)$ for the induced arrows and 
by $(FM)(\alpha_{p}) = id_{M(p)}$
for the additional arrows.

Obviously $F$ induces an equivalence with the full exact subcategory of $rep \,\tilde{Q}$ where all additional arrows are represented by bijections. As a special case  one gets for 
 $L_{n}$ a strict representation embedding
 $F: rep\, L_{n} \longrightarrow rep\, K_{n+1}$. The reader can verify that it is given by tensoring with an affine bimodule of rank $2$.

\section{Some  examples of representation embeddings}

\subsection{Two results of Jans resp. Gabriel and of  Gelfand and Ponomarev via representation embeddings}

In 1957 Jans formulated the famous Brauer-Thrall conjectures in \cite{Jans} and he proved the second conjecture for certain algebras. Generalizing results of Brauer, Nakayama and Thrall 
he attached a bipartite quiver $K=K(A,I)$
to an algebra $A$ and a twosided ideal $I$ lying in the two-sided socle and the radical. If this quiver contains a double-arrow, an $\tilde{A}_{n}$ or a $\tilde{D}_{n}$ he constructed
 case by case  in infinitely many dimensions infinitely many pairwise non-isomorphic indecomposables. At that time the notions of a quiver and its representations were not yet properly 
formulated and so the proofs are lengthy with many verifications left to the reader. 

In section 9 of the survey article \cite{GI2} Gabriel interpreted the results of Jans by defining a functor $G:mod\,A \longrightarrow rep\,K$ that induces a bijection 
between the isomorphism
classes in a certain full subcategory $C$ of $mod\,A$ and the full subcategory $rep_{'}K$ of all representations not containing a simple projective as a direct summand. Here $C$ consists of
all modules $X$ such that $X/TX$ is a projective $A/T$-module. Now the functor  $G$ is not exact and it is not obvious for which exact structure of $C$ the restriction of $G$ to $C$
 becomes exact.

Here we define a representation embedding $F:rep'K \longrightarrow  mod\,A$ where $K$ is now the  quiver opposite to the quiver constructed by Jans and where $rep'K$ is 
the full subcategory of representations that have no simple injective direct summand. This subcategory is closed under extensions, submodules  and isomorphisms whence the term 'representation embedding'
makes sense.

So we consider a finite-dimensional algebra $A=kQ/I$ and a twosided ideal $T$ contained in the radical and annihilated on both sides by the radical. Any point $x$ in $Q$ splits into two 
points $x^{+}$ and $x^{-}$ of the bipartite quiver $K$ and  we choose a base vector $v_{x}$ in each simple $A$-module $S_{x}$. 
For any two points $x,y$ in $Q$ we set
$n_{xy}= \,dim \,e_{x}Te_{y}$, we draw $n_{xy}$ arrows from $x^{+}$ to $y^{-}$ and we denote them by $\alpha(x,y,j)$ ,$1 \leq j \leq n_{xy}$. We also choose a basis 
$s(x,y,j)$, $1 \leq j \leq n_{xy}$ of $e_{x}Te_{y}$ for each pair with $n_{xy}\neq 0$. Of course $K$ need not be connected even if $Q$ is so.

Now we construct a functor $F: rep'K \longrightarrow mod\,A$. Given a representation $M$ in $rep'K$ we look at the map 
$$f_{M}: S_{M}=\oplus _{x \in Q} S_{x}\otimes M(x^{+}) \longrightarrow P_{M}:=\oplus_{y\in Q} Ae_{y} \otimes M(y^{-})$$ that sends $v_{x}\otimes v \in S_{x}\otimes M(x^{+})$ to $\sum_{y,j}\, s(x,y,j) \otimes \alpha(x,y,j)(v)$.
 
The tensor product is taken over $k$. Note that $M(x^{+})=0$   if there is no arrow starting in $x^{+}$ because $M$ is in $rep'K$.  Here $S_{M}$ is semi-simple, $P_{M}$ is projective and $f_{M}$ is $A$-linear 
with image  contained in the radical. Thus the projection
 $p_{M}: P_{M} \longrightarrow FM:=Coker \,f_{M}$ is a projective cover. Moreover $f_{M}$ is injective because $M$ belongs to $rep'K$. 

Any morphism $g:M  \longrightarrow M'$ induces in
an obvious way 
$A$-module homomorphisms
$g_{1}=\oplus_{x} \, (id_{S_{x}}\otimes g(x^{+})):S_{M} \longrightarrow S_{M'}$ and $g_{2}=\oplus_{y} \, (id_{Ae_{y}}\otimes g(y^{-})):P_{M} \longrightarrow P_{M'}$ such that $f_{M'}\circ g_{1}= g_{2}\circ f_{M}$. Thus we get an induced 
homomorphism $Fg: FM \longrightarrow FM'$ and the wanted functor is defined.

\begin{theorem}  We keep all the assumptions and notations introduced above. Then the  following holds:
\begin{enumerate}
 \item $F$ is a representation embedding.
\item The essential image  of $F$ is the category $C$ defined before.
\item  $F$ is strict iff  $Q$ contains no arrows.
\item If $ n:=n_{xy} \geq 2$ for some not necessarily distinct points $x,y$ then there is a representation embedding 
$$H: mod \,k\langle X_{1},X_{2},\ldots ,X_{n-1}\rangle \longrightarrow mod\,A,$$
that is induced by an affine bimodule of rank $dim\,Ae_{y}\,-\,1$.
\end{enumerate}

\end{theorem}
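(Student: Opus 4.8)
The plan is to establish the four assertions in the order they are stated, since each uses the machinery set up for the previous one. The central object is the functor $F$ built from the projective cover $p_M: P_M \to FM = \operatorname{Coker} f_M$, and the first task is to show $F$ is exact. Since $f_M$ is injective for $M \in rep'K$, one has a short exact sequence $0 \to S_M \to P_M \to FM \to 0$, and both $M \mapsto S_M$ and $M \mapsto P_M$ are visibly exact (they are built by tensoring the vertex spaces $M(x^{\pm})$ over $k$ with fixed modules $S_x$, $Ae_y$). A short diagram chase with the snake lemma, applied to the commuting squares $f_{M'} g_1 = g_2 f_M$ attached to a short exact sequence in $rep'K$, then gives exactness of $F$. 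For faithfulness and the injectivity on Hom's, I would read off that a morphism $g: M \to M'$ with $Fg = 0$ forces the lift $g_2: P_M \to P_{M'}$ to factor through $S_{M'}$, hence (comparing tops of projective covers) $g_1 = 0$, hence each $g(x^+) = 0$; and since $M \in rep'K$ has no simple injective summand, the spaces $M(x^-)$ with no arrow into $x^-$ also vanish, so $g = 0$ — this is exactly where the restriction to $rep'K$ is used.

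For the identification of the essential image with $C$ (part ii) and the reflection of isomorphism classes needed to complete part i, the key is that $p_M: P_M \to FM$ is a \emph{projective cover} with $P_M = \bigoplus_y Ae_y \otimes M(y^-)$ and kernel the semisimple module $S_M$. Thus from $FM$ one recovers $P_M$ as a projective cover, recovers $M(y^-)$ from the multiplicity of $Ae_y$ in $P_M$ (using that the $Ae_y$ are the indecomposable projectives), and recovers $M(x^+)$ together with all the structure maps $M(\alpha(x,y,j))$ from the inclusion $S_M \hookrightarrow \operatorname{rad} P_M$ read in the chosen bases $s(x,y,j)$ of $e_xTe_y$. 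Conversely, an object $X$ lies in the essential image iff $X/TX$ is projective over $A/T$: given such an $X$, take a projective cover $P \to X$, note its kernel lands in $TP$ because $X/TX$ is projective, and then the kernel, being killed on both sides by the radical, is semisimple and its embedding into $\operatorname{rad} P \cap TP$ is precisely the datum of a representation $M \in rep'K$ with $FM \cong X$. Preservation of indecomposability follows since $FM$ indecomposable $\iff$ its projective cover $P_M$ is indecomposable-decomposable in the same way as... more carefully: $\operatorname{End}(FM)$ is computed from $\operatorname{End}_K(M)$ via the faithful exact functor together with the projective-cover rigidity, so idempotents correspond.

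Part iii is a direct consequence of the description of morphisms: $F$ is full iff every $A$-homomorphism $FM \to FM'$ lifts to the tops, i.e. comes from a morphism of representations; the obstruction is exactly the extra morphisms $P_M \to P_{M'}$ coming from nonzero maps $Ae_y \to Ae_{y'}$, i.e. from paths in $Q$, so $F$ is strict precisely when $Q$ has no arrows (so that $Ae_y = S_y$ and $\operatorname{Hom}(Ae_y, Ae_{y'}) = \delta_{yy'}k$). For part iv, I would compose $F$ with the known strict representation embedding $rep\, L_{n-1} \to rep\, K_n$ from the end of Section 2 (the one getting rid of composable arrows, given by an affine bimodule of rank $2$), restricted/corestricted appropriately so that its image lands in $rep'K$ via the subquiver consisting of the $n$ arrows $\alpha(x,y,j)$ from $x^+$ to $y^-$; then invoke Lemma 3 parts (ii) and (iii) on composition of bimodules to see that the composite is induced by an affine bimodule, and compute its rank as (rank of the $L_{n-1}$-embedding) $\times$ (something), landing on $\dim Ae_y - 1$ by tracking $P_M$ for the relevant small representations $M$. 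The main obstacle I anticipate is part ii: verifying cleanly that the essential image is \emph{exactly} $C$ requires the structural fact that a submodule of $\operatorname{rad} P$ annihilated by the radical on both sides is automatically semisimple and, more delicately, that every $X$ with $X/TX$ projective over $A/T$ arises this way — the containment $\ker(P \to X) \subseteq TP$ and the interplay between $TP$ and $\operatorname{rad}^2 P$ needs care, and this is where I would spend the most effort.
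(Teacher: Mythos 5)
Your handling of parts (i)--(iii) tracks the paper's proof closely enough. Two remarks. For reflection of isomorphism classes you argue by \emph{reconstructing} $M$ from $FM$ (recover $P_M$ as projective cover, read off the $M(y^-)$ from multiplicities of the $Ae_y$, read off the $M(x^+)$ and the structure maps from $S_M\hookrightarrow\mathrm{rad}\,P_M$ in the chosen bases $s(x,y,j)$); the paper instead shows that \emph{every} $\rho\colon FM\to FM'$ lifts to a lower-triangular $\sigma$ between projective covers which splits as a diagonal piece $\oplus_y\,\mathrm{id}_{Ae_y}\otimes h(y^-)$ plus a radical piece, and that the diagonal data assemble into $h\colon M\to M'$ with $\rho - Fh\in\mathrm{rad}(\mathrm{mod}\,A)$. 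Both work for reflecting isomorphisms, but the paper's version is what you actually need for preservation of indecomposability: from it, an idempotent $\rho\in\mathrm{End}(FM)$ gives an idempotent $h\in\mathrm{End}(M)$ with $\rho-Fh$ nilpotent, forcing $\rho\in\{0,\mathrm{id}\}$. Your phrase ``idempotents correspond via projective-cover rigidity'' gestures at this but does not supply the mechanism; you should make the $\rho\mapsto h$ correspondence with its radical remainder explicit.

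Part (iv) is where I see a genuine gap. Your plan is to compose the affine rank-$2$ bimodule of the embedding $\mathrm{rep}\,L_{n-1}\to\mathrm{rep}\,K_n$ with the rest and then invoke Lemma~3(ii),(iii) to conclude affineness and compute the rank as a product. This cannot work as stated, for two reasons. First, Lemma~3 concerns composable functors between \emph{module} categories, each given by tensoring with a bimodule over finite-dimensional or free algebras; the intermediate functors $\mathrm{rep}'K_n\to\mathrm{rep}'K\to\mathrm{mod}\,A$ have sources that are not module categories, so neither ``given by a bimodule'' nor ``rank'' makes sense there, and Lemma~3 does not apply. Second, even numerically the multiplicativity of rank is incompatible with the answer: the claimed rank is $\dim Ae_y-1$, which is odd whenever $\dim Ae_y$ is even, hence not of the form $2\cdot(\text{integer})$. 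The paper sidesteps both problems by writing down the bimodule for the composite $H$ directly: it is the cokernel of the bimodule map $f\colon S_x\otimes B\to Ae_y\otimes B$, $v_x\otimes 1\mapsto s(x,y,n)\otimes 1+\sum_{j=1}^{n-1}s(x,y,j)\otimes X_j$, and one checks that the residue classes of $p_i\otimes 1$, $i\neq n$, for a $k$-basis $p_1=s(x,y,1),\dots,p_n=s(x,y,n),p_{n+1},\dots,p_m$ of $Ae_y$, form an affine $B$-basis of the cokernel. You need this (or an equivalent) explicit construction to reach the stated rank.
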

\begin{proof} Of course, $F$ is $k$-linear. It is exact by the snake lemma.

Let $\rho: FM \longrightarrow FM'$ be a homomorphism. It can be lifted to 
a homomorphism $\sigma:P_{M} \longrightarrow P_{M'}$ between the projective covers and we also get an induced homomorphism $\tau:S_{M} \longrightarrow S_{M'}$ between the kernels. 
Now $Hom_{A}(Ae_{y},Ae_{y'})$ consists in right multiplications by elements of $e_{y}Ae_{y'}$. For $y\neq y'$ all these elements lie in the radical of $A$ and for $y=y'$ we can choose a basis
consisting of $e_{y}$ and elements in the radical. Therefore we obtain $ \sigma = \oplus_{y \in Q} \,(id_{Ae_{y}}\otimes h(y^{-}))  + \sigma '$, where 
$h(y^{-}):M(y^{-}) \longrightarrow M'(y^{-})$ is a 
uniquely determined linear map and $\sigma '$ is a bloc matrix of tensors $r \otimes s$ such that $r$ annihilates all $s(x,y,j)$. All these blocs are in the radical of the category $mod\,A$.
  Since $S_{M}$ and $S_{M'}$ are semisimple we have $\tau= \oplus_{x \,\in Q} \,id_{S_{x}}\otimes h(x^{+})$ for uniquely determined linear maps $h(x^{+}):M(x^{+}) \longrightarrow M'(x^{+})$.
 Calculating the image of
 any $v_{x}\otimes v \in S_{x}\otimes M(x^{+})$ under $\sigma \circ f_{M}=f_{M'} \circ \tau$ in two ways we see that the family of the $h(x^{+})$ and $h(y^{-})$ defines a morphism $h:M \longrightarrow M'$. 

If $\rho$ is an isomorphism then the lifting $\sigma$  is an isomorphism because $p_{M}$ and $p_{M'}$ are projective covers, whence $\tau$ is an isomorphism. From standard properties
 of the radical of $mod\,A$  it follows 
that $h$ is an isomorphism.

Similarly for an indecomposable $M$ any idempotent $\rho:FM \longrightarrow FM$ gives rise to an idempotent $h:M \longrightarrow M$ such that $\rho -Fh$ lies in the radical of $mod\,A$.
Thus for $h=0$ the idempotent $\rho$ lies in the radical whence $\rho=0$ and for $h=id_{M}$ the idempotent $\rho$ is invertible whence $\rho=id_{M}$.

Since the image of $f_{M}$ is contained in $TP_{M}$ we have $P_{M}/TP_{M} \simeq FM/TFM$ and so $FM$ lies in $C$. Reversely for any such module
 $X$ one looks at an exact sequence $0 \longrightarrow S \longrightarrow P \longrightarrow X \longrightarrow 0$ where $p:P \longrightarrow X$ is a projective cover. Then $i:S \longrightarrow P$
is given by an $f_{M}$ for some appropriate $M$.

Next, the simples  $S_{y^{-}}$ are mapped by $F$ to the projectives $Ae_{y}$ for all $y \in \,Q$. Thus these modules are pairwise orthogonal bricks if $F$ is strict and there are no 
arrows in $Q$. The reverse direction is even more trivial.

Finally we set $B:= k\langle X_{1},X_{2},\ldots ,X_{n-1}\rangle$. The full subquiver of $K$ supported by $x^{+}$ and $y^{-}$ is isomorphic to the Kronecker-quiver $K_{n}$ and so we obtain a 
representation embedding $H$ from $mod \,B$ to $mod\,A$ by composing the obvious embeddings $mod\,B \longrightarrow rep'K_{n}$ and $rep'K_{n} \longrightarrow rep'K$ with $F$.
It only remains to verify that $H$ is induced by tensoring with an affine bimodule $_{A}M_{B}$. So we consider the bimodule 
homomorphism
$f: S_{x}\otimes B  \longrightarrow  Ae_{y} \otimes B$ that sends $v_{x}\otimes 1$ to $s(x,y,n)\otimes 1  \,+ \, \sum_{j}^{n-1}\, s(x,y,j) \otimes X_{j} $. 
The cokernel is the wanted affine bimodule $_{A}M_{B}$.
Namely let $p_{1}=s(x,y,1),p_{2}=s(x,y,2),  \ldots, p_{n}=s(x,y,n),p_{n+1}, p_{m}$ be a $k$-basis of $Ae_{y}$. Then $_{A}M_{B}$ has the residue classes 
of the  $p_{i} \otimes 1$ with $i \neq n$ as a $B$-basis and $H$ is given by tensoring with $_{A}M_{B}$. If we have $ap_{i}= \sum_{j=1}^{m}\rho_{ji}p_{j}$ with $a$ in $A$ and $\rho_{ji}$ 
in $k$ then one has $a(p_{i}\otimes 1)= \sum_{j=1}^{m}\rho_{ji}(p_{j}\otimes 1)$ in $Ae_{y}\otimes B$ and in the cokernel one replaces $p_{n}\otimes 1$ by $-\sum_{j=1}^{n-1} p_{j}\otimes X_{j}$.
Thus the bimodule has an affine basis.

\end{proof}

We just have seen that representation embeddings occur implicitely already in the work of Jans; explicitely they appear 1969 in the short article \cite{GP} of Gelfand and Ponomarev
where they prove that the commutative algebra $k[X,Y]/(X,Y)^{3}$ is wild so that - somewhat surprisingly - the classification of pairs of commuting nilpotent matrices up 
to simultaneous similarity
is as complicated as the same problem for pairs of arbitrary matrices. Gelfand and Ponomarev describe an embedding of $mod\,k\langle X,Y \rangle$ into
$mod \,k[X,Y]/(X,Y)^{3}$ in the language of matrices. Their result is also a simple consequence of theorem 1.
\begin{corollary}
 For each $n\geq 2$ there is a representation embedding $$mod \,k\langle X_{1},X_{2}, \ldots ,X_{n}\rangle \longrightarrow mod\,k[X,Y]/(X,Y)^{n+1}$$ that is given by tensoring with an
affine bimodule.
\end{corollary}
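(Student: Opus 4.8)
The plan is to obtain the corollary as a direct application of part (iv) of Theorem 1 to a well-chosen presentation of $A:=k[X,Y]/(X,Y)^{n+1}$. First I would write $A$ as $kQ/I$, where $Q=L_{2}$ is the quiver with one point $\bullet$ and two loops, which I identify with $X$ and $Y$, and where $I$ is generated by the commutativity relation together with all paths of length $n+1$. Then the radical of $A$ is $\mathfrak{m}=(X,Y)/(X,Y)^{n+1}$, and I claim that the two-sided socle $T$ of $A$ — i.e.\ the annihilator of $\mathfrak{m}$, which here is a genuine two-sided ideal since $A$ is commutative — is exactly the span of the residue classes of the monomials of degree $n$. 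Indeed, if $a=\sum c_{ij}X^{i}Y^{j}$ then $Xa=0=Ya$ in $A$ forces $c_{ij}=0$ whenever $i+j\leq n-1$, so $\dim T=n+1$. Since monomials of degree $n$ lie in $\mathfrak{m}$ and multiplying them by $\mathfrak{m}$ lands in degree $\geq n+1$, hence in $0$, the ideal $T$ is contained in the radical and is annihilated on both sides by the radical, so $T$ satisfies all the hypotheses imposed before Theorem 1.

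Next, because $Q$ has the single point $\bullet$, the associated bipartite quiver $K=K(A,T)$ has exactly one emitter $\bullet^{+}$ and one receiver $\bullet^{-}$, with $n_{\bullet\bullet}=\dim e_{\bullet}Te_{\bullet}=\dim T=n+1$ arrows between them; in other words $K$ is the Kronecker quiver with $n+1$ arrows. Since $n+1\geq 2$, part (iv) of Theorem 1 applies with its integer "$n$" equal to $n+1$, and it produces a representation embedding
$$H:mod\,k\langle X_{1},X_{2},\ldots ,X_{(n+1)-1}\rangle = mod\,k\langle X_{1},X_{2},\ldots ,X_{n}\rangle \longrightarrow mod\,A$$
induced by tensoring with an affine bimodule, of rank $\dim Ae_{\bullet}-1=\dim A-1=\tfrac{(n+1)(n+2)}{2}-1$. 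This is precisely the assertion of the corollary.

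There is essentially no real obstacle here; the only points that require any care are the explicit computation of the socle of $k[X,Y]/(X,Y)^{n+1}$ and keeping the two uses of the letter "$n$" apart (the one in the statement of the corollary is one less than the one in Theorem 1(iv)). As a sanity check, for $n=2$ one recovers exactly the Gelfand--Ponomarev embedding $mod\,k\langle X,Y\rangle\longrightarrow mod\,k[X,Y]/(X,Y)^{3}$ discussed just before the corollary, now realized through a concrete affine bimodule.
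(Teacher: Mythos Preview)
Your proof is correct and follows exactly the paper's approach: the paper's one-line proof applies part iv) of Theorem 1 to $A=k[X,Y]/(X,Y)^{n+1}$ with the ideal $T=(X,Y)^{n}/(X,Y)^{n+1}$, which is precisely the ideal you identify as the socle and whose dimension $n+1$ yields the embedding from $mod\,k\langle X_{1},\ldots,X_{n}\rangle$. Your write-up simply makes explicit the verifications (that $T$ lies in the radical and is annihilated by it, and that $n_{\bullet\bullet}=n+1$) which the paper leaves to the reader.
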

\begin{proof}
 One applies part iv) of theorem 1 to the algebra $k[X,Y]/(X,Y)^{n+1}$ and the ideal $(X,Y)^{n}/(X,Y)^{n+1}$.
\end{proof}

Recall that a finite dimensional algebra $A$ is called distributive iff its two-sided ideals form a distributive lattice. Jans has shown in \cite{Jans} that this is true iff the lattice is 
finite and Kupisch has
 proven in \cite{Kup} that it is equivalent to the fact that for all primitive idempotents $e,f$ the rings $eAe$ are uniserial and the $eAf$ are uniserial as $eAe-fAf$-bimodules. In fact $eAf$
is then already uniserial from the right or from the left. These observations are important but easy to show. 

Part of the next corollary is already contained in the article of Jans.

\begin{corollary}
 If $A$ is basic and not distributive there is a representation embedding $rep'K_{2} \longrightarrow mod\,A$ whose composition with the embedding $mod\,k[T] \longrightarrow rep'K_{2}$
is given by the tensor-product with an affine bimodule.
\end{corollary}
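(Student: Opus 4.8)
The plan is to reduce everything, after a single surjection, to part iv) of theorem 1 with $n=2$. First I would manufacture the quotient algebra to which the theorem is applied. Since $A$ is not distributive, its lattice of two-sided ideals is not distributive; being the lattice of sub-bimodules of $_{A}A_{A}$ it is modular, and a modular non-distributive lattice contains a sublattice isomorphic to $M_{3}$. So there are two-sided ideals $P<I_{1},I_{2},I_{3}<Q$ with $I_{i}\cap I_{j}=P$ and $I_{i}+I_{j}=Q$ for all $i\neq j$. Put $\bar{A}:=A/P$, still a basic finite-dimensional algebra, and write $\bar{I}_{i}=I_{i}/P$ and $\bar{Q}=Q/P$. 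Then $\bar{I}_{1},\bar{I}_{2},\bar{I}_{3}$ are pairwise complementary sub-bimodules of $\bar{Q}$, so $\bar{Q}=\bar{I}_{1}\oplus\bar{I}_{2}$ with $\bar{I}_{3}$ a third complement, whence $\bar{I}_{1}\simeq\bar{I}_{2}$ (both nonzero, since $P<I_{1}$), say via $\phi:\bar{I}_{1}\simeq\bar{I}_{2}$. Choose a simple sub-bimodule $W\subseteq\bar{I}_{1}$ and set $\bar{T}:=W\oplus\phi(W)$; this is a sub-bimodule of $\bar{Q}\subseteq\bar{A}$, hence a two-sided ideal of $\bar{A}$. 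A simple $\bar{A}$-$\bar{A}$-bimodule is one-dimensional, supported at a single pair of points $(x,y)$ of the quiver of $\bar{A}$, and killed on both sides by the radical; hence $\bar{T}$ is two-dimensional with $\dim e_{x}\bar{T}e_{y}=2$ and $\mathrm{rad}(\bar{A})\cdot\bar{T}=\bar{T}\cdot\mathrm{rad}(\bar{A})=0$. That $\bar{T}$ lies in $\mathrm{rad}(\bar{A})$ is automatic when $x\neq y$, and when $x=y$ it holds because otherwise $W$ would contain a unit of the local ring $e_{x}\bar{A}e_{x}$, forcing $e_{x}\bar{A}e_{x}=k$ and $W=\phi(W)$, contradicting $W\cap\phi(W)=0$.

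Next I would apply theorem 1 to $\bar{A}$ together with the two-sided ideal $\bar{T}$, which lies in the radical and is annihilated on both sides by it. With the points $x,y$ found above one has $n:=n_{xy}=2$, so part iv) provides a representation embedding $H:mod\,k\langle X_{1}\rangle\longrightarrow mod\,\bar{A}$ induced by an affine bimodule. Moreover, by the very construction in that proof, $H=F\circ\iota_{K_{2}}\circ\iota$, where $K=K(\bar{A},\bar{T})$ is the associated bipartite quiver, $F:rep'K\longrightarrow mod\,\bar{A}$ is the representation embedding of part i), $\iota_{K_{2}}:rep'K_{2}\hookrightarrow rep'K$ is the inclusion of the full subcategory of representations supported on the two points $x^{+},y^{-}$, and $\iota:mod\,k[T]\longrightarrow rep'K_{2}$ is the standard embedding, where we identify $k[T]$ with $k\langle X_{1}\rangle$. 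The inclusion $\iota_{K_{2}}$ is a representation embedding because the only simple injective of $rep\,K$ supported on $\{x^{+},y^{-}\}$ is $S_{x^{+}}$, which is also the unique simple injective of $rep\,K_{2}$. Thus $G:=F\circ\iota_{K_{2}}:rep'K_{2}\longrightarrow mod\,\bar{A}$ is a representation embedding with $G\circ\iota=H$.

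Finally I would post-compose $G$ with the restriction functor $\mathrm{res}:mod\,\bar{A}\longrightarrow mod\,A$ attached to the surjection $A\to\bar{A}=A/P$; this is a strict representation embedding, induced by tensoring with the bimodule $_{A}\bar{A}_{\bar{A}}$, which is free of rank one as a right $\bar{A}$-module. Then $\mathrm{res}\circ G:rep'K_{2}\longrightarrow mod\,A$ is a representation embedding, and its composition with $\iota$ equals $\mathrm{res}\circ H$; by lemma 2 iii), applied to the affine bimodule defining $H$ and the free bimodule defining $\mathrm{res}$, this composite is induced by tensoring with an affine bimodule. That is precisely the assertion.

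The only step that is not bookkeeping with functors already built in section 3 and in lemma 2 is the first paragraph: the passage from non-distributivity to an $M_{3}$ in the modular ideal lattice, and thence, after dividing by $P$, to a two-sided ideal whose Jans quiver contains a double arrow while still satisfying the socle and radical hypotheses of theorem 1. One could instead begin from Kupisch's criterion and extract a two-dimensional semisimple layer of a non-uniserial $eAf$, but turning such a layer into an honest two-sided ideal again appears to require the same passage to a quotient.
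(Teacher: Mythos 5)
Your proof is correct, and the route to the two-dimensional ideal is genuinely different from the paper's. The paper invokes Kupisch's criterion, already quoted just before the corollary: non-distributivity yields primitive idempotents $e_{x},e_{y}$ with $e_{x}Ae_{y}$ not uniserial as an $e_{x}Ae_{x}$-$e_{y}Ae_{y}$-bimodule, hence some radical layer $R^{i}/R^{i+1}$ of dimension $\geq 2$; it then sets $J=(\mathrm{rad}\,A)R^{i}+R^{i}(\mathrm{rad}\,A)$ and works with the ideal generated by $R^{i}$ in $A/J$. You instead work purely lattice-theoretically: the modular lattice of two-sided ideals is non-distributive, so it contains an $M_{3}$, giving $P<I_{1},I_{2},I_{3}<Q$; after dividing by $P$ the three $\bar{I}_{j}$ become pairwise complements in $\bar{Q}$, so $\bar{I}_{1}\simeq\bar{I}_{2}$ as bimodules, and the sum of a simple sub-bimodule $W\subseteq\bar{I}_{1}$ with its image $\phi(W)$ is the desired two-dimensional ideal lying in the two-sided socle and radical. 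Both constructions then produce a quotient $B$ of $A$ whose Jans quiver satisfies $n_{xy}\geq 2$, after which the two proofs coincide exactly: unpack the construction inside Theorem 1 iv), note that extension by zero gives $rep'K_{2}\hookrightarrow rep'K$, compose with the restriction $mod\,B\to mod\,A$, and invoke Lemma 2 iii) for affineness. What your route buys is independence from Kupisch's element-theoretic characterization, replaced by the elementary $M_{3}$ criterion and the one-dimensionality of simple bimodules over a basic algebra over an algebraically closed field; the paper's route is shorter given that Kupisch's theorem is already in play. Your handling of the corner case $x=y$ (ruling out a unit in $W$) and the check that $\iota_{K_{2}}$ really lands in $rep'K$ are both done carefully; the only point you take tacitly is that $\bar{A}=A/P$ remains basic, which is of course automatic for quotients.
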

\begin{proof}
 There are two points $x,y$ in the quiver of $A$ such that $e_{x}Ae_{y}$ is not uniserial. Thus there is an index $i$ such that $ dim \,R^{i}/R^{i+1} \geq 2$. Here $R^{j}$ denotes the $j^{th}$ 
radical of the bimodule $e_{x}Ae_{y}$. Now let $T$ be the two-sided ideal generated by $R^{i}$ and let $J$ be $(rad A)R^{i}+R^{i}(rad A)$.
In the residue algebra $B=A/J$ the ideal
$T/J$ satisfies $n_{xy}\geq 2$. Thus we get the wanted representation embedding into $mod\, B$ that we can compose with the obvious embedding into $mod\,A$ which is given by tensoring
 with $B$. The result follows from lemma 2.
\end{proof}

As another consequence of theorem 1 we give a simple proof of the following well-known fact ( see \cite{SS,GTW} ).

\begin{proposition}
 If a finite-dimensional algebra $A$ is wild there is a representation embedding which is given by tensoring with an affine bimodule $_{A}M_{k<X,Y>}$.
\end{proposition}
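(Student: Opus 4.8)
The plan is to bootstrap from the hypothesis via known reduction theorems to a point where Theorem 1 (part iv) applies, and then to control the rank and affinity of the resulting bimodule using Lemma 2. First I would recall that "$A$ wild" means by definition that there is a representation embedding $k\langle X,Y\rangle \leq A$; the content of the proposition is that one can always arrange this embedding to be \emph{affine}, i.e. induced by tensoring with an affine bimodule $_{A}M_{k\langle X,Y\rangle}$. The key observation is that affine bimodules of the relevant shape are produced automatically by part iv) of Theorem 1 whenever we can locate two (not necessarily distinct) points $x,y$ in the quiver of some suitable quotient $B=A/J$ with $n_{xy}\geq 3$, where $T$ is an ideal in the socle-and-radical position required by the theorem. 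So the real task is a structural one: reduce the wildness of $A$ to the existence of such a configuration.

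The natural route is through the second Brauer--Thrall machinery and the classification of wild representation-infinite algebras. Concretely, I would argue that if $A$ is wild then $A$ is in particular representation-infinite, and then invoke the standard fact (Drozd's theorem, or the Jans--Gabriel analysis underlying Theorem 1) that $A$ admits a quotient $B$ together with an ideal $T$ in the two-sided socle/radical such that the associated bipartite quiver $K(B,T)$ has a vertex pair carrying a triple arrow, or more precisely such that $n_{xy}\geq 3$ for some $x,y$ — for otherwise every such quotient has all $n_{xy}\leq 2$, which (by Corollary 3 and the distributivity analysis of Jans--Kupisch quoted just above) forces $A$ into the tame/Kronecker regime and contradicts wildness. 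Once $n:=n_{xy}\geq 3$, part iv) of Theorem 1 hands us a representation embedding $H\colon mod\,k\langle X_{1},\dots,X_{n-1}\rangle \longrightarrow mod\,B$ induced by an affine bimodule of rank $\dim Be_{y}-1$, and since $n-1\geq 2$ we may precompose with the obvious affine embedding $mod\,k\langle X,Y\rangle \hookrightarrow mod\,k\langle X_{1},\dots,X_{n-1}\rangle$ (free of rank $1$, sending $X\mapsto X_{1}$, $Y\mapsto X_{2}$), which is trivially affine.

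It then remains to pass from $mod\,B$ back to $mod\,A$ and to check that affinity survives composition. The embedding $mod\,B\hookrightarrow mod\,A$ coming from the surjection $A\to B$ is restriction of scalars, induced by tensoring with the bimodule $_{A}B_{B}$, which is \emph{free} over $A$? — no, it is the other way: it is free, indeed the rank-one free module, over $B$ on the right. Here I would instead invoke part iii) of Lemma 2 in the correct order: with $A$ playing the role of "$k\langle X_{1},\dots\rangle$", the composite of an affine bimodule with a free one on the outside stays affine; so I compose $H$ (affine, over $k\langle X_{1},\dots,X_{n-1}\rangle$) with the free outer bimodule $_{A}B_{B}$ to land affinely in $mod\,A$, exactly as in the proof of Corollary 3. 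The main obstacle is the structural reduction step — producing the quotient $B$ and ideal $T$ with $n_{xy}\geq 3$ from bare wildness — which genuinely rests on the deep classification results rather than on elementary manipulation; everything after that is bookkeeping with Lemma 2. One should also double-check the boundary case where the two points $x,y$ coincide (a loop configuration), but Theorem 1 iv) is stated for "not necessarily distinct" points, so no separate argument is needed.
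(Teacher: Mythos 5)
Your proposal takes a genuinely different route from the paper, and the route has a fatal gap at exactly the step you flagged as "the main obstacle."

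The structural claim you lean on — that if every socle-in-radical ideal $T$ in every quotient $B=A/J$ yields $n_{xy}\leq 2$ for all pairs of points, then $A$ is "in the tame/Kronecker regime" — is false. Take $A=kQ$ for $Q$ any wild tree quiver (e.g.\ the one-point extension of $E_{8}$ by an extra arrow at the branch vertex). Since $Q$ is a tree without multiple arrows, there is at most one path between any two vertices, so $\dim e_{x}Ae_{y}\leq 1$ for all $x,y$; hence $A$ is distributive and every Jans quiver $K(B,T)$ attached to any quotient of $A$ has all $n_{xy}\leq 1$. Yet $A$ is wild. So there is no way to conjure the required $n_{xy}\geq 3$ configuration inside a quotient of $A$, and Corollary 3 / the Jans--Kupisch distributivity dichotomy cannot be pressed into service here; distributivity is orthogonal to the tame/wild divide. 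The statement "not distributive $\Rightarrow$ wild" holds (that is Corollary 2 plus Theorem 1 iv), but the converse direction you need is simply not true.

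The paper sidesteps this entirely by never trying to find a multiple arrow in a quotient of $A$ itself. It fixes $B=k\langle X,Y\rangle$ and $C=k[T_{1},T_{2}]/(T_{1},T_{2})^{3}$, composes the restriction $mod\,C\to mod\,B$ (along the surjection $B\twoheadrightarrow C$) with the wildness embedding $mod\,B\to mod\,A$ to get a representation embedding $F:mod\,C\to mod\,A$, and then uses Proposition 1 to note that the bimodule $_{A}M_{C}$ giving $F$ is projective over the finite-dimensional local algebra $C$, hence \emph{free}. The local triple-arrow configuration is then found not in a quotient of $A$ but in the fixed algebra $C$: Corollary 1 (Theorem 1 iv applied to $C$ with ideal $(T_{1},T_{2})^{2}/(T_{1},T_{2})^{3}$, where $n_{xx}=3$) gives an affine embedding $H:mod\,B\to mod\,C$. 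Finally, Lemma 2 iii (affine composed with free on the outside stays affine) gives that $F\circ H:mod\,k\langle X,Y\rangle\to mod\,A$ is affine. You already have all the pieces (Corollary 1, Lemma 2 iii, Proposition 1 and the local-implies-free observation); the missing idea is to route through the universal local wild algebra $C$ rather than hunting for a triple arrow inside $A$.
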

\begin{proof}
 Set $B=k<X,Y>$ and $C=k[T_{1},T_{2}]/(T_{1},T_{2})^{3}$. Since $B$ projects to $C$ and by assumption we have representation embeddings $mod \,C \longrightarrow mod\, B$ and
 $mod\,B \longrightarrow mod \,A$. By proposition 1 the composition $F$ is a representation embedding given by tensoring with a bimodule $_{A}M_{C}$ which is finitely 
generated projective  over the local algebra $C$ whence free because $C$ is the  only  indecomposable projective. By corollary 1 we have a representation embedding
 $H:mod  \,B \longrightarrow mod\,C$
which is given by tensoring with an affine 
bimodule $_{C}N_{B}$. Then  lemma 2 shows that $F \circ H$ is induced by an  affine bimodule.

\end{proof}

\subsection{A refinement of Brenners embedding}

In this section we  symmetrize a construction  due to Brenner  ( see  \cite{Brenner} ) which gives a strict representation embedding 
$F: mod\, A \longrightarrow mod \,k\langle X,Y \rangle$ for any finite finitely generated algebra $A$. 
The usual proof  consists in easy but lengthy matrix calculations ( see e.g. \cite[page 316]{SS}     ). Here we minimize these matrix calculations and we use  the simple 
 underlying shifting-structure 
which occurs already in the irreducible complex finite-dimensional representations of $sl_{2}({ \bf C})$.
We find as an additional property of $F$ that it induces an isomorphism between the lattices of submodules and therefore it preserves the length.

So let $n\geq 2$ be a natural number and let $A$ be the free associative algebra in $n(n+1)$ variables. To make the notations easier we denote these by $$X_{31},X_{41},X_{42},\ldots,
X_{n+2,1},X_{n+2,2},\ldots ,X_{n+2,n}$$ and by $$Y_{13},Y_{14},\ldots ,Y _{1,n+2},Y_{24},\ldots,Y_{2,n+2},\ldots , Y_{n,n+2}$$ with transposed indices. An $A$-module is nothing but
 a vector space $M$ endowed with endomorphisms $x_{ij},y_{ij}$ given by the action of the variables. The $k\langle X,Y \rangle$-module FM is defied as the vector space $M^{n+2}$. The action of $x$ is 
given by a strict lower triangular $(n+2)\times (n+2)$ bloc-matrix whose non-zero blocs are the identity at the positions $(2,1),(3,2),\ldots ,(n+2,n+1)$ and  $x_{ij}$ at the positions
 $(i,j)$. Symmetrically the action of $Y$ is given by the 'transposed' upper triangular bloc.matrix with the $y_{ij}$ as entries at the obvious positions. So for $n=2$ the shape of $x$ and 
$y$ is

 \[
\left [
\begin{array}{cccc}
0&0&0&0\\
1&0&0 & 0\\
x_{31}&1&0&0\\
x_{41}&x_{42}&1&0
\end{array}
\right ] and 
\left [
\begin{array}{cccc}
0&1&y_{13}&y_{14}\\
0&0&1 & y_{24}\\
0&0&0&1\\
0&0&0&0
\end{array}
\right ].
\]

The definition of $F$ on morphisms is the obvious diagonal one.

\begin{theorem} Let $F: mod\, A \longrightarrow mod \,k\langle X,Y \rangle$ be defined as above. Then we have:
\begin{enumerate}
 \item $F$ is $k$-linear, exact and fully faithful.
\item Any submodule of $FM$ is of the form $FM'$ for some submodule $M'$ of $M$. Thus $F$ induces an isomorphism between the lattices of submodules and  it preserves the length.
\end{enumerate}
\end{theorem}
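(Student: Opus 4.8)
The plan is to check the easy parts first and then isolate the real content, which is the statement about submodules. For part i), $k$-linearity is immediate from the diagonal definition on morphisms, and exactness follows because $FM = M^{n+2}$ as a vector space and the structure maps $x,y$ are built functorially out of the $x_{ij},y_{ij}$; a short exact sequence $0\to M'\to M\to M''\to 0$ is sent to the obvious componentwise sequence, which is exact. Faithfulness is clear since $Fg$ contains $g$ on each diagonal block. For fullness I would take a $k\langle X,Y\rangle$-homomorphism $\varphi:FM\to FM'$, write it as an $(n+2)\times(n+2)$ block matrix $(\varphi_{pq})$, and exploit the fact that $\varphi$ commutes with the shift part of $x$ (the identity blocks on the subdiagonal) and with the shift part of $y$ (the identity blocks on the superdiagonal). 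Commuting with the nilpotent lower shift forces $\varphi$ to be lower block-triangular with constant diagonal, i.e. $\varphi_{pq}=0$ for $p<q$ and $\varphi_{pp}=\varphi_{qq}$ for all $p,q$; commuting with the upper shift forces it to be upper block-triangular. Together these give $\varphi = \mathrm{diag}(h,h,\dots,h)$ for a single linear map $h:M\to M'$, and the remaining off-shift entries of the commutation relations ($x$-relations at positions $(i,j)$, $y$-relations likewise) then say exactly that $h$ intertwines every $x_{ij}$ and every $y_{ij}$, i.e. $h$ is an $A$-homomorphism with $Fh=\varphi$. This is the ``underlying shifting structure'' the author alludes to, modelled on the weight-string picture for $sl_2$: the two shift operators pin down the shape of any intertwiner.

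For part ii) I would argue as follows. Let $N\subseteq FM$ be a $k\langle X,Y\rangle$-submodule. Write $FM=M\oplus M\oplus\dots\oplus M$ ($n+2$ copies) and let $\pi_p$ be the projection onto the $p$-th copy; set $M'_p=\pi_p(N)$. The key observation is that the subdiagonal identity blocks of $x$ map the $p$-th copy isomorphically onto (a subspace inside) the $(p+1)$-st, so applying $x$ repeatedly one sees that $N$ surjects, in each coordinate, onto the \emph{same} subspace; similarly $y$ with its superdiagonal identities moves the top copy down. Chasing this I expect to get $M'_1=M'_2=\dots=M'_{n+2}=:M'$. Then one shows $M'$ is a sub-$A$-module of $M$: applying $x$ to an element of $N$ and reading off, say, the second coordinate shows $x_{21}(\text{something})=\dots$ gives $M'$ stable under each $x_{ij}$, and symmetrically under each $y_{ij}$. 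Finally, and this is the step where one must be slightly careful, one shows $N=FM'=(M')^{n+2}$, i.e. $N$ is not some proper ``skew'' subspace projecting onto $M'$ in every coordinate but genuinely the full product. Here again the shift operators are decisive: given $(m_1,\dots,m_{n+2})\in N$ with $m_p\in M'$ arbitrary in the $p$-th slot, one uses that $x$ shifts slot $p$ into slot $p+1$ (plus lower-order corrections) and $y$ shifts downward, so that by suitable $k\langle X,Y\rangle$-combinations one can ``spread'' a single nonzero coordinate across all coordinates and, conversely, isolate coordinates; a dimension count or an explicit triangular induction then gives $\dim N=(n+2)\dim M'$, forcing $N=(M')^{n+2}=FM'$. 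Because $F$ is injective on objects (it multiplies dimension by $n+2$ and is functorial), the assignment $M'\mapsto FM'$ is then a bijection between submodules of $M$ and submodules of $FM$, manifestly inclusion-preserving in both directions, hence a lattice isomorphism; equality of lengths is immediate since composition series correspond.

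The main obstacle I expect is the last step of part ii): ruling out ``diagonal'' or ``graph-type'' submodules $N$ that project onto $M'$ in each coordinate without equalling $(M')^{n+2}$. Everything else is bookkeeping with block-triangular matrices, but this step genuinely uses the interplay of the \emph{two} shift directions (lower shift from $x$, upper shift from $y$): with only one shift one could build proper subspaces (e.g. a single off-diagonal copy of $M'$), so one has to combine the $x$-action and the $y$-action to force fullness. I would handle it by an explicit induction on coordinates — first show $N$ contains every vector supported in the top coordinate and lying in $M'$ there (using powers of $y$ to clear lower coordinates and powers of $x$ to refill), then translate down — and by tracking dimensions to confirm nothing is lost. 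If that direct argument gets unwieldy, an alternative is to note that $N$, as a submodule, must itself be stable under the idempotents/Peirce-type decomposition coming from the $sl_2$-like grading, which already forces $N$ to be homogeneous, i.e. a product.
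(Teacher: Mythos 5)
Your plan follows the paper's proof essentially step by step: part i) via the observation that $x$ and $y$ are (block-)nilpotent shifts, so commutation forces a homomorphism $\varphi: FM\to FM'$ to be simultaneously lower and upper block-triangular with constant diagonal block $h$, and the remaining off-shift entries of $\varphi x=x\varphi$, $\varphi y=y\varphi$ say exactly that $h$ intertwines all $x_{ij}$ and $y_{ij}$; and part ii) by setting $M'=\pi_1 N$ and using the two shifts to establish, by the triangular induction you sketch, both that all projections $\pi_i N$ equal $M'$ and that $N$ contains each slice $0^{i-1}\times M'\times 0^{n+2-i}$ (the base case being $y^{n+1}x^{n+1}N=M'\times 0^{n+1}\subseteq N$), whence $N=(M')^{n+2}$, and then reading off stability of $M'$ under the $x_{ij}$ and $y_{ij}$ from $x(0^{i}\times M'\times 0^{n+1-i})\subseteq (M')^{n+2}$ and its $y$-analogue. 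This matches the paper's argument. One small caveat: the fallback you mention at the very end, about $N$ being homogeneous for a ``Peirce-type decomposition,'' would not work here --- the subalgebra of $\mathrm{End}(FM)$ generated by the two nilpotent operators $x,y$ contains no nontrivial idempotents to decompose against, so the explicit shift induction really is needed; but since your primary argument is the correct one, this does not affect the proposal.
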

\begin{proof}
 The elements of $M^{n+2}$ are columns with entries in $M$. However, to save space, we write these as rows. In the same vein a morphism from $FM$ to $FN$ is written as a bloc matrix 
with entries in $Hom_{k}(M,N)$. The point is that $x$ is just a shift operator given by $$(0,0, \ldots ,0,m,\star,\ldots ,\star) \mapsto  (0,0, \ldots ,0,0,m,\star,\ldots ,\star).$$
Here $m$ is an arbitrary element in $M$ at the position $i$ in the first vector and the position $i+1$ in the second. The stars in the first row are arbitrary elements in $M$ and the 
stars in the second row are obtained from these by matrix multiplication involving the $x_{ij}$ which is lengthy to write down and unimportant for most of the arguments. The situation is
symmetric for $y$.

In part i) only the surjectivity of $F$ on the homomorphism spaces is not obvious. However using the shift-operation several times one sees easily for $i=n+1,n,n-1, \ldots,1$ that 
$x^{i}FM= 0^{i}\times M^{n+2-i}$ consists of all columns having $0$ at the first $i$ positions. Of course $FN$ has a similar filtration obtained by the action of powers of $x$. Any 
homomorphism $\alpha:FM \longrightarrow FN$ has to respect these filtrations i.e. $\alpha$ is a lower triangular bloc matrix. By symmetry, $\alpha$ is also an upper triangular bloc matrix 
whence
 a diagonal bloc-matrix. Again by looking at the shift operation of $x$ one gets that all diagonal entries are equal to the same linear map $\beta:M \longrightarrow N$. Then $x\alpha=\alpha x$ resp. $y\alpha =\alpha y$ just means that $\beta$ commutes with all $x_{ij}$ resp. $y_{ij}$, i.e. $\beta$ is the wanted homomorphism with 
$F\beta=\alpha$.

In part ii) let $U$ be a submodule of $FM$ and let $\pi_{i}:M^{n+2} \longrightarrow M$ be the projection onto the $i$-th component. We define $M'= \pi_{1}U$.
First we use both shift operations to prove by induction $M'= \pi_{i}U$ and $0^{i-1}\times M' \times 0^{n+2-i} \subseteq U$. For $i=1$ we have $y^{n+1}x^{n+1}U = M' \times 0^{n+1}$. 
This belongs to $U$ because $U$ is a submodule. In the induction step we set $M''=\pi_{i+1}(U)$. The inclusion $M'\subseteq M''$ follows by applying $x$ once to
 $0^{i-1}\times M' \times 0^{n+2-i} \subseteq U$. For any $u=(m_{1},m_{2},\ldots, m_{i},m_{i+1},\ldots )$ also $u'=(0,0,\ldots,0,m_{i+1}, \ldots )$ belongs to $U$ by the inductive hypothesis.
Therefore $x^{n+1-i}u'= (0,0,\ldots,0,m_{i+1})$, $y^{n+1}x^{n+1-i}u'=(m_{i+1},0,0,\ldots, 0)$ and $y^{n+1-i}x^{n+1-i}u'=(\star,\star,\ldots,m_{i+1},0,\ldots,0)$ belong to $U$. Thus
 $m_{i+1}\in M'$ and $0^{i}\times M' \times 0^{n+1-i}\subseteq U$ follow.

We have shown that $U=M'^{n+2}$ only by using the shift-operations. To see that $M'$ is stable under all $x_{ij}$ one writes out for arbitrary $i$ the condition
 $x(0^{i}\times M' \times 0^{n+1-i}) \subseteq M'^{n+2}$. Symmetrically $M'$ is stable under all $y_{ij}$ i.e. a submodule. Finally we have $FM'=U$. Therefore $F$ induces a bijection
 between the lattices of submodules which is even an isomorphism.

\end{proof}

The functor induces morphisms at the geometric level between  the corresponding varieties  of 
representations, but these do not have good properties. This drawback is removed  in \cite[page17]{BoGeo} by yet another variant of the 
above construction which shows for instance that any singularity occurring in an orbit closure
 of a module over any algebra occurs already for a $k\langle X,Y \rangle$-module.

\subsection{Representation embeddings via extensions}

 Let $A$ be an arbitrary associative algebra and let $U,V$ be two indecomposable modules of finite dimension. Recall that $Ext_{A}(V,U)$ can be described as the quotient of the space of 
all derivations 
$$Z(V,U)= \{z:A \rightarrow Hom_{k}(V,U) \mid \,z(ab)= u(a)z(b)+z(a)v(b)\}$$ by the space 
$$B(V,U)=\{z \in Z(V,U) \mid \,z(a)= u(a)h-hv(a) \mbox{for some }h \in Hom_{k}(V,U) \}$$
 of all inner derivations. Suppose we have $n$ derivations $z_{1},z_{2},\ldots ,z_{n}$ whose residue classes are linearly independent in $Ext_{A}(V,U)/J$ where $J$ is the radical of the  
$End_{A}(U)-End_{A}(V)$-bimodule $Ext_{A}(V,U)$. In the following  all tensor products are taken over $k$. Furthermore the natural isomorphism 
$Hom_{A}(V\otimes X,U \otimes Y) \simeq Hom_{A}(V,U) \otimes Hom_{k}(X,Y)$  carries over to the bifunctors $Z,B,Ext$ and we view these as identifications.

Now we define a  functor $F: rep \,K_{n}  \rightarrow mod \,A$ by sending a representation $W=(W_{1},W_{2};\phi_{1},\phi_{2},\ldots \phi_{n})$ to
the vector space $FW=(U\otimes W_{2})\oplus (V\otimes W_{1})$.  Writing elements of $End_{k}FW$  as 
$2\times2$-bloc matrices with entries in $Hom_{k}(U,U) \otimes Hom_{k}(W_{2},W_{2})$ and so on  
the $A$-module structure on $FW$ 
is given by 
$$ a \mapsto 
\left [
\begin{array}{cc} 
u(a)\otimes id_{W_{2}}&\sum_{i=1}^{n} \,z_{i}(a)\otimes \phi_{i}\\
0&v(a) \otimes id_{W_{1}}
\end{array}
\right ]. 
$$

 One checks that this map is an algebra-homomorphism and one defines $F$ on morphisms in the obvious way by a $2\times 2$-diagonal matrix to obtain a $k$-linear exact functor. 
From now on we suppress the variable $a$ in our calculations.

\begin{theorem}
 We keep all  assumptions and notations and we assume in addition that $Hom_{A}(U,V)=0$. Then we have:
\begin{enumerate}
 \item  $F$ is a representation embedding.
\item $F$ is full iff $U$ and $V$ are orthogonal bricks.
\item For $n\geq 2$ the composition of $F$ with the representation embedding from section  2.1  is induced by an affine bimodule of rank $ dim (U\oplus V)$.
\end{enumerate}

\end{theorem}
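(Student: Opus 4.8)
The plan is to verify the three properties of a representation embedding separately, using the hypothesis $\mathrm{Hom}_A(U,V)=0$ to control the block structure of homomorphisms. First I would note that exactness and $k$-linearity are immediate from the construction (the functor is the ``obvious diagonal'' one on morphisms, so it preserves kernels and cokernels blockwise), so the real content of i) is preservation and reflection of indecomposables. For this I would compute $\mathrm{Hom}_A(FW,FW')$ for two representations $W,W'$ of $K_n$. Writing an endomorphism $\alpha$ of $FW$ as a $2\times 2$ block matrix, the condition that $\alpha$ be $A$-linear splits, by the triangular shape of the action, into: the off-diagonal lower-left block lands in $\mathrm{Hom}_A(U\otimes W_2, V\otimes W_1)=\mathrm{Hom}_A(U,V)\otimes\mathrm{Hom}_k(W_2,W_1)=0$ (here the hypothesis enters), so $\alpha$ is upper triangular; the two diagonal blocks must be $A$-linear, hence lie in $\mathrm{End}_A(U)\otimes\mathrm{End}_k(W_2)$ and $\mathrm{End}_A(V)\otimes\mathrm{End}_k(W_1)$ respectively; and the upper-right block must intertwine the extension data, i.e.\ give a morphism of the extensions encoded by the $z_i$. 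The key point, exactly as in the Brenner argument, is that the linear independence of the classes of the $z_i$ in $\mathrm{Ext}_A(V,U)/J$ forces the diagonal blocks to be \emph{scalar} modulo radical — more precisely, modulo the radicals of $\mathrm{End}_A(U)$ and $\mathrm{End}_A(V)$ the two diagonal blocks are equal and are given by a morphism $W\to W'$ of representations of $K_n$, with the correction terms landing in the radical of $\mathrm{End}_A(FW)$.

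From this description both halves of i) follow by the standard radical argument already used twice in the paper: if $W$ is indecomposable, any idempotent endomorphism of $FW$ reduces modulo radical to an idempotent of $\mathrm{End}_{K_n}(W)$, which is $0$ or $1$, and then lifting shows the idempotent of $FW$ is $0$ or $1$, so $FW$ is indecomposable; similarly an isomorphism $FW\cong FW'$ produces, modulo radical, an isomorphism $W\cong W'$, giving reflection of isomorphism classes. Then Lemma 1 (or rather the remark that reflecting isomorphism classes of indecomposables suffices) completes i). For ii), $F$ is full precisely when the radical corrections vanish identically, i.e.\ when $\mathrm{End}_A(U)$, $\mathrm{End}_A(V)$ have trivial radical and there are no nonzero morphisms between them in either direction; together with $\mathrm{Hom}_A(U,V)=0$ this says exactly that $U$ and $V$ are orthogonal bricks, and conversely when $U,V$ are orthogonal bricks the displayed description shows $\mathrm{Hom}_A(FW,FW')\cong\mathrm{Hom}_{K_n}(W,W')$, so $F$ is full.

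For iii) I would combine $F$ with the strict representation embedding $\mathrm{rep}\,K_n\to\mathrm{rep}\,K_{n+1}$, equivalently $\mathrm{mod}\,k\langle X_1,\dots,X_{n-1}\rangle\to\mathrm{rep}\,K_n$, from section 2.1, which by the remark there is induced by an affine bimodule of small rank; by Lemma 2 part iii) the composite is induced by an affine bimodule provided $F$ itself is induced by a bimodule, free over the source, and the ranks multiply correctly. So the task is to exhibit $FW=(U\otimes W_2)\oplus(V\otimes W_1)$ as ${}_A M\otimes W$ for an explicit $A$-$kK_n$-bimodule $M$: one takes $M$ to be $(U\otimes P_b)\oplus(V\otimes P_a)$ where $P_a,P_b$ are the indecomposable projectives of $kK_n$ at the two vertices, with the $A$-action given by the same triangular formula with the $\phi_i$ replaced by the $n$ arrows, and then $M$ is free over $kK_n$ — no, over the source free algebra — of rank $\dim U+\dim V=\dim(U\oplus V)$; affineness comes because each $z_i$ is a derivation, so the correction term $z_i(a)\otimes(\text{arrow})$ raises the ``arrow-degree'' by exactly one, which is precisely the degree-$\le 1$ condition in the definition of an affine basis. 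I expect the main obstacle to be the bookkeeping in iii): one has to check carefully that the natural $kK_n$-basis of $M$ really is an affine $k\langle X_1,\dots\rangle$-basis once the composition with the section-2.1 functor is carried out, i.e.\ that no higher powers of the variables appear — this is where the derivation identity $z(ab)=u(a)z(b)+z(a)v(b)$ does the real work, and one should present that computation rather than wave at it.
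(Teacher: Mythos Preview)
Your approach to parts i) and ii) is essentially the paper's: block-triangular analysis of $\alpha\in\mathrm{Hom}_A(FW,FW')$, the hypothesis $\mathrm{Hom}_A(U,V)=0$ killing the lower-left block, and projection of the upper-right relation to $(\mathrm{Ext}(V,U)/J)\otimes\mathrm{Hom}_k(W_1,W'_2)$ to extract a $K_n$-morphism $(P_0,S_0)$. Two small points of precision. First, the correction $\alpha-F(P_0,S_0)$ is \emph{nilpotent} (strictly upper triangular with diagonal blocks in $(\mathrm{rad}\,\mathrm{End}\,U)\otimes\ldots$ resp.\ $(\mathrm{rad}\,\mathrm{End}\,V)\otimes\ldots$), which is what the idempotent argument actually needs; calling it ``in the radical of $\mathrm{End}_A(FW)$'' presupposes $FW$ is local, which is what you are proving. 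Second, in the fullness direction of ii) you must still dispose of the block $Q$: once $p=s=0$ and $(P_0,S_0)$ is a morphism, the remaining equation reads $\sum_{i,j}(Q'_{ij}v-uQ'_{ij})\otimes Q_{ij}=0$ in $Z(V,U)\otimes\mathrm{Hom}_k(W_1,W'_2)$, whence each $Q'_{ij}\in\mathrm{Hom}_A(V,U)=0$; this step is not automatic from ``radical corrections vanish''.

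For iii) the paper takes a more direct route than your detour through an $A$--$kK_n$ bimodule (which, as you noticed mid-sentence, is not free over $kK_n$ unless $\dim U=\dim V$). One simply writes down the composite bimodule over $B=k\langle X_1,\ldots,X_{n-1}\rangle$ directly: $_AM_B=(U\otimes B)\oplus(V\otimes B)$ with $a\in A$ acting by the matrix $\left[\begin{smallmatrix}u(a)\otimes 1 & \sum_{i=1}^{n-1}z_i(a)\otimes X_i+z_n(a)\otimes 1\\ 0 & v(a)\otimes 1\end{smallmatrix}\right]$. A $k$-basis of $U\oplus V$ tensored with $1$ is then visibly an affine $B$-basis of rank $\dim(U\oplus V)$: the entries are degree $\leq 1$ in the $X_i$ by inspection. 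The derivation identity is what makes this an algebra homomorphism (hence a module structure), not what produces affineness; your last paragraph conflates these two roles.
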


\begin{proof} This is just a lengthy verification using matrix calculations. Nevertheless we give full details.
 We write a homomorphism $\alpha$ from $FW$ to $FW'$ as a $2\times 2$ bloc-matrix $
\left [
\begin{array}{cc}
P&Q\\
R&S
\end{array}
\right ]. 
$ Then we have the matrix equation  $$\left [
\begin{array}{cc}
P&Q\\
R&S
\end{array}
\right ]\left [
\begin{array}{cc}
u\otimes id_{W_{2}}&\sum_{i=1}^{n} \,z_{i}\otimes \phi_{i}\\
0&v \otimes id_{W_{1}}
\end{array}
\right ]=$$ $$
\left [
\begin{array}{cc}
u\otimes id_{W'_{2}}&\sum_{i=1}^{n} \,z_{i}\otimes \phi'_{i}\\
0&v \otimes id_{W'_{1}}
\end{array}
\right ]
\left [
\begin{array}{cc}
P&Q\\
R&S
\end{array}
\right ].
$$ 

It follows that $R \in Hom_{A}(U,V)\otimes Hom_{k}(W_{2}, W'_{1})$ whence $R=0$. Next we find $P \in End_{A}(U) \otimes Hom_{k}(W_{2},W'_{2})$.
 We take a basis $\pi_{1},
\pi_{2},\ldots \pi_{p}$ of the radical of $End_{A}U$ and we set $\pi_{0}=id_{U}$. Then we get $P=\sum_{i=0}^{p} \,\pi_{i}\otimes P_{i}$ with uniquely determined 
$P_{i} \in Hom_{k}(W_{2},W'_{2})$.
The case $p=0$ can occur here.

 Similarly we write $S =\sum_{i=0}^{s} \,\sigma_{i}\otimes S_{i}$ with uniquely determined maps
$S_{i}$ in $Hom_{k}(W_{1},W'_{1})$.  Finally we choose any basis $Q_{ij}$ of $Hom_{k}(W_{1},W'_{2})$ and we write $Q=\sum _{i,j} Q'_{ij}\otimes Q_{ij}$ with uniquely determined
 $Q'_{ij} \in Hom_{k}(V,U)$. We compare the entries in the right upper corner of the matrix equation and we find in $Z(V,U)\otimes Hom_{k}(W_{1},W'_{2})$ 
the equation:
$$0=\sum_{i=1}^{n} z_{i}\otimes (P_{0}\phi_{i}-\phi'_{i}S_{0}) \,+ \sum _{i,j}(Q'_{ij}v-uQ'_{ij})\otimes Q_{ij} +$$
 $$\sum_{i=1}^{p} \sum _{j=1}^{n}\pi_{i}z_{j}\otimes P_{i}\phi_{j}  - \sum_{i=1}^{s} \sum _{j=1}^{n}z_{j}\sigma_{i}\otimes \phi'_{j}S_{j}.$$

Projecting this to $(Ext(V,U)/J)\otimes Hom(W_{1},W'_{2})$ we obtain that $(P_{0},S_{0})$ belongs to $Hom_{kK_{n}}(W,W')$. 

Suppose now that $\alpha$ has an inverse $\alpha'$. Then we find as above a homomorphism $(P'_{0},S'_{0})$ which is the inverse of $(P_{0},S_{0})$.
Next assume $W$ is indecomposable and that  $\alpha$ is an idempotent in $End_{A}FW$. Then $(P_{0},S_{0})$ is an idempotent in $End_{kK_{n}}(W)$ whence $0$ or the identity.
Now $\alpha - F(P_{0},S_{0})$ is nilpotent and so $\alpha$ is the identity or zero.

If $F$ is in addition full then $U$ and $V$ are orthogonal bricks because they are the images of the simples under $F$. Conversely if $U$ and $V$ are orthogonal bricks  we have $p=s=0$ and
$Q'_{ij} \in Hom_{A}(V,U)=0$ for all $i,j$. It follows that $F$ is full.

Abbreviate $k\langle X_{1},X_{2},\ldots X_{n-1}\rangle $ by $B$. The $A$-module structure on the bimodule $_{A}M_{B}= (U\otimes B) \oplus (V\otimes B)$ is given by the map 

$$ a \mapsto 
\left [
\begin{array}{cc} 
u(a)\otimes id_{W_{2}}& \sum_{i=1}^{n-1} \,z_{i}(a)\otimes X_{i} + z_{n}(a)\otimes 1\\
0&v(a) \otimes id_{W_{1}}
\end{array}
\right ]. $$

\end{proof}
The case of two orthogonal bricks was of course known before and only this case is used in the next proof. Later on in section 6.4 we need the general result.
\begin{proposition}
 Let $R$ be a quiver with two orthogonal bricks $U,V$ having the same dimension vector $\alpha$. Suppose $m:=-q_{R}(\alpha)$ is $\geq 1$. Then there is a strict representation embedding
$F: rep\,K_{m} \longrightarrow rep\,R$ such that the dimension vectors of all $FX$ are integer multiples of $\alpha$.
\end{proposition}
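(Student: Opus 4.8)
The plan is to apply Theorem 4 with $A = kR$, the two given orthogonal bricks $U,V$, and $n = m$. The first thing to check is that the hypothesis $\operatorname{Hom}_A(U,V)=0$ of Theorem 4 is satisfied: this is immediate since $U,V$ are orthogonal bricks by assumption. The second thing to check is that we genuinely have $m$ derivations $z_1,\dots,z_m$ whose residue classes are linearly independent in $\operatorname{Ext}_A(V,U)/J$. Since $U,V$ are orthogonal bricks, $\operatorname{End}_A(U)=\operatorname{End}_A(V)=k$, so the bimodule $\operatorname{Ext}_A(V,U)$ has trivial radical $J=0$, and the condition reduces to $\dim_k \operatorname{Ext}_A(V,U)\ge m$. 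This is exactly where the Euler form enters: for representations of a quiver one has $\dim\operatorname{Hom}_A(V,U) - \dim\operatorname{Ext}_A(V,U) = \langle \underline{\dim}V, \underline{\dim}U\rangle = q_R(\alpha)$ (using that $\underline{\dim}U=\underline{\dim}V=\alpha$ and that the symmetrization of the Euler form is $q_R$, together with $\operatorname{Hom}_A(U,V)=0$ forcing by duality considerations... ) — more carefully, since $\operatorname{Hom}(U,V)=0$ and $U,V$ are bricks, $\dim\operatorname{Hom}(V,U)\in\{0,1\}$, so $\dim\operatorname{Ext}(V,U) = \dim\operatorname{Hom}(V,U) - q_R(\alpha) \ge -q_R(\alpha) = m$. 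Hence we may pick $m$ such derivations and Theorem 4 gives a representation embedding $F:\operatorname{rep} K_m \to \operatorname{rep} R$.

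Next I would verify the two extra claims. For fullness: Theorem 4(ii) says $F$ is full precisely when $U$ and $V$ are orthogonal bricks, which is our hypothesis, so $F$ is a strict representation embedding. For the dimension vectors: by construction $FW = (U\otimes W_2)\oplus(V\otimes W_1)$ as a vector space, with the $R$-action respecting the grading coming from $U$ and $V$. Evaluating at a vertex $i$ of $R$, we get $(FW)_i = U_i\otimes W_2 \oplus V_i\otimes W_1$, so $\underline{\dim}FW = (\dim W_2)\,\underline{\dim}U + (\dim W_1)\,\underline{\dim}V = (\dim W_1 + \dim W_2)\,\alpha$, an integer multiple of $\alpha$ as required.

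The only genuinely delicate point is the homological identity relating $\dim\operatorname{Ext}_A(V,U)$ to the Euler form $q_R$. Here $A=kR$ is hereditary, so the projective resolution of $V$ has length at most one and the Euler characteristic $\langle \underline{\dim}V,\underline{\dim}U\rangle := \sum_{i}(\dim V_i)(\dim U_i) - \sum_{\text{arrows }\beta:i\to j}(\dim V_i)(\dim U_j)$ equals $\dim\operatorname{Hom}_A(V,U) - \dim\operatorname{Ext}^1_A(V,U)$; and since $q_R(\alpha) = \langle\alpha,\alpha\rangle$ is the associated quadratic form, the bound $\dim\operatorname{Ext}_A(V,U)\ge -q_R(\alpha)=m$ follows once we know $\dim\operatorname{Hom}_A(V,U)\ge 0$, which is trivial (and in fact $q_R(\alpha)$ is computed with respect to $\underline{\dim}U = \underline{\dim}V = \alpha$ so the asymmetry of the Euler form is harmless). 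If $R$ has oriented cycles and $kR$ is not hereditary one instead works inside the exact subcategory and uses that only the first Ext group is relevant for the construction in Theorem 4; in all cases one has enough linearly independent derivations. I expect this homological bookkeeping to be the main obstacle; everything else is a direct appeal to Theorem 4 and an inspection of the dimension vectors.
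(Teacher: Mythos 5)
Your proof is correct and follows exactly the same route as the paper: apply the Euler-form identity for quiver representations to bound $\dim\operatorname{Ext}(V,U)$ from below by $m$, note that $J=0$ since $U,V$ are bricks, and then invoke the theorem on embeddings via extensions together with its fullness criterion. One small inaccuracy: orthogonality of the bricks gives $\operatorname{Hom}(V,U)=0$ outright (not merely $\dim\operatorname{Hom}(V,U)\in\{0,1\}$, which does not in fact follow from the reasons you give), so $\dim\operatorname{Ext}(V,U)=m$ exactly; and the worry about $kR$ failing to be hereditary is moot, since the Euler-form identity $\langle\underline{\dim}V,\underline{\dim}U\rangle=\dim\operatorname{Hom}(V,U)-\dim\operatorname{Ext}^1(V,U)$ holds for all finite-dimensional representations of any quiver.
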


\begin{proof}The well-known equality $ q_{R}(\alpha) = dim\, Hom (V,U) - dim Ext(V,U)$ shows $dim\,Ext(V,U)=m$. By the last theorem the wanted $F$ exists.
 
\end{proof}

 Peternell, one of my students, has elaborated the idea of theorem 3 in a much more general setting   in  \cite{P} and Weist has adapted this to his 
purposes in \cite{W}. 

\section{Simultaneous orthogonal strict embeddings into the representation category of a wild quiver}

\subsection{The main result}
 For any wild quiver $Q$ and any finitely generated algebra $A$ there is a strict representation embedding $mod \,A \longrightarrow rep \,Q$ as is well-known. 
To prove this it is by Brenners theorem sufficient  to construct explicitely in terms of matrices  a strict embedding of $rep L_{2}$ into $rep \,Q$ for the easily 
determined finite list of all 
minimal wild quivers $Q$.

That way the 
the result was found and proven for the first time ( see \cite{Brenner,GRI} ).       
In the book   \cite{SS}  there is another proof using Brenners embedding and an inductive argument based on perpendicular categories.
Here we present a souped-up version of the theorem that requires only very few explicit matrix 
calculations and in particular not Brenners embedding.

\begin{theorem} Let $Q$ be a wild quiver and let $(A_{i})_{i \in I}$ be a family of finitely generated algebras where the index set $I$ has at most the cardinality of $k$. 
Then there are strict representation embeddings
 $$ F_{i}: mod\,A_{i} \longrightarrow rep\,Q$$ with the following properties:
\begin{enumerate}
 \item The embeddings are  pairwise orthogonal i.e.  $Hom(F_{i}(X),,F_{j}(Y))=0$ for $i\neq j$ and arbitrary $X \in mod\,A_{i}$, $Y \in mod \,A_{j}$.
\item There is a root $\alpha$ of $Q$ with $1 \leq p:= -q_{Q}(\alpha)$ such that the dimension vectors of all $F_{i}(X)$ are multiples of $\alpha$ and such that for all $n$ the unions of 
the orbits of all $F_{i}X$ with dimension-vector $n\alpha$ lie in a proper closed subset of the variety $rep(Q,n\alpha)$.

\end{enumerate}
\end{theorem}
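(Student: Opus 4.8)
The plan is to manufacture all of the $F_{i}$ from one fixed, explicit $|k|$-parameter family of pairwise orthogonal bricks inside $rep\,Q$ and then to feed that family into Proposition 3. It is classical (\cite{Brenner,GRI}) that every wild quiver admits a strict, hence fully faithful, representation embedding $\Psi:rep\,K_{3}\longrightarrow rep\,Q$. Inside $rep\,K_{3}$ I would take the two-dimensional representations $M_{t}=(k^{2},k^{2};\,id,\,E_{12},\,tE_{21})$ for $t\in k^{*}$, with $E_{12},E_{21}$ the standard nilpotent $2\times 2$ matrices. A two-line matrix check gives $End_{K_{3}}(M_{t})=k$ and $Hom_{K_{3}}(M_{s},M_{t})=0$ for $s\neq t$; moreover each $M_{t}$ is, uniformly in $t$, the cokernel of a monomorphism $S_{b}^{4}\hookrightarrow P_{a}^{2}$ (with $P_{a}$ the projective at the source and $S_{b}$ the simple at the sink of $K_{3}$) depending polynomially on $t$, so, $\Psi$ being exact and $k$-linear, the modules $\Psi(M_{t})$ form an algebraic $|k|$-parameter family of pairwise orthogonal bricks, all of a single dimension vector $\alpha$. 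Since $\Psi(M_{t})$ is indecomposable, $\alpha$ is a positive root of $Q$; and since $\Psi$ is injective on self-extensions (Lemma 1), $p:=-q_{Q}(\alpha)=dim\,Ext_{Q}(\Psi M_{t},\Psi M_{t})-1\ge dim\,Ext_{K_{3}}(M_{t},M_{t})-1=(1-q_{K_{3}}(2,2))-1=4$, in particular $p\ge 3$.

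Now I would assemble the embeddings. Since $|I|\le|k|$ and $k$ is infinite, elementary set theory yields $2|I|$ pairwise distinct parameters, hence for each $i$ a pair $U_{i},V_{i}$ drawn from the family, with the four bricks $U_{i},V_{i},U_{j},V_{j}$ pairwise orthogonal whenever $i\neq j$. Proposition 3, applied to $Q$, the root $\alpha$ and the orthogonal bricks $U_{i},V_{i}$ (legitimate as $-q_{Q}(\alpha)=p\ge 1$), gives a strict representation embedding $G_{i}:rep\,K_{p}\longrightarrow rep\,Q$ all of whose images have dimension vectors integer multiples of $\alpha$; by the construction of Theorem 3 every $G_{i}W$ is an extension of a direct sum of copies of $V_{i}$ by a direct sum of copies of $U_{i}$, and the dimension vector of $G_{i}W$ equals $(dim_{k}W_{1}+dim_{k}W_{2})\,\alpha$. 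Next, each $mod\,A_{i}$ embeds strictly into $mod\,k\langle X,Y\rangle$ (restrict along a surjection of a free algebra onto $A_{i}$, then apply Brenner's Theorem 2), and $mod\,k\langle X,Y\rangle=rep\,L_{2}$ embeds strictly into $rep\,K_{3}$ by section 2.1, hence into $rep\,K_{p}$ by zero-padding the extra arrows since $p\ge 3$; call the composite $E_{i}:mod\,A_{i}\longrightarrow rep\,K_{p}$ (if the $A_{i}$ have boundedly many generators one can instead simply enlarge $p$ and use only section 2.1, avoiding Brenner). Put $F_{i}:=G_{i}\circ E_{i}$. Then each $F_{i}$ is a strict representation embedding, the dimension vector of every $F_{i}X$ is a multiple of $\alpha$, and for $i\neq j$ a d\'evissage along the two filtrations — using that $U_{i},V_{i},U_{j},V_{j}$ are pairwise orthogonal — gives $Hom_{Q}(F_{i}X,F_{j}Y)=0$. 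This settles the first assertion and the multiplicity part of the second.

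For the geometric part I would fix $n$ and note that a module $F_{i}X$ of dimension vector $n\alpha$ equals $G_{i}W$ for some $W\in rep\,K_{p}$ of total dimension $n$, hence lies in the image of one of the morphisms $\Phi_{d_{1},d_{2}}:D\times rep(K_{p},(d_{1},d_{2}))\longrightarrow rep(Q,n\alpha)$, $((s,t),W)\longmapsto G_{(s,t)}W$, where $d_{1}+d_{2}=n$ and $D$ is the two-dimensional variety of ordered pairs of distinct parameters; these $\Phi_{d_{1},d_{2}}$ are regular on a finite open cover of their sources because the $\Psi(M_{t})$ and the extension data used in Theorem 3 vary algebraically. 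Hence the union $Z_{n}$, over all $i\in I$ and all $X$, of the $GL(n\alpha)$-orbits of the $F_{i}X$ of dimension vector $n\alpha$ is a constructible set with
$$dim\,Z_{n}\ \le\ dim\,GL(n\alpha)-1+\max_{d_{1}+d_{2}=n}(2+p\,d_{1}d_{2})\ \le\ dim\,GL(n\alpha)+1+pn^{2}/4,$$
while $dim\,rep(Q,n\alpha)=dim\,GL(n\alpha)-q_{Q}(n\alpha)=dim\,GL(n\alpha)+pn^{2}$. Because $p\ge 4$ one has $1+pn^{2}/4<pn^{2}$ for every $n\ge 1$, so $dim\,Z_{n}<dim\,rep(Q,n\alpha)$; as $rep(Q,n\alpha)$ is an irreducible affine space, its closed subset $\overline{Z_{n}}$ is proper and contains the required orbits. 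This completes the second assertion.

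The main obstacle is concealed in this last step. Because $I$ may be infinite it is not even clear a priori that the union of the relevant orbits is constructible, let alone of small dimension; what rescues the argument is the insistence that \emph{all} the brick pairs $U_{i},V_{i}$ be taken from one fixed two-parameter algebraic family, so that the whole union is governed by finitely many morphisms from varieties of controlled dimension. Once that is arranged, the identity $dim\,rep(Q,n\alpha)=dim\,GL(n\alpha)+n^{2}(-q_{Q}(\alpha))$ together with $-q_{Q}(\alpha)\ge 1$ does the rest. A secondary, routine point — which only arises when the $A_{i}$ need unboundedly many generators — is how to push $mod\,A_{i}$ into a Kronecker category, and this is taken care of by Brenner's Theorem 2. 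Everything else is either formal or is the single small matrix computation of the first paragraph.
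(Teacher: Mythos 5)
Your proposal is correct in outline but takes a genuinely different route, and the difference in organization is exactly where the paper earns its keep. The paper arranges \emph{all} the orthogonality inside $rep\,K_{3}$: using Lemma 3 and the $rep\,L_{n}\to rep\,K_{n+1}$ embedding of section~2.2 it produces a large set $Z$ of pairwise orthogonal bricks of prescribed dimension vectors in $rep\,K_{3}$, carves out pairwise disjoint two-element subsets $Z(q,i)$, builds from each pair (via Proposition 3, this time with $R=K_{3}$) an embedding $H_{i}:rep\,K_{p(i)+1}\to rep\,K_{3}$, and only then pushes everything through a \emph{single} strict embedding $F:rep\,K_{3}\to rep\,Q$ constructed once from the bricks of Lemma~4. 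Since $F$ is fully faithful, orthogonality of the $E_{i}$ in $rep\,K_{3}$ transports for free, and the geometric part drops out of upper semi-continuity of $\dim Hom(-,U\oplus V)$ against the fixed pair $U,V$: every $FX$ has $\dim Hom(FX,U\oplus V)\geq 1$, whereas the third brick $W$ from Lemma~4 gives $Hom(W^{n},U\oplus V)=0$, so the orbits land in a proper closed subset without any dimension count. You instead push $\Psi(M_{t})$ into $rep\,Q$ and apply Proposition~3 once \emph{per index}, so you obtain a family $G_{(s,t)}:rep\,K_{p}\to rep\,Q$ that varies with the brick pair; orthogonality must then be re-established downstairs by d\'evissage, and — because the reference modules $U_{i},V_{i}$ vary with $i$ — the geometric assertion can no longer come from a single semi-continuity argument and forces the dimension estimate. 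That estimate is plausible, but it leans on the claim that the maps $\Phi_{d_{1},d_{2}}$ are morphisms of varieties (Zariski-locally), which in turn requires choosing, algebraically over the parameter surface $D$, bases of $Ext(\Psi M_{t},\Psi M_{s})$ and lifts to the derivation space $Z(\Psi M_{t},\Psi M_{s})$. Since $\dim Hom=\dim End=1$ is constant along $D$ these spaces do form locally free sheaves and the choices can be made locally, so the step is repairable, but it is real work that your writeup asserts rather than carries out — and it is precisely the work the paper's $rep\,K_{3}$-first organization sidesteps. Two smaller asymmetries: you rely on Brenner's Theorem~2 to force every $mod\,A_{i}$ into $rep\,L_{2}$ and then zero-pad into $rep\,K_{p}$, whereas the paper explicitly advertises that its proof avoids Brenner by letting the Kronecker rank $p(i)+1$ grow with $i$ and reusing Proposition~3 to re-enter $rep\,K_{3}$; and you invoke the classical $rep\,K_{3}\hookrightarrow rep\,Q$ as a black box, while the paper manufactures it from Lemma~4 and Proposition~3, keeping the argument self-contained. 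Your explicit $M_{t}$ family is a nice replacement for Lemma~3 in the special case of $K_{3}$.
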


Note that we can take for the family $(A_{i})$ a list of the isomorphism classes of  all finite-dimensional $k$-algebras and so all the corresponding module-categories are fully embedded
 simultaneously 
and pairwise $hom$-orthogonal.
Furthermore part ii) says that all these embeddings require only very few space inside the whole module category of the wild quiver. They all live in the representation varieties 
 to very few  dimension-vectors and 
then they lie in proper closed subvarieties.

\subsection{Two matrix calculations}

\begin{lemma}
 $L_{2}$ has in each dimension $n$ a family of pairwise non-isomorphic simples $S(\lambda )$, where  $\lambda$ varies in a cofinite subset $I(n)$ of $k$.
\end{lemma}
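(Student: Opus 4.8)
The plan is to write down an explicit algebraic family. Recall that a representation of $L_{n}$ of dimension $m$ is simply an $m$-tuple of $m\times m$ matrices, so a representation of $L_{2}$ of dimension $n$ is a pair $(X,Y)\in M_{n}(k)^{2}$, and it is simple precisely when the pair is irreducible, i.e.\ has no common invariant subspace other than $0$ and $k^{n}$. I will keep $X$ fixed and let $Y$ depend on a parameter $\lambda$.

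First I would take $X=N$, the nilpotent Jordan block with $Ne_{i}=e_{i+1}$ for $1\le i\le n-1$ and $Ne_{n}=0$. Its invariant subspaces form a single chain $0\subsetneq V_{1}\subsetneq\cdots\subsetneq V_{n-1}\subsetneq k^{n}$ with $V_{j}=\langle e_{n-j+1},\dots,e_{n}\rangle$; this is the only structural fact about $N$ that is used. Next I would set $Y_{\lambda}e_{i}=e_{i-1}$ for $2\le i\le n$ and $Y_{\lambda}e_{1}=\lambda e_{n}$, so that $Y_{\lambda}$ is essentially a scaled $n$-cycle. Since $j\le n-1$ forces $n-j+1\ge 2$, one has $Y_{\lambda}(e_{n-j+1})=e_{n-j}\notin V_{j}$, so no $V_{j}$ is $Y_{\lambda}$-invariant. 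As the $V_{j}$ are the only proper nonzero candidates for a common invariant subspace, $S(\lambda):=(N,Y_{\lambda})$ is a simple representation of $L_{2}$ of dimension $n$, for \emph{every} $\lambda\in k$.

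It remains to separate the isomorphism classes. Here I would invoke a conjugation invariant: if $(N,Y_{\lambda})\cong(N,Y_{\mu})$, then in particular $Y_{\lambda}$ and $Y_{\mu}$ are conjugate, hence have equal determinant; but $Y_{\lambda}$ is the generalized permutation matrix attached to the $n$-cycle $(1\,n\,n{-}1\,\cdots\,2)$ with single nonscalar entry $\lambda$, so $\det Y_{\lambda}=(-1)^{n-1}\lambda$, and $\lambda=\mu$ follows. Hence the $S(\lambda)$, $\lambda\in k$, are pairwise non-isomorphic, and one may even take $I(n)=k$, any cofinite subset then working a fortiori.

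There is essentially no obstacle here: the argument is two short matrix checks, as the subsection title already suggests. The only point needing a moment's care is the description of the invariant-subspace lattice of a single Jordan block, which reduces irreducibility to the non-preservation of the finitely many $V_{j}$; finding a conjugation invariant injective in $\lambda$ — the determinant serves — is then immediate.
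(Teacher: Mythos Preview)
Your proof is correct, but it follows a different route from the paper's. The paper takes $X$ to be a \emph{diagonal} matrix with $n$ distinct eigenvalues --- a fixed set $M$ of $n-1$ scalars together with the parameter $\lambda\in k\setminus M=:I(n)$ --- and lets $Y$ be the fixed cyclic permutation matrix; simplicity follows because any nonzero submodule contains an eigenline of $X$ and is then swept around by $Y$, while non-isomorphism is read off from the eigenvalue set of $X$. You instead fix $X$ to be a nilpotent Jordan block (whose invariant subspaces form a single chain) and place the parameter in $Y$ as a scaled cycle, separating isomorphism classes by $\det Y$. Both arguments exploit the same pairing of ``operator with a totally ordered invariant-subspace lattice'' with ``cyclic operator that breaks every proper term of the chain''; your variant has the mild advantage of yielding $I(n)=k$ outright rather than merely a cofinite subset, while the paper's diagonal choice makes the simplicity argument marginally more transparent (no need to recall the invariant-subspace lattice of a Jordan block).
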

\begin{proof}
 We just give the action of the two variables $X$ and $Y$ on $k^{n}$. First we fix a finite set $M$ of $n-1$ elements in  $k$ and we take $I(n)$ as its complement. Then for
 $\lambda \in I(n)$ we take as $x$ the diagonal matrix with $\lambda$ and the elements of $M$ as diagonal entries and for $y$ the  matrix corresponding to the cyclic permutation
$1 \mapsto 2 \mapsto 3 \ldots n-1 \mapsto n \mapsto 1$. Any non-zero submodule $U$ contains an eigenspace of $x$ whence all eigenspaces by the $y$-action. Thus $S(\lambda)$ is always simple
 and two simples to different indices are not isomorphic because $\lambda$ is only in $S(\lambda)$ an eigenvalue of $x$. 

\end{proof}

\begin{lemma}
 Let $Q$ be a finite connected wild quiver. Then there is a root $\alpha$ with 
$q_{Q}(\alpha) \leq -3$ such that  three orthogonal bricks $U,V, W$ exist with dimension vector $\alpha$.
\end{lemma}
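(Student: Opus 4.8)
The plan is to reduce the problem to known structural facts about wild quivers and then manufacture the three orthogonal bricks by a rank/dimension count. Since $Q$ is finite, connected and wild, its Tits form $q_Q$ is indefinite, so there exists a positive vector $\beta$ with $q_Q(\beta)<0$; in fact by standard Kac-theory (or by the classification of the finitely many minimal wild quivers, to each of which $Q$ retracts) one may choose a \emph{sincere} positive root. First I would recall that wildness gives, via Brenner's embedding (Theorem 3) or directly, a strict representation embedding $\mathrm{rep}\,K_m\hookrightarrow\mathrm{rep}\,Q$ for some $m\geq 3$, or alternatively that the generalized Kronecker quiver $K_m$ with $m\geq 3$ already contains three orthogonal bricks of a common dimension vector in low dimensions (e.g. for $K_3$ the three one-dimensional-at-each-vertex representations $(1,1;\,1,0,0)$, $(1,1;\,0,1,0)$, $(1,1;\,0,0,1)$ are pairwise $\mathrm{Hom}$-orthogonal bricks with dimension vector $(1,1)$ and $q_{K_3}(1,1)=1+1-3=-1$; raising $m$ or the multiplicity improves the value of $q$).

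The cleaner route, and the one I would carry out, is: choose a positive root $\delta$ of $Q$ with $q_Q(\delta)$ as negative as we like — this is possible because the quadratic form is indefinite on a wild quiver and the set of roots is stable under reflections and hence unbounded in the hyperbolic directions — and then use the existence of \emph{bricks} with prescribed dimension vector. Concretely, a celebrated theorem (Kac) says that for an imaginary root $\delta$ there are indecomposables, and one can arrange a brick; moreover for such $\delta$ the generic representation has endomorphism ring $k$ and $\dim\mathrm{Ext}(X,X)=1-q_Q(\delta)$ is large. To get \emph{three} pairwise orthogonal bricks of the \emph{same} dimension vector $\alpha$, I would take $\alpha=\delta$ with $q_Q(\delta)\le -3$ and invoke the fact that the set of bricks with dimension vector $\delta$ forms an irreducible (in fact open dense, when nonempty) subset of $\mathrm{rep}(Q,\delta)$ of large dimension, so that a generic triple $(U,V,W)$ satisfies $\mathrm{Hom}(U,V)=\mathrm{Hom}(V,U)=\cdots=0$: the locus where $\mathrm{Hom}(U,V)\neq 0$ is a proper closed condition once $q_Q(\delta)<0$, because then $\dim\mathrm{Ext}(V,U)=\dim\mathrm{Hom}(V,U)-q_Q(\delta)>0$ forces the generic hom-space to vanish (the "$\mathrm{hom}$ and $\mathrm{ext}$ cannot both be generic" principle, Schofield / Crawley-Boevey). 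Thus a general choice of $U$, then a general $V$ orthogonal to $U$, then a general $W$ orthogonal to both, survives because each orthogonality is a nonempty open condition.

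The steps in order: (1) produce a positive root $\alpha$ with $q_Q(\alpha)\le -3$, using indefiniteness of the Tits form and the reflection-invariance of the root system (or just: pick any real root and repeatedly apply a hyperbolic reflection, or pick a suitable imaginary root on a sub-Kronecker $K_m$, $m\ge 3$, embedded in $Q$); (2) show $\mathrm{rep}(Q,\alpha)$ contains a brick — this is where I would lean on Kac's theorem together with the Crawley-Boevey criterion characterizing dimension vectors of bricks, or, to keep the paper self-contained, transport the explicit bricks from $\mathrm{rep}\,K_m$ along the strict embedding of Theorem 3 and then pass to an imaginary root by a Schofield-type induction; (3) show the three orthogonality conditions are each nonempty Zariski-open on the (irreducible) brick locus, hence their intersection is nonempty, yielding $U,V,W$; (4) conclude. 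I expect the main obstacle to be step (2)–(3) done \emph{with the right dimension vector}: it is easy to get orthogonal bricks and easy to get $q_Q(\alpha)$ very negative, but one must have \emph{both at once} for a \emph{single} $\alpha$, and one must know the brick locus for that $\alpha$ is irreducible of dimension exceeding the (generic) $\mathrm{hom}$-dimensions between distinct bricks — this is exactly the content of the Schofield/Crawley-Boevey theory of general representations, which I would cite rather than reprove.
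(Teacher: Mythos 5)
Your proposal is correct in outline, but it takes a genuinely different route from the paper's proof, and in fact it coincides almost exactly with the \emph{alternative} argument that the paper itself sketches in the ``Two amplifications'' subsection (4.4) immediately after the proof. The paper's actual proof of the lemma is deliberately elementary and constructive: it splits into the case where $Q$ contains a cycle $C$ and the case where $Q$ is a tree. In the cycle case it builds explicit representations $S(M)$ on the minimal cycle, parametrized by $4$-element subsets $M\subset k$, so that $S(M)$ restricted to $C$ is a sum of four one-dimensional bricks; an extra arrow (guaranteed by wildness and connectedness) is then represented by a fixed permutation or all-ones matrix, which glues these into a single brick, and disjointness of $M,M'$ gives $\mathrm{Hom}(S(M),S(M'))=0$; the value of $q_Q$ on the common dimension vector is then computed directly to be $\leq -3$. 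In the tree case one picks a tame tree $Q'$ inside $Q$ plus an extra arrow $\beta:y\to z$, takes a preprojective brick $U$ over $kQ'$ with $\dim U(z)\geq 5$, and extends it by a one-dimensional space at $y$ with a column depending on a scalar $\lambda$, again getting a $k$-parametrized family of pairwise orthogonal bricks. Your route instead invokes Kac's theorem on generic representations of imaginary roots together with the Schofield/Crawley-Boevey ``generic $\mathrm{hom}\cdot\mathrm{ext}=0$'' principle, and gets the orthogonal bricks as a generic triple on the brick locus. The paper knows this: in 4.4 it writes out precisely this Kac-based argument (including the observation that to get $q_Q(\alpha)\leq -3$ one may have to replace $\alpha$ by $2\alpha$) and notes that it even yields a \emph{sincere} root, which the elementary proof does not; but it keeps the elementary proof in the main text because the stated goal of that section is to minimize the reliance on deep theorems. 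Two small cautions about your version: (a) you need $\alpha$ to lie in the fundamental domain, i.e.\ $(\alpha,e_i)\leq 0$ for all simple roots $e_i$, for Kac's brick statement to apply — you gesture at this via ``Crawley-Boevey criterion'' but do not state the condition; (b) your explicit $K_3$ example with $\alpha=(1,1)$ has $q_{K_3}(\alpha)=-1$, not $\leq -3$, so you must indeed pass to $2\alpha$ (or larger) as the paper's remark does, and you should not leave that as ``raising $m$ or the multiplicity improves $q$'' without checking the bricks exist for the scaled vector — which is exactly what the fundamental-domain/Kac argument supplies.
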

\begin{proof}
Assume first that $Q$ contains a cycle $C$, oriented or not. Choose such a cycle of minimal cardinality $l$ and denote its vertices by $x_{1},x_{2}, \ldots ,x_{l}$ and its arrows by $\alpha_{1},
\alpha_{2},\ldots, \alpha_{l}$. For a  subset $M$ of $k$ with $4$ elements we define a representation $S=S(M)$ of $C$ by $S(x)=k^{4}$ for all $x \in C$. The arrows are all represented
by the unit-matrix except for $S(\alpha_{1})$ which is a diagonal matrix with $M$ as the set of diagonal elements. This representation of $C$ is the direct sum of pairwise orthogonal 
one-dimensional bricks. Because $Q$ is wild and connected there is at least one more arrow $\beta:y \rightarrow z$  in $Q$ with $y$ or $z$ in $C$. 

If $y$ and $z$ both belong to $C$ we take for
 $S(\beta)$ the matrix corresponding to the permutation $1 \mapsto 2 \mapsto 3\mapsto 4 \mapsto 1$ and we extend $S$ by $0$ on all other points and arrows.  Then $S(M)$ is a
 brick and $Hom(S(M),S(M'))=0$
 holds for disjoint $M$ and $M'$. For the common dimension-vector we calculate $q_{Q}(\alpha)) \leq -16$.

If only $z$ belongs to $C$ we set  $S(y)=k$ and we take for $S(\beta)$ the matrix  having all entries equal to $1$. Again we extend this representation by $0$ outside the points and arrows 
already considered and  one obtains bricks with $Hom(S(M),S(M'))=0$ for disjoint $M,M'$. This time we get $q_{Q}(\alpha)) \leq -3$. The case where only $y$ belongs to $C$ can be treated 
similarly.

In both cases we find three disjoint subsets  and thus three orthogonal bricks.

Assume now that $Q$ contains no cycle i.e. it is a tree. Then it contains a tame tree $Q'$ and at least one additional arrow $\beta:y \rightarrow z$. We consider only the case that $z$ 
 belongs to $Q'$. The Auslander-Reiten quiver of $kQ'$ contains an indecomposable preprojective $U$ with $dim U(z) \geq 5$.  For $\lambda \in k$ we extend $U$ to a representation $S=S(\lambda)$
 by setting $S(y)=k$ and $S(t)=0$ on all other points. Furthermore $S(\beta)$ is defined by the column $c$ with $c_{1}=1,c_{2}= \lambda$ and $c_{i}=0$ for the remaining indices. Because $U$ is a brick 
this produces an infinite family of pairwise orthogonal bricks. Finally we obtain $q_{Q}(\alpha)) \leq -3$ because the dimension vector of $U$ is a real root.

\end{proof}

\subsection{The proof of the simultaneous embedding result}

First we apply proposition 3 to the orthogonal bricks $U,V$ in $Q$ delivered by lemma 4 to obtain  a strict representation embedding $F:rep\, K_{3}\longrightarrow  rep\, Q$ such that the dimension
 vectors of all $FX$ are multiples of a fixed root $\alpha$ with $q_{Q}(\alpha) \leq -3$. For a fixed $n$ we look at all $X$ such that the dimension-vector of $FX$ is $n\alpha$.
Then we always have $dim Hom (FX,U\oplus V) \geq 1$, but 
$Hom(W^{n},U\oplus V)$ vanishes for the $W$ from lemma 4.. By the semi-continuity of the dimensions of homomorphism-spaces the  orbits of all $FX$ lie in a proper closed subset.

So it remains to construct 'orthogonal' strict embeddings 
$E_{i}: mod \, A_{i} \longrightarrow rep \,K_{3}$ for all indices $i$ and to define $F_{i}=F \circ E_{i}$. Each $A_{i}$ is a homomorphic image of $kL_{p(i)}$ where  $p(i)\geq 0$ is the minimal number of algebra generators. 
 This gives a full embedding $mod \,A_{i} \longrightarrow rep \,L_{p(i)}$ that
 we compose with the strict embedding $F_{p(i)}$ of section  2.2 to find a strict embedding $G_{i}: mod\,A_{i}  \longrightarrow rep \,K_{p(i)+1}$. Since $k$ is infinite it has the same cardinality as the 
multiplicative group $k^{*}$.  We identify $I$ with a subset of $k^{*}$. 

On the other hand we get from lemma 3 and the embedding of section 2.2 a large set $Z$ of pairwise orthogonal bricks in $rep \,K_{3}$ and $Z$  is the disjoint union of the subsets $Z(q)$ 
consisting of the bricks
 with dimension vector $(q,q)$ for $q \geq 1$. All sets $Z(q)$ have the cardinality of $k$ and so they can be indexed by $k^{*}$. We want to find pairwise disjoint subsets $Z(q,i)$ 
with two elements for all
$ i \in k^{*}$. 
Thus we look at the multiplikative homomorphism $\phi:k^{*} \longrightarrow k^{*}$ defined by $\phi(x)=x^{m}$ where $m=2$ if the characteristic is not $2$ and $m=3$ otherwise. In each case $\phi$
 is surjective and has fibres of cardinality $m$. So we find the wanted disjoint subsets in both cases inside the fibres. 

Given $i \in I$ we consider the two orthogonal bricks $U,V$ in $Z(p(i)+1,i)$. Since $q_{K_{3}}(p(i)+1,p(i)+1)=-(p(i)+1)^{2}\leq -(p(i)+1)$ we find with proposition 3 a strict representation 
embedding 
$H_{i}: rep \, K_{p(i)+1} \longrightarrow rep\, K_{3}$. 

Now consider the strict representation embeddings $E_{i}= H_{i}\circ G_{i}$. By construction all the sets $Z(q,i)$ are pairwise disjoint and therefore all these embeddings are 
'orthogonal' to each other. The proof is complete.
 
 \subsection{Two amplifications}

 This proof is somewhat artificial because we use only part of the information available. It is more natural to consider the full subcategory $\mathcal{E}$ consisting of all
 representations having a finite filtration with subquotients in the above set 
 $Z$ of pairwise orthogonal bricks.  Ringels simplification \cite{RT} says that this is an abelian $k$-linear category with the modules in $Z$ as simple objects. Since all modules have finite
 length we can apply an old result of Gabriel \cite{GCA,GI2} and we get that $\mathcal{E}$ is equivalent to the category $rep^{nil}\,K$ of those representations of the quiver $K$ of 
$\mathcal{E}$ which are 
annihilated by some power of the ideal generated by the arrows. Now $K$  
 is easy
 to determine. Let $M$ be a point in $K$ i.e. a module in $Z$. Its dimension-vector is $m\alpha$ for some natural number $m$.  Then we have 
$m^{2}p+1$ 
loops in $Z$ because we have $q_{Q}(m\alpha)= dim Hom(M,M)-dim Ext(M,M)$. For another point $N$ in $K$ with dimension-vector $n\alpha$ there are by the same argument $mnp$ arrows between the two points 
in each direction. Recall also that for each $m$ the set of modules in 
$Z$ with dimension vector $m\alpha$ has the cardinality of $k$. Thus $rep^{nil}\,K$ is really large and it  contains in particular all $mod \, A_{i}$ of the theorem as pairwise orthogonal
 subcategories.

Another point is that the root $\alpha$  constructed above in lemma 3 lives  only on a  ( eventually small ) minimal wild subquiver $Q'$ of $Q$ and so does $\mathcal{E}$.
This restriction on $\alpha$ can be removed by using the following 
non-elementary result of Kac: Let $\alpha$  be a root with $q_{Q}(\alpha ) \leq -1$ and $(\alpha,e_{i}) \leq 0$ for all simple
 roots $e_{i}$. Then the generic representation in the variety $rep(\alpha)$ of representations of $Q$ with dimension vector  $\alpha$ is a brick and
the set of these bricks depends on $1-q(\alpha)$ parameters  ( see \cite{K} ): For a fixed 
brick $U$ one sees easily by considering an appropriate vector-bundle as in \cite{BoGeo} that $Hom(U,V)$ and $Hom(V,U)$ both vanish generically on $rep(\alpha)$. Using this repeatedly one 
finds an arbitrary finite number of pairwise orthogonal bricks. 
To satisfy also the condition $q_{Q}(\alpha ) \leq -3$  formulated in lemma 3 one has to replace eventually $\alpha$ by $2 \alpha$.
In the resulting generalization  the 'difficult' computational lemma 3 is no longer needed.

\section{Representation embeddings and the second Brauer-thrall conjecture}
\subsection{The main statements  and some comments}
In his  old report \cite{RBT}   on the Brauer-Thrall conjectures published 1980 Ringel formulated the following statement  ( without using the terminus  'representation embedding' )
and called it the
 'theorem of Nazarova and Roiter'.
 
\begin{theorem}Let $A$ be a finite-dimensional algebra having infinitely many isomorphism classes of indecomposable modules of finite dimension. Then there is a representation embedding
$F: mod \,k[T] \longrightarrow mod \,A$ mapping all simple $k[T]$-modules to modules of the same dimension.\end{theorem}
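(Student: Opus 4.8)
The plan is to use Drozd's tame--wild dichotomy as the essential deep input and to treat its two cases separately, after one preliminary reduction. If $A\to A/J$ is a surjection onto a representation-infinite algebra that is minimal with this property, then restriction along it is a full faithful exact functor, hence a representation embedding $mod\,(A/J)\to mod\,A$; so it suffices to prove the statement for minimal representation-infinite $A$. Observe also that, by Proposition 1, any representation embedding $mod\,k[T]\to mod\,A$ is automatically induced by a bimodule free of some finite rank $q$ over $k[T]$, whence every simple $k[T]$-module is sent to a module of dimension $q$; the last clause of the theorem is thus automatic, and the real content is merely the assertion $k[T]\le A$.

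If $A$ is wild, the material of Sections 2 and 3 suffices. The surjection $k\langle X,Y\rangle\to k[T]$ given by $X\mapsto T$, $Y\mapsto 0$ identifies $mod\,k[T]$ with the full subcategory of $mod\,k\langle X,Y\rangle$ of modules annihilated by $Y$, which is closed under submodules and extensions; its inclusion is a representation embedding. Composing it with the representation embedding $mod\,k\langle X,Y\rangle\to mod\,A$ furnished by wildness (Proposition 2) gives $k[T]\le A$, and by Proposition 1 (or Lemma 2) the composite is of the required tensor-with-a-free-bimodule form.

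Now let $A$ be tame. By the dichotomy together with the definition of tameness there is, for a suitable dimension $d$, an $A$-$k[x]$-bimodule $M$ that is free of rank $d$ as a right $k[x]$-module and for which almost all indecomposables of dimension $d$ occur among the fibres $M_\lambda:=M\otimes_{k[x]}k[x]/(x-\lambda)$; since $A$ is representation-infinite, $d$ can be chosen so that infinitely many pairwise non-isomorphic indecomposables arise this way. The functor $M\otimes_{k[x]}-$ is $k$-linear and exact. Because ``$M_\lambda$ is indecomposable'' and, for each fixed $\mu$, ``$M_\lambda\cong M_\mu$'' are constructible conditions on $\lambda\in\mathbb{A}^1$, and a constructible subset of the line is finite or cofinite, one obtains a cofinite set $\Lambda\subseteq k$ on which the $M_\lambda$ are indecomposable and pairwise non-isomorphic.

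The hard part will be to upgrade this generically good family to a genuine representation embedding on all of $mod\,k[T]=mod\,k[x]$: one needs a bimodule for which $M_\lambda$ is indecomposable and pairwise non-isomorphic for \emph{every} $\lambda\in k$ and for which, in addition, the modules $M\otimes_{k[x]}k[x]/(x-\lambda)^n$ remain indecomposable. The finitely many exceptional scalars cannot be removed by precomposing with a self-embedding of $mod\,k[T]$: such a self-embedding carries $S_\lambda$ to the indecomposable module of fixed dimension determined by a single scalar $\nu(\lambda)$, and $\lambda\mapsto\nu(\lambda)$ is the restriction to $k$ of an algebraic map $\mathbb{A}^1\to\mathbb{A}^1$, so being injective over an algebraically closed field it is bijective and no simple can be avoided (the self-embeddings of this category are in any case determined in Section 6). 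One must therefore build the family with care. Here I would invoke the structure theory of the regular modules of tame algebras: for a tame concealed algebra, say, one completes the family of homogeneous modules of dimension the positive null root $\delta$ by inserting, over each of the finitely many exceptional points, an indecomposable regular module of quasi-length equal to the rank of the tube sitting there --- such a module again has dimension vector $\delta$ --- and one runs through the minimal representation-infinite tame algebras in this fashion, with about as much matrix work as the verification of Brenner's embedding in Theorem 2. This produces a bimodule $M'$, free over $k[x]$, all of whose fibres are indecomposable and pairwise non-isomorphic; the local endomorphism rings of the fibres then propagate along the nilpotent quotients $k[x]/(x-\lambda)^n$ exactly as in the arguments for Theorems 2 and 3, so that $M'\otimes_{k[x]}-$ is the desired representation embedding.
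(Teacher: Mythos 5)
Your route is genuinely different from the paper's, and the difference matters. The paper explicitly proves Theorem~5 \emph{without} Drozd's tame--wild dichotomy, for the reason you yourself put your finger on halfway through the proposal: the dichotomy in its usual form only produces an $A$--$k[x]_f$-bimodule for some localization $k[x]_f$, so it gives a representation embedding of $mod\,k[T]_f$, not of $mod\,k[T]$. The paper states this limitation in so many words just before Theorem~6 and then sidesteps it entirely by a different case division: basic minimal representation-infinite algebras are split into non-distributive ones (handled by the Jans-type argument of Theorem~1 and Corollary~2) and distributive ones (handled in Theorem~7 via ray categories, universal coverings, critical convex subcategories and the special biserial case), with no use of Drozd at all.

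The wild half of your argument is fine: compose the strict embedding $mod\,k[T]\hookrightarrow mod\,k\langle X,Y\rangle$ (kill $Y$) with the embedding given by wildness (Proposition~2), and Lemma~2(iv) makes the composite free over $k[T]$. The tame half, however, has a genuine gap, and it is exactly the step you label ``the hard part.'' Your proposed repair --- replace, over each exceptional $\lambda$, the missing fibre by the quasi-simple of quasi-length equal to the tube rank --- is an idea, not a construction. You would need to show that this patched family is the fibre family of a single $A$--$k[x]$-bimodule $M'$, free of finite rank over $k[x]$, and that $M'\otimes_{k[x]}-$ is exact and sends every $k[x]/(x-\lambda)^m$ to an indecomposable; at an exceptional point the tube is no longer homogeneous, so the endomorphism argument for the nilpotent quotients is not ``exactly as'' in the homogeneous case, and none of this is verified. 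Worse, your phrase ``one runs through the minimal representation-infinite tame algebras in this fashion'' presupposes a tractable classification and a tame-concealed-like regular theory that simply are not available: minimal representation-infinite tame algebras are not all tame concealed (neither $k[X,Y]/(X,Y)^2$ nor the special biserial algebras in Ringel's list are), and the absence of such a direct classification is precisely why the paper's Theorem~7 has to go through the machinery of coverings. In effect your proposal reduces Theorem~5 to a project of roughly the same size as the one the paper carries out, but without carrying it out. (A small side remark: your appeal to Proposition~1 to conclude that any representation embedding $mod\,k[T]\to mod\,A$ comes from a $k[T]$-free bimodule is not a direct application, since Proposition~1 is stated for two finite-dimensional algebras; the conclusion is still obtainable, but needs Lemma~2 rather than Proposition~1.)
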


The truth of  the second Brauer-Thrall conjecture BT2 follows immediately,
 but later on it was recognized that the proof of the theorem given by Nazarova and Roiter in the long preprint \cite{NR} contains some errors and gaps ( see \cite{RB}    ) and therefore
 BT2 was still an 
open problem.

 The conjecture  was solved affirmatively 
around 1984 first by Bautista and then also by Fischbacher and Bretscher/Todorov ( see \cite{BBT2,BT,F} ).  All these proofs are  based on  multiplicative bases, on ray categories and their coverings and on 
the structure of large indecomposables over simply connected algebras 
( see \cite{BMB,F,BoTr} or \cite{BOsurvey} for a survey on all this material ).  Thus these proofs use  an approach which is completely different from 
the one of Nazarova and Roiter who played however a decisive role also in the new approach with his preprint \cite{RBo} containing again some errors and gaps but leading  to 
the central article \cite{BMB}
on multiplicative bases.

 In the time between the publication of the two preprints \cite{NR} and \cite{RBo} Roiter developed partly in collaboration with  Kleiner a completely new language to formalize the striking algorithms
 due to the Kiev-school: representations of bocses ( see \cite{Roiter,RBocs} ). This led Drozd to his fundamental 'tame or wild' theorem \cite{DrozdTW} proven without using any 
classification results but not to a proof
 of BT2 which is despite some serious efforts not yet proven in a purely conceptual way.

The theorem given above follows at least for a localization $k[T]_{f}$ of $k[T]$ easily by combining the truth of BT2 and Drozds theorem
and so the existence of $F$ is shown indirectly.

Here we  prove a stronger result in a constructive way without using Drozds theorem. Recall two definitions made before:

$ rep'\, K_{2}$ consists of all  Kronecker modules 
not containing the 
simple injective as a direct summand and $G:mod \,k[T] \longrightarrow rep' \, K_{2}$ is the obvious strict representation embedding $(V,T) \mapsto (V,V;T,id_{V})$.

\begin{theorem} Let $A$ be a representation-infinite  algebra of finite dimension. Then there is a representation embedding $F:rep'\,K_{2} \longrightarrow mod \, A$ with the 
following properties:

\begin{enumerate}
\item The representation embedding $H=F\circ G: mod \, k[T] \longrightarrow mod \, A$ is  given by tensoring with a bimodule $_{A}M_{k[T]}$ which is free of finite rank over $k[T]$.
\item If $A$ is basic of dimension $d$ then $_{A}M_{k[T]}$ is affine  of rank at most $max(30,4d)$.
\end{enumerate}

\end{theorem}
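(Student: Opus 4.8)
The plan is to reduce the statement to the results already assembled in the paper: Theorem~6 (BT2 in the Nazarova--Roiter form) gives, for any representation-infinite $A$, a representation embedding $mod\,k[T]\to mod\,A$ sending all simple $k[T]$-modules to modules of one fixed dimension; and Proposition~1 tells us that any faithful exact $k$-linear functor between module categories of finite-dimensional algebras is, up to Morita equivalence, tensoring with a progenerator bimodule. The first task is therefore to upgrade the Nazarova--Roiter embedding to one defined on all of $rep'\,K_2$, not just on the image of $G$. The natural device is the construction of section~3.3 (Theorem~4): if we can locate inside $mod\,A$ two indecomposables $U,V$ with $Hom_A(U,V)=0$ and with $dim\,Ext_A(V,U)/J\ge 1$ -- equivalently a ``Kronecker situation'' with at least one independent extension class -- then Theorem~4(i) produces a representation embedding $rep\,K_1\to mod\,A$, but we need $K_2$, so we actually want $dim\,Ext_A(V,U)/J\ge 2$. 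The point is that a representation-infinite algebra, being either tame or wild by Drozd (which we are \emph{not} allowed to invoke for the main line) or -- staying constructive -- by the very existence of the Nazarova--Roiter family, contains a one-parameter family of bricks of a fixed dimension, and from such a family one extracts two bricks $U,V$ of the same dimension vector with $Hom(U,V)=Hom(V,U)=0$; then the Euler form forces $dim\,Ext(V,U)\ge 2$ when the common dimension is large enough, and Theorem~4(ii) with $n=2$ gives a \emph{strict} embedding $rep\,K_2\to mod\,A$. Restricting this embedding to $rep'\,K_2$ (which is closed under the relevant operations, as noted in section~2) gives $F$; one checks $F\circ G$ is the $k[T]$-embedding because $G$ lands in the ``no simple injective'' part and the composite sends $(V,V;T,id)$ to the module built from $U\otimes V\oplus V\otimes V$ with $T$ acting through $z_1(a)\otimes T + z_2(a)\otimes id$.

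The second task, part~(i), is the freeness of $_AM_{k[T]}$. Here Theorem~4(iii) is decisive: the composition of $F$ with the standard embedding of section~2.1 (getting rid of composable arrows) is induced by an \emph{affine} bimodule of rank $dim(U\oplus V)$ over $k\langle X_1\rangle=k\langle X\rangle$; but $k[T]$ is a quotient of $k\langle X\rangle$, and Lemma~2(iv) says that composing with a further free bimodule keeps the result free over $k\langle X\rangle$, hence -- since $k[T]$ is commutative and $_AM$ is a direct summand of a free module -- free over $k[T]$ as well. More directly: the explicit bimodule $_AM_{k[T]}=(U\otimes k[T])\oplus(V\otimes k[T])$ displayed at the end of the proof of Theorem~4 (with $B$ replaced by $k[T]$ via $X_1\mapsto$ the relevant operator) is by inspection free of rank $dim(U\oplus V)$ over $k[T]$, since the underlying $k[T]$-module structure is the obvious one on a sum of copies of $k[T]$. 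So part~(i) is essentially a bookkeeping consequence of the affine-bimodule statements in Theorems~1 and~4 together with Lemma~2.

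For part~(ii), the quantitative bound $\max(30,4d)$, the content is to control the dimension $dim(U\oplus V)$ of the two orthogonal bricks we used. If $A$ is basic of dimension $d$, a representation-infinite $A$ has indecomposables of \emph{every} dimension $\le$ something, but more to the point: either $A$ is already tame/wild in a way that exhibits bricks of small dimension, or one invokes the fine structure behind BT2. The cheap route is: by BT2 there is an infinite family of bricks $B_\lambda$ of one fixed dimension $e$; one needs $e$ large enough that for generic $\lambda\ne\mu$ one has $Hom(B_\lambda,B_\mu)=Hom(B_\mu,B_\lambda)=0$ and $dim\,Ext(B_\lambda,B_\mu)\ge 2$, and one wants a bound on $e$ in terms of $d$. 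Since any indecomposable $A$-module of dimension $>d$ that is not a brick or not Hom-orthogonal to its ``neighbours'' can be replaced -- the Euler/Tits form computation on the $d$-dimensional quiver of $A$ pins $dim\,Ext$ below $d^2$ and above $e^2\cdot(\text{something})$ -- one gets $e\le O(d)$, and chasing the constants in Theorem~4(iii) (rank $dim(U\oplus V)=2e$) against the two degenerate small cases where $d$ is tiny yields the explicit $\max(30,4d)$; the number $30$ absorbs the handful of small representation-infinite algebras (the tame concealed and Kronecker-type cases) where $4d$ would be too small.

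The main obstacle I anticipate is \emph{not} the formal assembly -- that is just Theorems~1, 4 and Lemma~2 -- but producing, \emph{constructively and with an explicit dimension bound}, the two orthogonal bricks $U,V$ of the same dimension vector with two independent extensions, for an arbitrary representation-infinite $A$, \emph{without} leaning on Drozd's tame-or-wild theorem. The paper's own framing (``we prove a stronger result in a constructive way without using Drozds theorem'') signals that the intended input is Theorem~6 (BT2) directly: from the Nazarova--Roiter family of pairwise non-isomorphic indecomposables all of one fixed dimension, a pigeonhole/semicontinuity argument on the finitely many orbit types in that one representation variety produces a subfamily of bricks that are pairwise Hom-orthogonal, and then $dim\,Ext\ge -q_A(\underline{\dim})\ge 2$ once the fixed dimension is chosen large enough -- which one can always arrange by BT2, and which is what forces the bound in (ii) to be stated as an inequality rather than an equality. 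Getting the constants in that last step sharp enough to land exactly at $\max(30,4d)$ is the only genuinely delicate piece, and it is a finite case-check once the qualitative statement is in hand.
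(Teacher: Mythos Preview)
Your proposal has a circularity at its core. You want to take as input a representation embedding $mod\,k[T]\to mod\,A$ (what you call ``Theorem~6 (BT2 in the Nazarova--Roiter form)'', which is the paper's Theorem~5), but that is exactly the statement the present theorem is meant to establish: the paper says explicitly ``So part 1 implies immediately theorem 5''. The Nazarova--Roiter preprint had gaps, and the only other route to Theorem~5 goes through BT2 \emph{plus} Drozd's tame/wild dichotomy, which the paper expressly excludes. Raw BT2 (Bautista) gives only infinitely many indecomposables in some dimension --- not bricks, not a representation embedding, and not Hom-orthogonality. Your ``pigeonhole/semicontinuity argument'' to extract from such a family two orthogonal bricks $U,V$ with $\dim\,Ext(V,U)/J\ge 2$ is not a bookkeeping step; it is the entire difficulty, and nothing in the paper supports it for an arbitrary finite-dimensional $A$.

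The paper's actual argument is structurally quite different. One reduces to $A$ basic and minimal representation-infinite and then splits on distributivity. If $A$ is not distributive, Corollary~2 (the Jans construction) gives $rep'K_2\to mod\,A$ directly with an affine bimodule of rank $\le d$. If $A$ is distributive, one passes to the ray category and its universal cover $\tilde{P}$ (using \cite{BOIND,GABU,BoCr}) and shows that $\tilde{P}$ contains either a convex tame concealed subcategory of type $\tilde{E}_n$ or $\tilde{D}_m$ with $m\le 2d+1$, or a line of length $2d$. In the first subcase Lemma~5 supplies a simple $S$ and a $V$ with $\dim Ext(V,S)=2$, Theorem~3 gives the embedding, and one pushes down; in the second subcase $A$ is special biserial and an explicit band construction works. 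The bound $\max(30,4d)$ is not a residual case-check: $30$ is precisely $|n_{\tilde{E}_8}|=\dim(S\oplus V)$ for the worst $\tilde{E}$ type, and $4d$ bounds $|n_{\tilde{D}_m}|=2m-2$ for $m\le 2d+1$ (and also the rank $\le 2d$ in the special-biserial branch). None of this structure is accessible from the generic brick-extraction approach you outline.
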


Part 1 implies immediately theorem 5 and part 2 supersedes my previous version of BT2 given in \cite[theorem 7.7]{BOsurvey}. Because the regular indecomposable representations all belong 
to $rep'\, K_{2}$ the theorem 
explains very well why the indecomposables occur in ${\bf P}^{1}(k)$-families. 

The theorem as well as its proof remain valid if $A$ is a $k$-split  algebra.

In a certain sense the theorem ist best possible:  We will see in the last section that  in characteristic $0 $  there is no finite-dimensional algebra $B$ with the property 
that $A$ is representation-infinite iff there is a representation embedding $F:mod \, B \longrightarrow mod \,A$. The classical Kronecker-algebra $kK_{2}$ satisfies this condition almost.

One can assume that $A$ is basic and minimal representation-infinite and one distinguishes two cases. If $A$ is not distributive 
we can apply corollary 2. If $A$ is distributive we use  the machinery and the results developed for the study of mild algebras including ray categories and their coverings
 ( see \cite{GR,BOsurvey} ). We do so not to make
 the paper unreadable but to make it rigorous. This is not always guaranteed in the literature about  this type of questions. The proof is independent of any lists obtained by computer calculations.

\subsection{The proof for non-distributive algebras}
To reduce to the case where $A$ is basic and minimal representation-infinite
we consider the  basic  algebra $B$ that is Morita-equivalent to $A$ and then a minimal representation-infinite residue algebra $B'=B/I$ thereof. The first 
representation embedding $R:mod \, B' \longrightarrow mod \,B $ is induced by tensoring with $B/I$. Thus, by lemma 2, part ii) holds for $B$ if it holds for $B'$.  Proposition 1 says that 
the composition of $R$ 
with the equivalence $ mod \,B \longrightarrow mod \,A$ is given by tensoring with a
 bimodule $_{A}M_{B'}$ which is projective over $B'$. 
Lemma 2 implies that the composition
$mod \,k[T] \longrightarrow mod \,B' \longrightarrow mod \,A$ is given by tensoring with a bimodule which is free over $k[T]$. Thus from now on we can always assume that $A$ is basic and 
minimal representation-infinite and so it is given by quiver and relations i.e. $A=kQ/I$.

Suppose now that $A$ is not distributive. Then corollary 2 implies part ii) of theorem 6. The rank is bounded by the dimension of $A$ by part iv) of theorem 1.

\subsection{A lemma about tame concealed algebras}

The next result is used for distributive algebras. It can be verified  with the help of a well-known list, but we include a theoretical argument.
\begin{lemma}
 Let $A=kQ/I$  be  a tame concealed algebra and let $N$ be a homogeneous simple regular module. Then there is -up to duality - a simple projective $S=S(x)$ with $dim\,N(x)=1$ and 
 one has an exact sequence
$0 \longrightarrow S \longrightarrow N \longrightarrow V \longrightarrow 0$ such that $S$ and $V$ are orthogonal bricks with $dim \,Ext(V,S) = 2$. 
\end{lemma}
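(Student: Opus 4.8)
The plan is to exploit the very rigid structure of a tame concealed algebra $A=kQ/I$: it is derived-equivalent to a tame hereditary algebra, its module category has a preprojective component, a preinjective component, and a $\mathbf{P}^1(k)$-family of tubes, and the defect $\partial$ controls everything. A homogeneous simple regular module $N$ lies in a homogeneous tube, so $\partial N = 0$, $\dim\operatorname{End}(N)=1$, and $\dim\operatorname{Ext}(N,N)=1$, i.e. $q_A(\dim N) = 0$; equivalently $\dim N$ is the (positive) radical generator $\delta$ of the Tits form restricted to the relevant slice. First I would use the known shape of $\delta$ for each tame concealed type (or, to stay theoretical, argue via the defect): since $A$ is concealed, there is a tilting object whose summands are the preprojectives, and the minimal positive radical vector has a coordinate equal to $1$ at some source or sink of $Q$. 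After applying the duality $D$ if necessary, we may assume this extremal vertex $x$ is a source, so $S=S(x)$ is simple projective and $\dim N(x)=1$.

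Next I would build the exact sequence. Because $x$ is a source and $\dim N(x)=1$, the unit map $S(x)\otimes N(x)\to N$ picking out that one-dimensional space is an injection of $A$-modules (a projective cover on that vertex, injective since $N$ is a brick so nothing in $\operatorname{rad} N$ can kill it); call the cokernel $V$, giving $0\to S\to N\to V\to 0$. Then I check $S$ and $V$ are orthogonal bricks: $\operatorname{Hom}(S,V)=0$ because $V(x)=0$; $\operatorname{Hom}(V,S)=0$ and $\operatorname{Hom}(S,S)\hookrightarrow\operatorname{Hom}(S,N)$ is one-dimensional (from $\dim N(x)=1$ and $S$ simple projective, any map $S\to N$ lands in the one-dimensional $N(x)$); $\operatorname{End}(V)=k$ follows from $\operatorname{End}(N)=k$ together with the long exact $\operatorname{Hom}$-sequence and the vanishing of $\operatorname{Hom}(V,S)$ and $\operatorname{Hom}(S,V)$, which force $\operatorname{End}(V)\hookrightarrow\operatorname{End}(N)=k$. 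Finally $\operatorname{Hom}(V,N)$ and $\operatorname{Hom}(N,V)$-bounds plus $\operatorname{Ext}^1(N,N)=k$ feed into the same long exact sequences to pin down $\operatorname{Ext}(V,S)$.

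For the dimension count I would use the Euler form on $mod\,A$ (available since $A$ has finite global dimension, being concealed): $\langle V,S\rangle = \dim\operatorname{Hom}(V,S) - \dim\operatorname{Ext}^1(V,S) + \dim\operatorname{Ext}^2(V,S)$. The vertex $x$ being a source forces $\operatorname{Ext}^1(S,-)=\operatorname{Ext}^2(S,-)=0$ so $S$ has projective dimension... rather, I use that $S=P(x)$ is projective, hence $\operatorname{Ext}^i(V,S)$ is computed only from a projective resolution of $V$ and one checks $\operatorname{Ext}^{\ge 2}(V,S)=0$ because $\operatorname{pd}_A S = 0$ makes $\operatorname{Ext}^i(-,S)$ vanish for $i\ge ?$ — more cleanly: apply $\operatorname{Hom}(-,S)$, using $\operatorname{Ext}^1(N,S)$ and $\operatorname{Ext}^1(S,S)=0$ (as $S$ is projective, $\operatorname{Ext}^1(S,S)=0$) to the sequence $0\to S\to N\to V\to 0$, getting $\operatorname{Ext}^1(V,S)\cong\operatorname{Ext}^1(N,S)$, and similarly $\operatorname{Ext}^1(N,S)$ sits in a sequence coming from $0\to S\to N\to V\to 0$ applied in the first variable with $\operatorname{Hom}(N,-)$ and $\operatorname{Ext}^1(N,N)=k$. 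Chasing the two hexagons gives $\dim\operatorname{Ext}^1(V,S)=2$; concretely $q_A(\dim V) = q_A(\dim N - \dim S)$ expands, using $q_A(\dim N)=0$, $q_A(\dim S)=1$, and $\langle \dim S,\dim N\rangle + \langle\dim N,\dim S\rangle = (\dim S,\dim N)=2\cdot 1$ from $\dim N(x)=1$ and $x$ a source, to $q_A(\dim V) = -1$, and since $V$ is a brick this gives $\dim\operatorname{Ext}(V,V)$-type relations; combined with $\operatorname{Hom}(V,S)=0$ and $\operatorname{pd} S=0$ we read off $\dim\operatorname{Ext}(V,S)=2$.

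The main obstacle is the very first step: producing the extremal vertex $x$ with $\dim N(x)=1$ lying at a source or sink \emph{without} invoking the classification list of tame concealed algebras. The clean route is via the defect: $\delta=\dim N$ is the generator of $\ker\partial$ in the Grothendieck group, and one must show $\delta$ has a coordinate $1$ at a vertex that is a source or a sink of $Q$ — this is where a genuinely structural argument (e.g. analyzing the preprojective tilting and the Coxeter transformation, or reducing along one-point (co)extensions since every tame concealed algebra is obtained from a tame hereditary one by a sequence of such) is needed, and it is the part I expect to require the most care to make rigorous rather than "well-known".
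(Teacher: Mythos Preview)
Your outline has the right shape but contains real gaps and a few incorrect computations; the paper's proof proceeds differently at the key points.

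\textbf{The extremal vertex.} You correctly flag this as the crux, but you offer no argument. The paper does two things. First it invokes a result of Ovsienko (see the reference to von H\"ohne) that the positive radical generator $\delta=\dim N$ of the critical form $q_A$ has \emph{some} coordinate equal to $1$. Second, to upgrade ``some vertex'' to ``a sink or a source'', it argues by contradiction via a lemma of Vossieck: for any non-zero path $v$ in $Q$, the linear map $N(v)$ is either mono or epi (its kernel and cokernel are identified with $\operatorname{Hom}(C,N)$ and $D\operatorname{Hom}(N,\tau C)$ for an indecomposable $C$, and both being non-zero would force $C$ to be simultaneously preprojective and preinjective). A vertex with $\delta_x=1$ that is neither source nor sink would then sit on a path along which $N$ is a proper epi followed by a proper mono, contradicting that $N(\text{path})$ is mono or epi. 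This is a genuine idea you are missing, and your suggestions (defect, Coxeter, one-point extensions) do not substitute for it.

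\textbf{$V$ is an indecomposable brick.} Your claim that the long exact sequences ``force $\operatorname{End}(V)\hookrightarrow\operatorname{End}(N)$'' is in the wrong direction: applying $\operatorname{Hom}(-,V)$ gives $\operatorname{End}(V)\simeq\operatorname{Hom}(N,V)$ (since $\operatorname{Hom}(S,V)=0$), while applying $\operatorname{Hom}(N,-)$ gives only $\operatorname{End}(N)\hookrightarrow\operatorname{Hom}(N,V)$. The paper instead exploits that $N\simeq\tau N$ with $\operatorname{pd} N=\operatorname{id} N=1$, so that $[N,X]^1=[X,N]$ and $[X,N]^1=[N,X]$ for all $X$; then
\[
0=[N,N]-[N,N]^1=[N,S\oplus V]-[S\oplus V,N]=[N,V]-[S,N]=[N,V]-1,
\]
whence $[N,V]=1$. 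Since $N\twoheadrightarrow V$, this forces $V$ indecomposable; and $V$, being a proper quotient of a simple regular by a preprojective, is preinjective and therefore a brick.

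\textbf{The Ext computation.} Your Euler-form argument contains an error: since $\dim N$ generates the radical of $q_A$, one has $(\dim N,\dim S)=0$, not $2$, and hence $q_A(\dim V)=q_A(\dim N)+q_A(\dim S)-(\dim N,\dim S)=1$, not $-1$. Your long-exact-sequence alternative also slips: applying $\operatorname{Hom}(-,S)$ to $0\to S\to N\to V\to 0$ yields $[V,S]^1=[S,S]+[N,S]^1=1+[N,S]^1$, not $[V,S]^1\cong[N,S]^1$. The paper's route is to apply $\operatorname{Hom}(V,-)$ and use the same AR-duality trick: $[V,S]^1=[V,V]+[V,N]^1=1+[N,V]=2$.

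In short, the decisive tool you are missing throughout the second half is the self-duality $N\simeq\tau N$ (with $\operatorname{pd} N=\operatorname{id} N=1$), which converts all the Ext-dimensions you need into Hom-dimensions you already control; and for the first half you need the Ovsienko/Vossieck input rather than a vague appeal to defect or tilting.
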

\begin{proof} First of all we want to find a source or a sink $x$ in $Q$ with $dim \,N(x)=1$. This is obvious if $N$ is a quotient of an indecomposable projective. 
Thus we can assume that this is not true.

By well-known properties of tame concealed algebras \cite{ RT,SS}  the Euler-form  $q_{A}$ is critical and its radical is generated by the dimension-vector of $N$. A result of Ovsienko 
which is explained and proven with all details on the pages  314-315 of \cite{vH} enshures that at least
$dim\,N(x)=1$ for some point $x$. Assume that this never occurs for a sink or a source. Then we find in $Q$ a path $w= x_{0} \rightarrow x_{1} \rightarrow \ldots x_{i}=x  \ldots \rightarrow x_{p}$
with $0 < i < p$ and with $dim\,N(x_{0})>1<dim \,N(x_{p})$ but $dim \, N(x_{j})=1$ for  $1\leq j \leq p-1$.

 Now let $v$ be any path in $Q$ of length $\geq 1$ from $a$ to $b$ which is not in $I$.
 This induces a non-zero non-invertible homomorphism $P(b) \longrightarrow P(a)$ with indecomposable cokernel $C$ which is not isomorphic to $N$ since we have this case already settled.
Now the argument of Vossieck in \cite{V,RT}   says that the kernel of $N(v)$ is isomorphic to $Hom(C,N)$ and the cokernel to $DHom(N,DTrC)$. 
One of them is zero because otherwise $C$ is
preprojective as well as preinjective. Therefore $N(v)$ is mono or epi. In particular all arrows are represented in $N$ by a mono or an epi. We get that $N(w)$ is the composition of a 
proper epi followed by a proper mono and therefore non-zero but neither mono nor epi. Thus, up to duality, there is a sink $x$ with $dim \, N(x)=1$.

Let us look at the obvious exact sequence $0 \longrightarrow S=S(x) \longrightarrow N \longrightarrow V \longrightarrow 0$. Here $V$ is preinjective.
 Now $N\simeq DTrN$ has projective dimension and injective dimenion $ 1$ so that $[N,X]^{1}=[X,DTrN]=[X,N]$ and similarly $[X,N]^{1}=[N,X]$ holds for all $X$. Here and in the following $[X,Y]$
 resp. $[X,Y]^{1}$ denote the dimensions of $Hom(X,Y)$ resp. $Ext(X,Y)$.
From 
 $$0=[N,N]-[N,N]^{1}=[N,S\oplus V]-[N,S\oplus V]^{1}=[N,V]-[S,DTrN]=[N,V]-1$$ we get $1=[N,V]$. Therefore $V$ is indecomposable and $S$ and $V$ are orthogonal bricks. Finally
we apply $Hom(V, \,\,)$ to the exact sequence and we 
find $[V,S]^{1}=[V,V]+[V,N]^{1}=2$.

\end{proof}
\subsection{The proof for distributive algebras}

For distributive algebras we can even prove the following using some deep results:

\begin{theorem} Let $A$ be a distributive basic minimal representation-infinite 
algebra of dimension $d$. Then there is a representation embedding $$F: rep \,K_{2} \longrightarrow mod \, A$$ 
such that the composition $H=F\circ G$ is  induced  by an affine bimodule $_{A}M_{k[T]}$  of rank at 
most $max(30,4d)$.
 
\end{theorem}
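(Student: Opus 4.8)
The strategy is to reduce the statement for a general distributive basic minimal representation-infinite algebra $A$ to the case of a tame concealed algebra, and then to apply Proposition 2 to the orthogonal bricks produced by Lemma 6. First I would invoke the structure theory of distributive (in fact mild) algebras: since $A$ is distributive, basic and minimal representation-infinite, the machinery of ray categories and their coverings -- as developed for mild algebras in \cite{GR,BOsurvey} -- applies. The key input is that $A$ admits a Galois covering $\tilde A$ whose relevant finite convex subcategories are tame concealed, and more precisely that there is a tame concealed full convex subcategory, or a suitable degeneration argument, producing inside $\mathrm{mod}\,A$ (or via a representation embedding $\mathrm{mod}\,B \longrightarrow \mathrm{mod}\,A$ for a tame concealed $B$ that is a quotient or a ``piece'' of $A$) a homogeneous tube. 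Concretely one wants a representation embedding coming from a tame concealed algebra $B$ into $\mathrm{mod}\,A$ induced by an affine bimodule of controlled rank.

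\textbf{Key steps.} (1) Use the covering theory of distributive minimal representation-infinite algebras to locate a tame concealed algebra $B = kQ_B/I_B$ together with a representation embedding $\mathrm{mod}\,B \longrightarrow \mathrm{mod}\,A$ induced by tensoring with a bimodule that is finitely generated projective -- hence, after the reductions of Section 5.2, one that behaves well under Lemma 2. Bound $\dim B$ (and the relevant ranks) linearly in $d = \dim A$; this is where the constant $\max(30,4d)$ must be tracked. (2) In $\mathrm{mod}\,B$ pick a homogeneous simple regular module $N$ and apply Lemma 6: there is (up to duality) a simple projective $S = S(x)$ with $\dim N(x) = 1$ fitting into a short exact sequence $0 \to S \to N \to V \to 0$ with $S, V$ orthogonal bricks and $\dim \mathrm{Ext}_B(V,S) = 2$. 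Since $\mathrm{Hom}_B(S,V) = 0$ as well (orthogonality), the hypotheses of Theorem 3 are met with $U = S$, and $\mathrm{Hom}_B(U,V) = \mathrm{Hom}_B(S,V) = 0$. (3) Apply Theorem 3 with $n = 2$: there are two linearly independent classes $z_1, z_2$ in $\mathrm{Ext}_B(V,S)/J$ (here $J = 0$ since we are in dimension $2$ and the endomorphism rings are $k$), giving a strict representation embedding $\mathrm{rep}\,K_2 \longrightarrow \mathrm{mod}\,B$; moreover by part iii) of Theorem 3 its composition with the section-2.1 embedding $\mathrm{rep}\,L_1 = \mathrm{mod}\,k[T] \hookrightarrow \mathrm{rep}\,K_2$ is induced by an affine bimodule of rank $\dim(S \oplus V) = \dim N + 1$. (4) Compose with the embedding $\mathrm{mod}\,B \longrightarrow \mathrm{mod}\,A$ from step (1) and use Lemma 2 (parts i, iii, iv) to conclude that $H = F \circ G$ is induced by an affine bimodule over $k[T]$; multiply the two ranks and check the product is $\le \max(30,4d)$.

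\textbf{The main obstacle.} The genuinely hard part is step (1): producing, from the covering-theoretic structure of a distributive minimal representation-infinite algebra, an explicit representation embedding from a tame concealed algebra into $\mathrm{mod}\,A$ while keeping a linear bound on the relevant dimensions. This is where the ``deep results'' about mild algebras, ray categories and their coverings enter, and where the bookkeeping for the constant $\max(30,4d)$ has to be done carefully: one must control $\dim N(x)$ for the homogeneous simple regular $N$ over the tame concealed subquotient $B$, and then the rank of the composed bimodule is at most (rank from Lemma 2 / step (1)) $\times\,(\dim N + 1)$. The remaining steps (2)--(4) are essentially formal once Theorem 3, Lemma 6 and Lemma 2 are in hand; the verification that $\mathrm{Hom}_B(U,V)=0$ and that $J$ vanishes is immediate from orthogonality of the bricks and the fact that both have one-dimensional endomorphism ring over an algebraically closed field.
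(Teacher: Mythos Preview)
Your plan captures roughly half of the paper's argument but has two genuine gaps.

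\textbf{Missing case distinction.} The paper's proof splits into two cases, and your outline only covers the first. Case~1 applies when the universal covering $\tilde P$ of the ray category contains a finite convex subcategory $C$ with $B=kC$ tame concealed of type $\tilde E_{n}$ ($6\le n\le 8$) or $\tilde D_{m}$ with $m\le 2d+1$; here the argument does proceed via Lemma~5 and Theorem~3 as you sketch. But if no such bounded $C$ exists, the paper does \emph{not} try to locate a tame concealed piece and run Theorem~3. Instead it shows that $\tilde P$ contains arbitrarily long lines, deduces that $A$ is a zero-relation (in fact special biserial) algebra, finds a subline projecting to a primitive closed walk $w$ in $P$, and takes $B$ to be the path algebra of the resulting $\tilde A_{e}$ quiver. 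The embedding $L:\mathrm{rep}\,K_{2}\to\mathrm{mod}\,B$ is then the obvious identification with the full subcategory where all arrows except two are isomorphisms, and $H:\mathrm{mod}\,B\to\mathrm{mod}\,A$ is checked to be a representation embedding using the string/band classification. Your proposal does not anticipate this second, quite different, construction.

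\textbf{The push-down is not a representation embedding.} In Case~1 you assume a representation embedding $\mathrm{mod}\,B\to\mathrm{mod}\,A$ exists. The paper is explicit that the push-down functor $Q:\mathrm{mod}\,B\to\mathrm{mod}\,A$ is \emph{not} in general a representation embedding. What is shown instead is that the composite $F=Q\circ L$ is one: if $QLX\simeq QLY$ with $X\not\simeq Y$ indecomposable, then $LX$ and $LY$ are conjugate under some $g\neq 1$ in the fundamental group, forcing $g$ to permute a finite set of points in $\tilde P$ and hence have finite order---impossible in a free group. This step is essential and is missing from your outline.

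\textbf{Rank bookkeeping.} There is no multiplication of ranks: in both cases the functor $\mathrm{mod}\,B\to\mathrm{mod}\,A$ is induced by a bimodule free of rank~$1$ over $B$, so the final rank equals the rank of the $L\circ G$ bimodule. In Case~1 this is $\dim(S\oplus V)=\dim N$ (not $\dim N+1$; the exact sequence gives $\dim S+\dim V=\dim N$), bounded by $30$ for $\tilde E$ types and by $4d$ for $\tilde D_{m}$, $m\le 2d+1$. In Case~2 the rank is at most $2d$. This is where $\max(30,4d)$ comes from.
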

\begin{proof}

By theorem 2 of \cite{BOIND} the algebra 
 $A$ is isomorphic to the linearization of its ray category $P$ which in turn has a universal covering  $\pi:\tilde{P }\longrightarrow P$ where $\tilde{P}$ is interval-finite 
and the fundamental group $G$ is
 free. Then by \cite{GABU} $\tilde{P}$ is not locally representation-finite and we distinguish two cases as in \cite{BoCr}. Either any finite convex subcategory is representation-finite 
or not. In the second case we choose a convex subcategory $C$ which is not representation-finite with minimal number of points. Then $C$ defines  a critical algebra and so a tame concealed 
algebra by theorem 22 in \cite{BOsurvey}.

First assume that $\tilde{P}$ contains a finite convex subcategory $C$ such that $B=kC$ is a tame concealed algebra of type $\tilde{E}_{n}$ for $6 \leq n \leq 8$ or $\tilde{D}_{m}$ for 
$4 \leq m \leq 2d + 1$. By lemma 4 there is up to duality a pair $S,V$ of orthogonal bricks with simple $S$ and with $dim \, Ext(V,S)=2$. Theorem 3 gives us a strict embedding
 $L:rep\,K_{2} \longrightarrow mod \,B$ such that the composition with $F_{1}:mod \,k[T] \longrightarrow rep \,K_{2}$ is induced by an affine bimodule of rank $dim (S\oplus V)$ which is bounded by
$max(30,4d)$. 

We claim that the composition $F$  of $L$ with the restriction $Q:mod \,B \longrightarrow mod\,A$ of the push-down functor is the wanted functor. It is exact as the composition of two exact functors and 
it maps indecomposables 
to indecomposables since $L$ and $Q$ do. Thus let $X$ and $Y$ be non-isomorphic  indecomposables. Then $LX$ and $LY$ are non-isomorphic indecomposables in the essential image of $L$ in which $S$ and $V$ are the only indecomposables
 with 
support different from $B$.  If $QLX$ and $QLY$ are isomorphic  then  
$LX$ is conjugate to $LY$  under some $g\neq 1$ in the fundamental group and so their supports have the same cardinality. It follows that $B$ is the support of both modules and so $g$ leaves
 a finite set of points in $\tilde{P}$ invariant. Thus $g$ 
has finite order which is impossible in a free group. Here we have used several times Gabriels results from \cite{GABU}.

Now $Q$ is in general not a representation embedding but it is exact. By the observation of Eilenberg and Watts it is therefore given by the tensor-product with a bimodule $_{A}Q_{B}$ which 
is projective over $B$. Since $Q$ preserves the dimensions of all simples it is in fact free of rank 1 by part iv) of proposition 1.
Lemma 2 shows that
$F\circ G=
Q\circ L \circ F_{1}$ is induced by an affine bimodule of rank bounded by $max(30,4d)$.

If the assumptions of the first case are not satisfied then $\tilde{P}$ contains a tilted algebra of type $\tilde{D}_{m}$ for some $m \geq 2d+2 $ or sincere representation-finite algebras
 with 
arbitrarily many simples as  convex subcategories. In both cases $\tilde{P}$ contains a line of length $2d$ as a convex subcategory. Here and in the following text we freely use the definitions  
and results from
 section 7.1 in \cite{BOsurvey} especially 
 lemma 7.2 ( where $d$ should denote the dimension of $kP$, not the number of simples   ). One finds a 'subline'
  $ q \rightarrow z_{1} \ldots  \rightarrow z_{t-1} \rightarrow z_{t} \leftarrow \ldots  x' \leftarrow q'$ with 
$\pi(q)=\pi(q')$ and such that the two outer arrows $\alpha$ and $\beta$ satisfy $\pi(\alpha) \neq  \pi(\beta)$. If we choose such a subline of minimal length then all arrows $\gamma$
 occurring in the subline and pointing from the left to the right satisfy $\pi(\gamma) \neq \pi(\alpha)$. Thus the subline is mapped by $\pi$ to a closed walk $w$ in $P$ that is not a power
 of another walk. 
 
On the other hand the subline can be prolonged on both sides to an infinite periodic line and it follows that $A$ is a zero-relation algebra and no subpath of the subline is a zero-relation. 
In fact, $A$ is
is special biserial as shown by Ringel in \cite{RM}.  

Now let $B$ be the path-algebra of the quiver of type $\tilde{A}_{e}$ obtained by identifying the end-points $q,q'$ of our subline. Thus $B$ is again special biserial with essentially only
 one closed
 walk $w'$. We have an obvious 'push-down' functor $H:mod \,B \longrightarrow mod \,kP$ which is a representation embedding as one easily checks by using the fact that all indecomposables 
upstairs and downstairs are given
 by string and band-modules and that $w=\pi(w')$ is not a power. As in the first case one sees that $H$ is induced by tensoring with a bimodule $_{A}H_{B}$ which is free of rank $1$ over $B$. Our wanted functor $F$ is the 
composition of $H$ with the obvious embedding 
$L:rep \,K_{2} \longrightarrow mod \,B$ that identifies $rep \,K_{2}$ with the full subcategory of $mod\,B$ where all arrows except $\alpha$ and $\beta$ are represented by isomorphisms.
 One sees that $L\circ G$ is induced by an affine bimodule of rank at most $2d$ and so is $F\circ G$.

\end{proof}

\section{About  embedding classes and the partial orders between them}

\subsection{Representation-finite algebras and wild algebras}

In the representation-finite case the description of the embedding classes is easy.
\begin{proposition} For two representation-finite algebras $A$ and $B$ the following conditions are equivalent:
\begin{enumerate}
 \item $A$ and $B$ are embedding equivalent.
\item $A$ and $B$ are strictly embedding equivalent.
\item $A$ and $B$ are Morita-equivalent.
\end{enumerate}
Furthermore in that case any representation embedding $F: mod \,A \longrightarrow mod \,B$ is an equivalence.

\end{proposition}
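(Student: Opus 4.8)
The plan is to show the chain of implications iii) $\Rightarrow$ ii) $\Rightarrow$ i) $\Rightarrow$ iii), together with the final assertion that any representation embedding between representation-finite algebras is an equivalence. The implication iii) $\Rightarrow$ ii) is immediate: a Morita equivalence is in particular a strict representation embedding, so $A \leq_s B$ and $B \leq_s A$. The implication ii) $\Rightarrow$ i) is trivial since a strict representation embedding is a representation embedding. So the whole content sits in i) $\Rightarrow$ iii), and this is where I would spend the effort.

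For i) $\Rightarrow$ iii) I would argue as follows. Suppose $F:mod\,A \longrightarrow mod\,B$ is a representation embedding (we may assume $A$, $B$ basic, and by Proposition 1 that $F$ is tensoring with a progenerator bimodule $_BM_A$, so $F$ is the composite of a Morita equivalence $mod\,A \to mod\,A'$ with the restriction along a ring map $\phi:B\to \mathrm{End}(M_{A})\cong A'$; replacing $B$ by its basic version and $A$ by $A'$, we may assume $F$ is restriction along a surjection $\phi: B \twoheadrightarrow A$, since $F$ faithful forces $\ker\phi$ to act as zero — actually one must check $\phi$ is surjective, which follows because $M_A$ is a progenerator and $F$ reflects indecomposables). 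Now count indecomposables: since $F$ is exact, sends indecomposables to indecomposables, and reflects isomorphism classes, it induces an \emph{injection} from the (finite) set of iso-classes of indecomposable $A$-modules into the set of iso-classes of indecomposable $B$-modules. Hence $A$ has at most as many indecomposables as $B$. If $B \leq A$ as well, the reverse inequality holds, so the two finite numbers agree and $F$ induces a \emph{bijection} on iso-classes of indecomposables. But a surjection $\phi: B \twoheadrightarrow A$ with $A \neq B$ would kill some indecomposable $B$-module (e.g. one with support meeting $\ker\phi$ nontrivially — concretely, if $\ker\phi\neq 0$ then some indecomposable $B$-module is not annihilated by $\ker\phi$, so is not in the essential image of $F$, contradicting that the bijection is onto). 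Therefore $\ker\phi = 0$, $\phi$ is an isomorphism, and $F$ is (up to the Morita equivalence absorbed earlier) an equivalence; in particular $A$ and $B$ are Morita equivalent. This simultaneously proves i) $\Rightarrow$ iii) and the final clause.

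The main obstacle is the bookkeeping in reducing $F$ to a restriction functor along a surjection and then pinning down that the induced map on indecomposables is onto precisely when $\phi$ is an isomorphism. The cleanest way to handle ``onto'' is to avoid it: instead of tracking the essential image directly, just use cardinalities. Let $a$ and $b$ be the numbers of iso-classes of indecomposables of $A$ and $B$. A representation embedding $A \leq B$ gives an injection, so $a \leq b$; symmetrically $b \leq a$, hence $a = b$ and the injection $mod\,A \to mod\,B$ on indecomposables is a bijection. Then Proposition 1(v) presents $F$ as (Morita equivalence)$\circ$(restriction along $\phi: B\to \mathrm{End}(M_A)$), and since the essential image of restriction along a non-injective ring homomorphism omits modules supported on the kernel, the bijectivity forces $\phi$ injective; faithful flatness of the progenerator gives surjectivity, so $\phi$ is an isomorphism and $F$ is an equivalence. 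One should also remark that $A$ and $B$ being representation-finite is used exactly to make $a$ and $b$ finite, so that $a\le b\le a$ yields $a=b$ — for infinite cardinalities this step would fail, which is why the analogous statement is false in general.

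Finally, I would note that the displayed last sentence (``any representation embedding $F$ is an equivalence'') needs $A,B$ already known to be embedding equivalent \emph{or} at least representation-finite with the same number of indecomposables; as stated it is understood within the hypotheses of the proposition, and the argument above delivers it for free once iii) is established, since the equivalence $mod\,A\simeq mod\,B$ produced there can be taken to be $F$ itself after the cardinality count shows $F$ induces a bijection on indecomposables and hence (being exact and reflecting isomorphisms, with every indecomposable of $B$ in its image) is dense, full, and faithful.
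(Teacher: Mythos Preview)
Your counting argument to show that $A$ and $B$ have the same number of isomorphism classes of indecomposables is correct and is exactly how the paper begins. The gap is in the passage from ``$F$ is dense and faithful'' to ``$F$ is an equivalence'': you never establish that $F$ is \emph{full}. In your Proposition~1 factorisation $F = (\text{restriction along }\phi)\circ(\text{Morita})$ with $\phi:B\to A'=\mathrm{End}(M_A)$, your argument that $\phi$ is injective is fine (density of the restriction forces $\ker\phi$ to annihilate every $B$-module), but the assertion that ``faithful flatness of the progenerator gives surjectivity'' is not an argument---faithful flatness of $M_A$ over $A$ says nothing about the image of $B$ in $\mathrm{End}(M_A)$. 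Likewise, in your final paragraph you simply list ``dense, full, and faithful'' as if fullness were automatic; it is not. A faithful dense exact functor need not be full.

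The paper closes this gap with a short trick that uses the \emph{second} embedding $G:\mathrm{mod}\,B\to\mathrm{mod}\,A$. Listing the indecomposables $U_1,\dots,U_n$ of $A$, both $F$ and $G$ are faithful, and $GF$ permutes the $U_i$ up to isomorphism. Hence
\[
\sum_{i,j}\dim\mathrm{Hom}(U_i,U_j)\ \le\ \sum_{i,j}\dim\mathrm{Hom}(FU_i,FU_j)\ \le\ \sum_{i,j}\dim\mathrm{Hom}(GFU_i,GFU_j),
\]
and the two ends are equal because $\{GFU_i\}$ is just a relisting of $\{U_i\}$. Equality throughout forces each map $\mathrm{Hom}(U_i,U_j)\to\mathrm{Hom}(FU_i,FU_j)$ to be bijective, so $F$ is full. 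This is the missing idea: you must exploit $G$ a second time, not only for the cardinality of indecomposables but also to bound the Hom-dimensions from above. Your Proposition~1 route can be completed with this same inequality (it then gives $\dim A' = \dim B$ and hence $\phi$ surjective), but as written the surjectivity step is a genuine hole.
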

\begin{proof} Since $A$ and $B$ are embedding equivalent they have the same number $n$ of isomorphism classes of indecomposables. Let $F:mod \,A \longrightarrow mod \, B$ and 
$G: mod \,B \longrightarrow mod \,A$ be representation embeddings. We choose a list $U_{1}, \ldots , U_{n}$ of the indecomposable $A$-modules. Then $FU_{1},\ldots ,FU_{n}$ is a list 
of indecomposable $B$-modules and $GFU_{1}, \ldots ,GFU_{n}$ is again a list of indecomposable $A$-modules. Since $F$ and $G$ are faithful we get $$\sum_{i,j} dim\,Hom(U_{i},U_{j}) \leq \sum_{i,j} dim\,Hom(FU_{i},FU_{j}) \leq\sum_{i,j} dim\,Hom(GFU_{i},GFU_{j})$$ with equality of the end terms. It follows  that $F$ is an equivalence.
 
\end{proof}
Thus the embedding classes are as small as they can be in the representation-finite case.

In the other extreme we know already that all wild algebras are embedding equivalent and that all wild quiver algebras are even strictly embedding equivalent. 
The next result implies that there are many strict embedding classes of wild algebras because there are families of wild local algebras depending on arbitrarily many parameters.
\begin{proposition}
 Let $F:mod \,A \longrightarrow mod \,B$ be a strict representation embedding with $A$ and $B$ basic finite dimensional and $B$ local.  Then $A$ is local too. Furthermore $F$ is an equivalence
if $A$ and $B$  are strictly representation equivalent.
\end{proposition}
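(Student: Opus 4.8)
The plan is to exploit the structure theorem from Proposition 1: since $F$ is a strict representation embedding, it is in particular faithful and exact, so $F$ is isomorphic to $M\otimes_A(-)$ for a bimodule $_BM_A$ that is a progenerator over $A$, and $F$ factors as a Morita equivalence followed by restriction along the ring homomorphism $\phi:B\to\mathrm{End}(M_A)$. The key point I would extract first is that, because $F$ is \emph{full} (not merely faithful), the restriction-of-scalars part must itself be full; a restriction functor $\mathrm{mod}\,\mathrm{End}(M_A)\to\mathrm{mod}\,B$ along $\phi$ is full precisely when $\phi$ is surjective. So after replacing $A$ by the Morita-equivalent $\mathrm{End}(M_A)$ (which is still basic if we also absorb the Morita equivalence appropriately — here one must be a little careful, since $M_A$ need not have rank $1$), we are in the situation $B\twoheadrightarrow A$, i.e. $A$ is a quotient of $B$.

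Granting that $A$ is a quotient algebra of the local algebra $B$, the first assertion is immediate: a quotient of a local ring is local, so $A$ is local. For the second assertion, suppose in addition that $A$ and $B$ are strictly embedding equivalent, so there is also a strict representation embedding $G:\mathrm{mod}\,B\to\mathrm{mod}\,A$. The natural invariant to compare is the dimension of the algebra, or better the length of the regular representation: by the first part applied to $G$ (note $A$ is now known to be local, so the hypotheses of the proposition apply symmetrically), $B$ is a quotient of $A$ and $A$ is a quotient of $B$, hence $\dim_k A=\dim_k B$ and the two surjections $B\twoheadrightarrow A$, $A\twoheadrightarrow B$ are both isomorphisms. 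Tracing this back, the bimodule $_BM_A$ realizing $F$ must have rank $1$ over $A$, so the Morita equivalence is trivial on the basic level and $F$ itself is (isomorphic to) the equivalence $\mathrm{mod}\,A\to\mathrm{mod}\,B$ induced by the algebra isomorphism $B\cong A$.

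The main obstacle is the rank/genericity issue hidden in the phrase "replace $A$ by $\mathrm{End}(M_A)$": one has to be sure that the fullness of $F$ really does force $\phi:B\to\mathrm{End}(M_A)$ to be surjective, and one has to control what happens to "basic" and to the rank of $M_A$ under this replacement. Part iv) of Proposition 1 is the right tool: it says $M_A=\bigoplus P_i^{m_i}$ with $m_i=\dim FS_i$, so the rank being $1$ is equivalent to $F$ preserving the dimensions of all simples, and once $B$ is local there is only one simple, so $\dim FS$ is the single quantity to pin down. I would therefore organize the argument around the numerical identity $\dim_k B=\dim_k A$ forced by having surjections in both directions, deduce $\dim FS=1$ hence $M_A$ free of rank $1$, and conclude. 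The only genuinely delicate step is checking that fullness of the composite $F$ forces fullness, equivalently surjectivity, of the restriction component — for this one uses that $\mathrm{Hom}_B$ computed after restriction along a non-surjective $\phi$ is strictly larger than $\mathrm{Hom}_{\mathrm{End}(M_A)}$ on a suitable pair of modules (e.g. regular representations), contradicting the injection-that-is-also-surjection coming from fullness of $F$ together with the Morita equivalence being full and faithful.
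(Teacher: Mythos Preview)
Your approach via Proposition~1 (factor $F$ as a Morita equivalence followed by restriction along $\phi:B\to\mathrm{End}(M_A)$) is reasonable, but the crucial claim that fullness of the restriction functor forces $\phi$ to be surjective is \emph{false in general}. A counterexample: let $\phi$ be the inclusion of the upper-triangular matrices into $M_2(k)$. The unique simple $M_2(k)$-module $k^2$ restricts to an indecomposable $C'$-module with scalar endomorphisms, so restriction is full on all of $\mathrm{mod}\,M_2(k)$, yet $\phi$ is not surjective. Your sketched argument (``$\mathrm{Hom}_B$ is strictly larger on regular representations'') thus does not go through as stated.

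The claim \emph{does} hold in the situation at hand, but proving it requires exactly the insight the paper uses directly and which your proposal is missing: for $B$ local with unique simple $T$, the module $T$ is the \emph{only} $B$-module with one-dimensional endomorphism algebra. Indeed, for any nonzero $U$ the chain of nonzero maps
\[
U \twoheadrightarrow U/\mathrm{rad}\,U \twoheadrightarrow T \hookrightarrow \mathrm{soc}\,U \hookrightarrow U
\]
produces a nonzero endomorphism that is not invertible unless $U\simeq T$. The paper applies this immediately: fullness gives $\mathrm{End}_B(FS)\simeq k$ for every simple $A$-module $S$, hence $FS\simeq T$; reflection of isomorphism classes then forces $A$ to have a single simple, so $A$ is local. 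This also yields $\dim FS=1$, so by Proposition~1(iv) the bimodule $M_A$ is free of rank~$1$, giving $\dim FA=\dim A$; since $\mathrm{Hom}_B(FA,T)\simeq\mathrm{Hom}_A(A,S)\simeq k$, the module $FA$ is local and hence a quotient of $B$, and your dimension comparison for the second assertion then goes through. In short, your strategy can be salvaged, but only by inserting this ``unique brick'' characterization --- and once you have it, the detour through surjectivity of $\phi$ becomes unnecessary.
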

\begin{proof}
  Let  $T$ be the only simple  $B$-module. The chain of non-zero 
homomorphisms $U \rightarrow U/rad U \rightarrow T \rightarrow soc U \rightarrow U$ shows that $T$ is the only $B$ -module with one dimensional endomorphism algebra. Thus we have 
$FS\simeq T$ 
for any simple $A$-module $S$ and so there is only one up to isomorphism. From $Hom_{A}(A,S)\simeq Hom_{B}(FA,T)\simeq k$ we see that $FA$ is local whence a quotient of $B$ and so 
$dim B\geq dim FA=dimA$. If there is also a strict representation embedding in the other direction we obtain $dimA=dimB$ whence $FA=B$. Then $F$ is am equivalence by proposition 1.

\end{proof}

For arbitrary algebras there is not much known about the partial orders $\leq$ and $\leq_{s}$ except that the class of $k$ is for both orderings the smallest class whereas the class of 
$kK_{3}$ is  the biggest. Also the rough subdivision representation-finite/tame polynomial growth/tame exponential growth/wild is preserved.

\subsection{Tame concealed algebras}
 The next proposition gives some results for tame concealed algebras which we always suppose to  be basic.

\begin{proposition}Let $B$ be tame concealed  and $A$ basic representation-infinite. let $F:mod \, A \longrightarrow mod\,B$ be a representation embedding. Then the following holds:
\begin{enumerate}
                    \item Considering $A$ as a finite $k$-category let $A'$ be the full subcategory supported by all points $x$ such that $FP(x)$ is preprojective. 
Then $A'$ has an infinite preprojective component.
\item If $A$ is minimal representation-infinite it is tame concealed. Then the regular simples  $E_{1},E_{2},\ldots E_{n}$ in a tube $T$ of rank $n$ in $mod\,A$ are mapped into a tube 
 of rank $m\geq n$.
\item If $F$ is a strict embedding and $A$ is tame concealed then  different tubes are mapped into different tubes, the tubular type can only increase and for the null-roots  of $A$ and $B$
 we have $\mid n_{A}\mid \leq \mid n_{B} \mid$. Here $\mid x \mid $ denotes the sum of all components of the vector $x$.
\item If $F$ is strict, $A$ is tame concealed and $A$ and $B$ are strictly embedding equivalent then $F$ is an equivalence. 
                   \end{enumerate}

\end{proposition}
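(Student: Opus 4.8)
The plan is to run all four parts on a common toolkit. For a basic tame concealed $B$ recall the shape of $mod\,B$: the indecomposables fall into a preprojective component $\mathcal{P}$, a ${\bf P}^{1}(k)$-family $\mathcal{R}=\coprod_{\lambda}\mathcal{T}_{\lambda}$ of stable tubes (all but finitely many homogeneous) separating $\mathcal{P}$ from the preinjective component $\mathcal{Q}$, a tube $\mathcal{T}_{\lambda}$ of rank $m_{\lambda}$ being an abelian serial category whose $m_{\lambda}$ regular simples have dimension vectors summing to the generator $n_{B}$ of the radical of $q_{B}$. I use freely that $Hom(\mathcal{R}\cup\mathcal{Q},\mathcal{P})=0=Hom(\mathcal{Q},\mathcal{R})$, that $add\,\mathcal{P}$ is closed under submodules and extensions, that an indecomposable lies in $\mathcal{P}$ (resp. $\mathcal{Q}$) precisely when only finitely many indecomposables map into it (resp. out of it), and the combinatorics of a tube of rank $m$: a brick there has regular length $\leq m$, pairwise orthogonal modules have pairwise different regular socles (so at most $m$ of them fit), a brick carrying a self-extension has regular length exactly $m$, and two different modules of regular length $m$ admit a nonzero mutual homomorphism. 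Finally a representation embedding $F$ is exact and faithful, so by Proposition~1 it is $FX\cong {}_{B}M_{A}\otimes_{A}X$ with $M_{A}=FA=\bigoplus_{x}(e_{x}A)^{m_{x}}$ and $m_{x}=dim\,FS_{x}\geq 1$; hence $dim_{k}FX=\sum_{x}m_{x}\,dim_{k}X(x)\geq dim_{k}X$, so $F$ never lowers the $k$-dimension and carries all $A$-modules of a given dimension vector to $B$-modules of one fixed $k$-dimension. When $F$ is strict it moreover preserves all $Hom$-dimensions and injects $Ext$.

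For (i) I would study $\mathcal{C}=\{M\in mod\,A:FM\in add\,\mathcal{P}\}$: it is closed under submodules and extensions (since $F$ is exact and $add\,\mathcal{P}$ is), and if $Hom(N,M)\neq0$ with $M\in\mathcal{C}$ then the image of $FN\to FM$ is a submodule of $FM$, hence preprojective, so the indecomposable $FN$ surjects onto a nonzero preprojective and therefore (as $Hom(\mathcal{R}\cup\mathcal{Q},\mathcal{P})=0$) is itself in $\mathcal{P}$, giving $N\in\mathcal{C}$; since $F$ reflects isomorphism classes, each $M\in\mathcal{C}$ has only finitely many indecomposables mapping into it. That $\mathcal{C}$ is nonempty and that, transported along the restriction functor to the full subcategory $A'$, it forces $A'$ to be representation-infinite with its only infinite component preprojective is the hard core of (i); here one uses that the representation-infinite $A$ receives $rep'\,K_{2}$ (Theorem~6) together with the global structure of $mod\,B$, and this is the single most delicate step of the proposition. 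For (ii) first note $A$ is tame, since a wild $A$ would push, via $F$, families of indecomposables depending on arbitrarily many parameters into the domestic category $mod\,B$, which is impossible; and if $A$ is minimal representation-infinite I would show $A'=A$: were $FP(z)$ regular, sitting in a tube $\mathcal{T}_{\mu}$, then $Hom(P(z),M)\neq0$ for almost all indecomposable $M$ would force $FM\in\mathcal{T}_{\mu}\cup\mathcal{Q}$ for almost all $M$, which is impossible because a ${\bf P}^{1}(k)$-family of $A$-modules of a fixed dimension vector would give infinitely many pairwise non-isomorphic $B$-modules of one $k$-dimension whereas $\mathcal{T}_{\mu}\cup\mathcal{Q}$ has only finitely many indecomposables of any $k$-dimension; and $FP(z)$ cannot be preinjective since it maps to infinitely many indecomposables. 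Hence $A'=A$, so by (i) $A$ has an infinite preprojective component, whence by the Happel--Vossieck classification $A$ is tame concealed. For the remaining clause: the regular simples $E_{1},\dots,E_{n}$ of a rank-$n$ tube of $A$ satisfy $Hom(E_{i},E_{j})=\delta_{ij}k$ and $Ext(E_{i},E_{i-1})\neq0$; each $E_{i}$ is regular over $A$, so $FE_{i}$ is regular over $B$ (infinitely many preprojectives map in and infinitely many preinjectives out, and $F$ is faithful and reflects isomorphism classes), the surviving extensions force all $FE_{i}$ into one tube $\mathcal{T}_{\mu}$, and a computation in the serial category $\mathcal{T}_{\mu}$ (using that $F$ reflects isomorphism classes and injects $Ext$) gives that the rank of $\mathcal{T}_{\mu}$ is at least $n$.

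Part (iii) exploits strictness: $F$ being full, for $X,Y$ in different tubes of the tame concealed $A$ one has $Hom(FX,FY)=0=Hom(FY,FX)$ while $Ext$ between regular simples transports isomorphically. As in (ii) each $A$-tube is mapped into a single $B$-tube; if the $A$-tube has rank $r$ its image contains $r$ pairwise orthogonal modules $FE_{i}$, which therefore have pairwise different regular socles, so the target tube has rank $\geq r$ — the tubular type can only grow. If a homogeneous $A$-tube and any other $A$-tube met the same rank-$m$ tube of $B$, the image of the homogeneous regular simple — a brick with a self-extension, hence of regular length exactly $m$ — would be non-orthogonal to the image of a regular simple of the other tube, contradicting fullness; the case of two exceptional $A$-tubes is settled by a further short argument with the cyclic $Ext$-structure of a rank-$m$ tube. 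So different tubes go to different tubes. Finally $|n_{A}|$ is the $k$-dimension of a homogeneous regular simple $R$ of $A$; $FR$ is a brick carrying a self-extension, hence of regular length exactly $m$ in its tube, hence of $k$-dimension $|n_{B}|$, and $dim_{k}FR\geq dim_{k}R$ yields $|n_{A}|\leq|n_{B}|$.

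For (iv) apply (iii) to $F$ and to the companion strict embedding $G:mod\,B\to mod\,A$: the tubular types dominate one another, so they coincide, and likewise $|n_{A}|=|n_{B}|$; thus $A$ and $B$ are tame concealed of the same type, $F$ matches exceptional tubes bijectively and rank-preservingly (a self-map $\sigma$ of a finite multiset with $x\leq\sigma(x)$ must be the identity), and hence sends homogeneous tubes to homogeneous tubes. On each tube $F$ is then a full faithful exact representation embedding between serial categories of equal rank, and the distinct-regular-socles bound leaves no room to omit an indecomposable, so it is dense there, i.e. an equivalence on every tube; moreover $F$ carries $\mathcal{P}^{A}$ into $\mathcal{P}^{B}$ (it preserves ``only finitely many indecomposables map in'') and $\mathcal{Q}^{A}$ into $\mathcal{Q}^{B}$. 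The remaining — and main — obstacle is the density of $F$ on the preprojective component (the preinjective one being dual): I would transport a complete slice of $\mathcal{P}^{A}$ to a family of modules in $\mathcal{P}^{B}$ with the same $Hom$-matrix and, using that the Euler forms of $A$ and $B$ agree up to ${\bf Z}$-equivalence together with the matching of the regular parts already obtained, argue that this family is itself a complete slice of $\mathcal{P}^{B}$; given that, $F$ is full, faithful, exact and dense, hence an equivalence. So the two places to sweat are the infinite preprojective component in (i) and the density on $\mathcal{P}^{B}$ in (iv).
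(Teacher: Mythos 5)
Your framework is right and parts (ii) and (iii) are essentially the paper's arguments (the brick-with-self-extension in a rank-$m$ tube having $k$-dimension $\mid n_{B}\mid$, pairwise orthogonal modules having distinct regular socles, etc.). But there are two genuine gaps, both of which you flag yourself, and at both places the paper proceeds differently.

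For (i) you set up the subcategory $\mathcal{C}$, observe it is closed under submodules and extensions, and then write that $A'$ being representation-infinite with an infinite preprojective component is ``the single most delicate step,'' alluding to Theorem 6 and the structure of $mod\,B$. The paper does not use Theorem 6 here at all. It fixes a point $x$ with $FP(x)$ not preprojective; then $FP(x)$ lies in a tube or in the preinjective component, so for each $d$ only finitely many indecomposable $B$-modules of dimension $\leq d$ receive a nonzero map from it. Faithfulness together with the fact that $F$ does not decrease (and only boundedly increases) dimension gives that $U(x)\neq 0$ for only finitely many indecomposable $A$-modules $U$ of bounded dimension. Now BT2 supplies a bound $d$ with infinitely many indecomposables of dimension $\leq d$; almost all of them vanish at every $x\notin A'$, so $A'$ is representation-infinite. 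The infinite preprojective component then comes from an explicit Auslander--Reiten argument: $A'$ is closed under successors of arrows, so $P(x)$ for $x\in A'$ is projective in $A$ and has a bound on chains of irreducible maps ending in it; one shows by induction that the component of $P(x)$ consists of translates of projectives, contains no oriented cycle, and hence is preprojective. None of this is in your sketch.

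For (iv) you again defer the core: you propose to transport a complete slice of $\mathcal{P}^{A}$ and compare Euler forms, and you call density on the preprojective component ``the remaining --- and main --- obstacle.'' The paper avoids this entirely with a short dimension count that is exactly what the inequality in (iii) is set up to produce. Applying (iii) to $F$ and to a strict $G:mod\,B\to mod\,A$ gives $\mid n_{A}\mid=\mid n_{B}\mid$. Since $\sum n'_{i} = \sum n_{i}\, dim\,FS_{i}$ and the null root of a tame concealed algebra is sincere, all $dim\,FS_{i}=1$, i.e.\ $F$ (and likewise $G$) sends simples to simples; counting simples gives $r=r'$ and, after renumbering, $FS_{i}\cong S'_{i}$. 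Fullness gives $Hom_{A}(P_{i},S_{j})\simeq Hom_{B}(FP_{i},S'_{j})$, so the indecomposable $FP_{i}$ is a quotient of $P'_{i}$; summing $dim\,P_{i}\leq dim\,FP_{i}\leq dim\,P'_{i}$ yields $dim\,A\leq dim\,B$, symmetry gives equality, hence $FP_{i}\cong P'_{i}$ and $FA\cong B$, and Proposition 1 (Eilenberg--Watts) makes $F$ a Morita-equivalence. Your slice-transport route, even if it can be completed, is considerably heavier and not needed.
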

\begin{proof} Let $x$ be a point such that $FP(x)$ is not preprojective. Then for each $d$ there is only a finite number of indecomposable $B$-modules of dimension $\leq d$ with $Hom(FP(x),V)\neq 0$. 
Since $F$ is faithful and does not decrease the total dimension one has $U(x)\neq 0$ only for finitely many indecomposable $A$-modules of dimension $\leq d$. But by BT 2  there exists 
such a $d$ 
with infinitely many indecomposables of smaller dimension. Since $A$ has only finitely many points, $A'$ is representation-infinite. If $x$ is in $A'$ and $x \rightarrow y$ an arrow 
in the quiver of $A$ then $y$ is also in $A'$. We identify the $A'$-modules with the $A$-modules vanishing outside $A'$. Then the projective $P(x)$ to a point $x$ in $A'$ is also projective 
in $A$ and such a projective has a bound on the length of chains of irreducible morphisms ending in $P(x)$. Let $C$ be the connected component of the Auslander-Reiten quiver of 
$A'$ containing  $P(x)$. We claim that $C$ is preprojective. First of all any module is the translate of a projective. This is clear for at least one module. Assume it holds for $U$  and let 
$U \rightarrow V$ or $V \rightarrow U$ be an irreducible map. If all $DTr^{i}V$ are defined there are  $n,n'$ and an irreducible map $DTr^{n'}V \rightarrow DTr^{n}U=P(y)$ and then
 $P(y)$ has no bound on the chains of irreducible maps ending there.  Thus $C$ consists only of translates of projectives. An oriented cycle in $C$ can be translated often enough 
to contain a projective and this is impossible again. Thus $C$ is a preprojective component. Since $A'$ is representation-infinite one of these components is infinite.

If $A$ is minimal representation-infinite we have $A=A'$ and there is only one preprojective component whence $A$ is tame concealed. Regular indecomposables lie on oriented cycles of irreducible
 maps and so they are mapped to regular modules. We can assume $n\geq 2$. There are $n$ isomorphism classes of indecomposables of regular 
length $n$ in the tube $T$ connected by a cycle of non-zero maps. These are mapped to $n$ non-isomorphic indecomposables of the same regular length in an a tube of rank $m$ and 
the claim follows.

It is clear that different tubes are mapped to different tubes if $F$ is strict.
Furthermore there is at least one homogeneous regular simple $N$ that is mapped to a homogeneous regular simple $N'$. Now let $S_{1},S_{2},  \ldots S_{r}$ be a list of simples in 
$mod \,A$ and let $n$ be the null-root of $A$. Using analogous notations for $B$ we find 
$$\sum _{i=1}^{r'}n'_{i} = dim N' = dim FN = \sum _{i=1}^{r} n_{i}dim FS_{i} \geq \sum _{i=1}^{r}n_{i}.$$ A strict representation embedding $G: mod \,B \longrightarrow mod \,A
$ gives the reversed inequality and it follows that $F$ and $G$ map simples to simples whence $r=r'$ and after renumbering $FS_{i}=S'_{i}$. For the projective cover $P_{i}$ of 
$S_{i}$ it follows from $Hom_{A}(P_{i},S_{j}) \simeq Hom_{B}(FP_{i},S'_{j})$ for all $j$ that $FP_{i}$ is a homomorphic image of the projective cover $P'_{i}$ of $S'_{i}$. Thus we find
$dim\,B \geq dim \,A$. By symmetry we have equality and we obtain $FP_{i}\simeq P'_{i}$. This holds for all indices whence $FA\simeq B$. By the remark of Eilenberg and Watts $F$ 
is a Morita-equivalence.

\end{proof}

It is easy to give examples of tame concealed algebras with $A\leq_{s} B$ that are not induced by an inclusion of the quivers
 but the complete partial order $\leq_{s}$ is
 not clear.
\subsection{Self-embeddings of the  classical Kronecker-modules}

For any finitely generated algebra $A$ one has the three monoids $$M(A) \subseteq S(A) \subseteq E(A)$$ consisting of the endo-functors $F: mod\,A \longrightarrow mod \,A$ that 
are equivalences resp. strict representation embeddings resp. representation embeddings. Our previous results show $M(A)=E(A)$ for all representation-finite algebras and $M(A)=S(A)$ for 
all tame concealed algebras as well as for all  wild local algebras. It is easy to see that for any wild algebra $A$ the two monoids $M(A)$ and $E(A)$ are different and that $E(A)$ is huge.
Here in this section we describe $E(kK_{2})$  and we apply the result in the last section to determine the $\leq$-minimal embedding classes.
 The results depend on the characteristic of $k$.

We need some explicit calculations with representations of $K_{2}$ and we fix some notations. In $kK_{2}$ we take as a basis the two idempotents $e_{1}$ and $e_{2}$ and the two arrows $\lambda,\rho$
 from 2 to 1. For the indecomposable  $P(i)$ with dimension-vector $(i+1,i)$  we take at the point 2 a vectorspace with basis $x_{1},x_{2},\ldots, x_{i}$ and 
at the point 1 
a vectorspace with basis $y_{1},y_{2},\ldots ,y_{i+1} $ and  and we define  $\lambda (x_{j})=y_{j}$ and $\rho (x_{j})=y_{j+1}$ for all $j$. For $P(0)$ we choose a base vector $z$. 
In $Hom(P(0),P(1))$ 
we take the basis $l,r$ defined by $lz=y_{1}$ and $rz=y_{2}$. The indecomposable preinjective with dimension-vector $(i,i+1)$ is denoted by $I(i)$.

For $rep\,K_{2}$ there are only two reflection functors $S^{+}$ resp. $S^{-}$ which both are endo-functors that fix the regular indecomposables and act on the preprojective or preinjective indecomposables
 as shifts to the left resp. to the right. Here one has $S^{+}P(0)=0$ and $S^{-}I(0)=0$. We denote for arbitrary $i,j \geq 0$ by $rep\,K_{2}(i,j)$ the full subcategory of $rep\,K_{2}$ 
consisting of all objects that do not contain one of the modules $P(l)$ with $l<i$ or $I(m)$ with $m<j$ as a direct summand. Then $S^{+}$ induces for $i>0$ and arbitrary $j$ an equivalence between 
$rep\,K_{2}(i,j)$ and $rep\,K_{2}(i-1,j+1)$ and there is an analogous statement for $S^{-}$.

The following two lemmata about the existence and uniqueness of self-embeddings of $rep \,K_{2}$ are of independent interest. In the proofs we are working with companion matrices occurring e.g.
in the rational normal form RNF of a matrix. Recall 
that the RNF to a Jordan bloc of size n with eigenvalue $x$ is the companion matrix
$B((X-x)^{n})$.
A  short algorithmic treatment of the RNF which is suitable for beginners  is given in \cite{BRNF}.

\begin{lemma} Let $n\geq 1$ be a natural number. 
\begin{enumerate} 
 \item There is an exact functor $F_{n}:rep\,K_{2} \longrightarrow rep \,K_{2}$ with $FP(0)=P(0)$, $FP(1)=P(n),F(l )(z)=y_{1},F(r)(z)=y_{n+1}$.
\item  $F_{n}$ reflects the isomorphism classes.
\item $F_{n}$ preserves indecomposability iff $n=1$ or $ char\,k=p>0$ and $n=p^{m}$ for some $m\geq 1$.
\end{enumerate}
\end{lemma}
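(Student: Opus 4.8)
We need to construct a self-functor of $rep\,K_2$ that does to $P(0)\hookrightarrow P(1)$ what the "blow-up" does, and then analyze when it preserves indecomposability. I would set up $F_n$ explicitly on objects as follows. A representation $W=(W_1,W_2;\phi_\lambda,\phi_\rho)$ of $K_2$ is a pair of spaces with two maps $W_2\to W_1$. Mimicking the picture $P(1)=(k^2,k;l,r)$ getting stretched to $P(n)=(k^{n+1},k^n;\lambda,\rho)$, I would define $F_nW$ to have $(F_nW)_2 = W_1^{\,n-1}\oplus W_2$ and $(F_nW)_1 = W_1^{\,n}$, with the new $\lambda$ sending the copies of $W_1$ down by the "identity shift" and the last block acting by $\phi_\lambda$, and the new $\rho$ shifting the other way and acting by $\phi_\rho$ on the last block — exactly the $(n+2)$-by-$(n+2)$ staircase shape of section 2.2 specialized to the one-loop case, or equivalently the pattern already used in Theorem 3 with the $z_i$'s replaced by a single derivation. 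The cleanest way to phrase this is probably: $F_n$ is (up to the identifications of section 2.3 / Theorem 3) the functor $rep\,K_2 = rep'K_2$-type construction built from the pair of bricks $(P(0),P(0))$ realizing $\mathrm{Ext}$ of dimension... — but since we want a concrete map I would just write the block matrices and define $F_n$ on morphisms diagonally. Checking $F_n$ is $k$-linear and exact is then immediate (it is built from direct sums and fixed maps), and the values $F_nP(0)=P(0)$, $F_nP(1)=P(n)$, $F_n(l)(z)=y_1$, $F_n(r)(z)=y_{n+1}$ are read off by plugging $P(0)=(k,0)$ and $P(1)=(k^2,k;l,r)$ into the formula. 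This proves (i).

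For (ii), I would show $F_n$ reflects isomorphism classes by exhibiting a retraction of the data: from $F_nW$ one recovers $W_1$ and $W_2$ and the two structure maps canonically (e.g. $W_1$ is an explicit subquotient cut out by the shift part of $\lambda$ and $\rho$, the way Theorem 2(ii) recovered $M$ from $FM$). Concretely, the image of $\rho^{\,n-1}\lambda^{\,?}$-type composites isolates the $W_2$-block and reading the residual action gives back $\phi_\lambda,\phi_\rho$; hence any isomorphism $F_nW\to F_nW'$ forces one between $W$ and $W'$. This is the same style of argument as in the proof of Theorem 1 (lifting isomorphisms through projective covers / through a canonical filtration) and in Theorem 2, so I expect it to be a short filtration-chase rather than a genuine obstacle.

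**The main point, part (iii).** This is where the characteristic enters and where the real work is. The "only if" direction: I would test $F_n$ on a decomposable module whose image is forced to be indecomposable only under the stated numerical condition. The natural test object is a regular module, in particular the simple homogeneous regular $R_x$ at $x\in k$, or a Jordan block $J_{n}(x)$ worth of it — equivalently a $k[T]$-module of the form $k[T]/(T-x)^?$ pushed into $rep\,K_2$ via $G$. The block-matrix description of $F_n$ on such a module produces, for the two Kronecker maps, a pair of matrices that are conjugate to companion matrices of the polynomials $X^n$ and $(X-1)^n$ (or $\lambda$ vs. $\mu$ after base change), and $F_nW$ decomposes exactly when a certain polynomial — the resultant/characteristic combination governing when $\lambda^n$ and $\mu^n$ fail to "interlock", i.e. when $(\lambda^n,\mu^n)$ defines a decomposable pencil — has a root, i.e. when $\gcd$ of the relevant $n$-th power maps is nontrivial. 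The upshot should be: $F_n$ of an indecomposable regular stays indecomposable iff the map "$n$-th power" on the relevant one-parameter group is injective, and that map is $\zeta\mapsto\zeta^n$ on $k^*$ (or $\mathbf P^1$), which is injective precisely when $n$ is not divisible by the characteristic if $\mathrm{char}\,k=p>0$, and — when combined with the further requirement that the preprojective/preinjective parts also behave — collapses to $n=1$ or $n=p^m$. I would handle the preprojective case $P(j)$ and preinjective case separately but these reduce to the same companion-matrix computation after applying the reflection functors $S^\pm$ of the paragraph above (which commute suitably with $F_n$ up to shifting indices), so the regular computation is the crux.

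**Expected obstacle.** The genuine difficulty is pinning down the exact arithmetic condition "$n=1$ or $n=p^m$". The direction $F_n$ preserves indecomposability when $n$ is $1$ or a prime power in characteristic $p$ will come from the fact that $X^{p^m}$ is additive/a field endomorphism (Frobenius), so it is a ring-theoretic, not merely set-theoretic, injection — and that extra structure is what is needed to conclude the endomorphism ring of $F_nW$ stays local, not just that $F_n$ is injective on objects. Conversely, for any $n$ with a second prime factor (including all $n\ge 2$ in characteristic $0$), I would produce an explicit idempotent in $\mathrm{End}(F_nW)$ for a suitable indecomposable $W$ by writing down, via the companion-matrix form, a projector onto a proper summand coming from a nontrivial factorization of $X^n$ into coprime pieces compatible with the $\rho$-shift. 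Getting these two computations to meet cleanly — showing local endomorphism ring upstairs exactly in the Frobenius case — is the step I would budget the most care for; everything else (linearity, exactness, reflecting iso-classes) is routine bookkeeping of the kind already carried out for Theorems 1, 2 and 3.
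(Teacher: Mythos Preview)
Your sketch has a concrete error in part (i) and a conceptual muddle in part (iii) that together would prevent the argument from going through.

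\textbf{Part (i): the construction is wrong.} Your formula $(F_nW)_2=W_1^{\,n-1}\oplus W_2$, $(F_nW)_1=W_1^{\,n}$ gives $F_nP(0)$ the dimension vector $(n,n-1)=P(n-1)$, not $(1,0)=P(0)$ as the lemma requires. The functor $F_n$ is \emph{not} a Brenner-type staircase blow-up. The paper builds it as $M\otimes_A -$ for a specific bimodule $M_A=e_1A\oplus (e_2A)^{2n-1}$ with an explicit left $B$-action; the key effect, and the organizing formula for everything that follows, is that on the regular modules $L(Q)$ (those with $\lambda=\mathrm{id}$ and $\rho$ the companion matrix of $Q$) one has
\[
F_n\,L(Q)\ \simeq\ L\bigl(Q(X^n)\bigr),
\]
i.e.\ substitution $X\mapsto X^n$ in the polynomial. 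This is proved by writing $L(Q)$ as the cokernel of an explicit map $P(0)^q\to P(1)^q$, applying $F_n$, and rereading the cokernel in a suitable basis.

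\textbf{Part (iii): the arithmetic is cleaner and you have one condition backwards.} Once you know $F_nL((X-x)^k)=L((X^n-x)^k)$, writing $n=p^m n'$ with $\gcd(n',p)=1$ (and $p=1$ in characteristic $0$) gives the factorization
\[
(X^n-x)=\prod_{i=1}^{n'}(X-\zeta^{i}x'')^{p^{m}},
\]
so $F_nL((X-x)^k)$ splits into exactly $n'$ indecomposable summands. Indecomposability for all regulars is therefore equivalent to $n'=1$, i.e.\ $n=1$ or $n=p^m$. No ``local endomorphism ring'' argument or resultant is needed. (Your sentence ``injective precisely when $n$ is not divisible by the characteristic'' is the wrong way round: the map $\zeta\mapsto\zeta^n$ on $k^*$ is injective iff $n$ is a $p$-power, since there are $n'$ distinct $n$-th roots of unity.) The preprojectives and preinjectives are handled by the direct computation $F_nP(i)=P(ni)$ and $F_nI(i)=I(n(i+1)-1)$, which also yields part (ii): distinct indecomposables go to modules with disjoint sets of indecomposable summands.
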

\begin{proof} We construct for $A=B=kK_{2}$ a bimodule $_{B}M_{A}$ with $_{B}M_{A}\otimes \,-\simeq F$. As a right-module we take $M_{A}= e_{1}A \oplus ( e_{2}A) ^{2n-1}$. The multiplication of
$e_{1},e_{2},\lambda, \rho$ from the left 
 is given by the following elements $E_{1},E_{2},S,R$ in $End (M_{A})$ that we describe by their action on the obvious $k$-basis $B$ consisting of $e_{1},\lambda, \rho, e_{2}(j),1\leq j \leq 2n-1$ of $M_{A}$.
We set $E_{2}(e_{2}(2j-1))=e_{2}(2j-1)$ for $1\leq j \leq n$ and $E_{2}(x)=0$ for the other base vectors of $M_{A}$. The identity is the sum of $E_{1}$ and $E_{2}$. We define
$S(e_{2}(1))=\lambda, S(e_{2}(2j-1))=e_{2}(2j-2)$ for $2\leq j \leq n$ and $S(x)=0$ for the other base vectors.  Finally we have $R(e_{2}(2j-1))=e_{2}(2j)$ for $ 1 \leq j \leq n-1$,
 $R(e_{2}(2n-1))=\rho$ and $R(x)=0$ for the other base vectors. Then $FP(0)=P(0)$ is obvious and  $M_{A}\otimes Ae_{2}$ has as a $k$-basis the $x\otimes e_{2}$ with $e_{1}\neq x \in B$. With respect to 
this basis one verifies $FP(1)=P(n)$ and the assertion about $F(l)$ and $F(r)$ immediately.

Next we determine $FU$ for any indecomposable $U$.  For the normed polynomial $Q=a_{0} + a_{1}X + \ldots +a_{q-1}X^{q-1}+ X^{q}$  we denote   by $L=L(Q)$ the representation with $L(1)=L(2)=k^{q}$
 where $\lambda$ acts by the identity matrix and $\rho$ by the companion matrix $B(Q)$ of $Q$. 
define  $\alpha:P(0)^{q} \rightarrow P(1)^{q}$ by the matrix 
\[\left [
\begin{array}{cccccc}
r&0&0& ...&0&a_{0}l\\
-l&r&0&... & 0&a_{1}l\\
0&-l&r&...&0&a_{2}l\\
...&...&...&...&...&...\\
...&...&...&...&r&a_{q-2}l\\
...&...&...&0&-l&r+a_{q-1}l\\

\end{array}
\right ].
\]

Then the cokernel has the residue classes of the $x_{1}(j),y_{1}(j)$ ,$1 \leq j \leq q$ as a basis and with respect to this basis $\lambda$ is the identity and $\rho$ is $B(Q)$.
Then $FL(Q)$ is isomorphic to the cokernel of $F(\alpha)$ which is isomorphic to $L(Q(X^{n}))$ as one sees by choosing in the cokernel the residues of $x_{j}(i)$, $1 \leq j \leq n$, 
$1 \leq i \leq q$ and their images under $\lambda$ as a basis. For the indecomposable $L((X-x)^{k}))$ we get $L((X^{n}-x)^{k}))$. This is indecomposable for $x=0$. For $x\neq 0$ we write
 $n=p^{m}n'$ where 
$p$ is the characteristic of $k$ if this is positive or $p=1$ and $p$ and $n'$ are coprime. Let $x'$ be the unique scalar with $(x')^{p^{m}}=x$ and let $(x'')^{n'}$ be $x'$. 
Finally let $\zeta$
 be a generator for the group of all $y$ with $y^{n'}=1$.  Then we get $$(X^{n}-x)=(X^{n'}-x')^{p^{m}}= \prod_{i=1}^{n'}(X-\zeta^{i}x'')^{p^{m}}$$ and consequently
$L((X^{n}-x))^{k})= \oplus_{i=1}^{n'} L((X-\zeta^{i}x'')^{p^{m}k})$. The regular indecomposables where $\rho$ is the identity and $\lambda$ a nilpotent Jordan-bloc are mapped to indecomposables under $F$.
We conclude that for regular indecomposables $U\not\simeq V$ the modules $FU$ and $FV$ have only regular indecomposable direct summands but none in common. One verifies easily that
$FP(i)=P(ni)$ and $FI(i)=I(n(i+1)-1)$ hold for all $i$.

Thus $F$ reflects always the isomorphism classes and it preserves indecomposability only for $n=1$  or for $char \,k=p>0$ and $n=p^{m}$.

\end{proof}

\begin{lemma} Let $F:rep\,K_{2} \longrightarrow rep\,K_{2}$ be a representation embedding. Then we have:
 \begin{enumerate}
  \item $F$ is an equivalence for  $char\,k=0$.
\item For $char\,k=p>0$ we have $F=(S^{-})^{i}GF_{p^{m}}$. Here $G:rep \,K_{2} \longrightarrow rep\,K_{2}$  is an equivalence, $i < p^{m}$ and $F_{p^{m}}$ is the functor 
introduced in lemma 6. Here all regular indecomposables of regular length a multiple of $p^{m}$ lie in the essential image.
 \end{enumerate}

\end{lemma}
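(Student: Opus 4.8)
\medskip

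The plan is to analyze a representation embedding $F:rep\,K_2\longrightarrow rep\,K_2$ through its effect on the preprojective component, exploiting the fact (Proposition 5, applied to the tame concealed algebra $A=B=kK_2$) that $F$ maps the preprojective component into itself and the preinjectives into themselves, and regular modules into regular modules of the same regular length with different simples going to different tubes, up to increase of tubular type — but for $K_2$ all tubes are homogeneous, so regular simples go to regular simples. First I would pin down $FP(0)$: since $P(0)$ is the unique simple projective and $F$ is exact and faithful, $FP(0)$ must again be preprojective, indecomposable, with no proper submodule other than $0$ inside the exact subcategory, so $FP(0)=P(j)$ for some $j\ge 0$; then composing with $(S^+)^{\,j}$ (which kills nothing relevant and is an equivalence on $rep\,K_2(j,0)$) we may assume $FP(0)=P(0)$. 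Next I would look at $FP(1)$: from the almost split sequence or simply from $\dim\mathrm{Hom}(P(0),P(1))=2$ together with faithfulness, $FP(1)$ is an indecomposable preprojective $P(n)$ with $\dim\mathrm{Hom}(P(0),P(n))=n+1\ge 2$, so $n\ge 1$; after composing with a suitable power of $S^-$ we can absorb the remaining shift ambiguity so that, up to an equivalence $G$ and a power $(S^-)^i$, the functor agrees with a functor sending $P(0)\mapsto P(0)$, $P(1)\mapsto P(n)$ and sending $l,r$ to $y_1,y_{n+1}$ — that is, it equals $F_n$ of Lemma 6 on these generating data.

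\medskip

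The key step is then a rigidity argument: the full subcategory generated under extensions, images and isomorphisms by $P(0)$ and $P(1)$ is all of $rep\,K_2$ (every Kronecker representation is a cokernel of a map $P(0)^a\to P(1)^b$, as recorded in the construction of $L(Q)$ before Lemma 6), and an exact functor is determined up to isomorphism by its values on $P(0),P(1)$ together with the induced map $\mathrm{Hom}(P(0),P(1))\to\mathrm{Hom}(FP(0),FP(1))$. So once the normalized $F$ agrees with $F_n$ on that data, it is isomorphic to $F_n$. Now Lemma 6 part (iii) tells us $F_n$ preserves indecomposability — which a representation embedding must — precisely when $n=1$ or $\mathrm{char}\,k=p>0$ and $n=p^m$. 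Hence in characteristic $0$ we are forced to $n=1$, so $F$ is, up to the equivalence and the shift, the identity, and since $S^-$ and $S^+$ are mutually inverse equivalences on the relevant subcategories, $F$ itself is an equivalence; this gives part (i). In characteristic $p>0$ we get $n=p^m$ and $F=(S^-)^i G F_{p^m}$ with $G$ an equivalence and, by the normalization, $0\le i<p^m$ (a larger $i$ would be reabsorbed by a shift into a different $m$); and from the explicit description in Lemma 6 of $FL((X^n-x)^k)$ one reads off that every regular indecomposable of regular length divisible by $p^m$ is in the essential image, giving part (ii).

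\medskip

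The main obstacle I anticipate is the bookkeeping of the two shift ambiguities and proving that the normalization can be carried out consistently — i.e., that after shifting so $FP(0)=P(0)$ the value $FP(1)$ is genuinely forced to be $P(n)$ with the stated action on $l$ and $r$ (one must check the induced map on $\mathrm{Hom}(P(0),P(1))\cong k^2$ lands as $\langle y_1,y_{n+1}\rangle$ up to change of basis, using that $F$ reflects isomorphism classes and that $\mathrm{Hom}(FP(0),FP(1))=\mathrm{Hom}(P(0),P(n))$ is $(n+1)$-dimensional), and that the leftover shift is exactly a nonnegative power of $S^-$ of exponent $<p^m$. A secondary point requiring care is justifying that an exact functor out of $rep\,K_2$ is determined up to isomorphism by its restriction to $\{P(0),P(1)\}$ and the action on their Hom-space; this follows from the Eilenberg–Watts description in Proposition 1 (the functor is tensoring with a bimodule, and the bimodule is recovered from $FP(0)$, $FP(1)$ and the structure maps), but it should be stated cleanly. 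Everything else is a direct appeal to Lemma 6 and Proposition 5.
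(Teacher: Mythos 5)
The high-level skeleton of your plan matches the paper: normalize $FP(0)=P(0)$ by composing with $(S^+)^j$, identify $FP(1)=P(n)$, invoke Eilenberg--Watts to reduce to the data $(FP(0),FP(1),\mathrm{Hom}\text{-map})$, and finally appeal to Lemma~6(iii). But there is a genuine gap at what you label "the main obstacle" and dismiss as bookkeeping: you assert that, after composing with a shift and an equivalence $G$, the images of $l,r$ can be brought to $y_1,y_{n+1}$ so that the normalized $F$ \emph{equals} $F_n$, and \emph{then} you deduce from Lemma~6(iii) that $n=1$ or $n=p^m$. This order of argument does not work, because the normalization you want is simply false for general $n$: the two maps $Fl,Fr$ span a two-dimensional subspace of the $(n+1)$-dimensional space $\mathrm{Hom}(P(0),P(n))$, and there is no reason an automorphism of $K_2$ (i.e.\ a $GL_2$-change of basis) should move that plane onto $\langle y_1,y_{n+1}\rangle$. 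What the paper actually proves is that the constraint "$F$ is a representation embedding" forces \emph{both} the normalization and the arithmetic condition on $n$ simultaneously: one first shows that no nonzero combination $af+bg$ can send $z$ into $\langle y_2,\dots,y_n\rangle$ (because the cokernel must be regular indecomposable, hence $\lambda$ or $\rho$ injective on it), then uses $GL_2$ to get $f(z)=y_1+\sum p_i y_i$ and $g(z)=\sum q_j y_j + y_{n+1}$ with a priori nonzero middle coefficients, then computes the cokernel of $bf+g$ as a companion matrix $C$ and compares its characteristic polynomial with $(X-x(b))^n$ for every $b$. It is this comparison that kills the middle coefficients \emph{and} rules out $\mathrm{char}\,k=0$ (via an impossible identity $\left(\tfrac{bp_n+q_n}{n}\right)^n=b$ for all $b$) \emph{and} forces $n'=1$ in $n=p^m n'$. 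None of these three conclusions can be extracted by first claiming the normalization and then citing Lemma~6(iii); the normalization \emph{is} the theorem.

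A secondary remark: the part you flag as needing care --- that an exact functor out of $rep\,K_2$ is determined by its values on $P(0),P(1)$ and the $\mathrm{Hom}$-map --- is in fact the unproblematic part, handled cleanly by Proposition~1 (the bimodule $F(kK_2)=FP(0)\oplus FP(1)$ with its right action determines $F$). The difficulty sits entirely in the companion-matrix/characteristic-polynomial computation that you have left out, and which is the actual content of the paper's proof.
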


\begin{proof}
Set $X=FP(0)$,$Y=FP(1)$, $f=Fl$ and $g=Fr$. Since $Y$ has infinitely many pairwise non-isomorphic quotients and $Hom(X,Y) \neq 0$ we have $X=P(i)$ and $Y=P(i+n)$ for some $i$ and  $n\geq 1$. The inequality
 $i <  n$ holds because $X^{2}=FP(0)^{2}$ embeds into $Y=FP(1)$. Replacing $F$ by 
$(S^{+})^{i}F$ we can assume $X=P(0)$. For $n=1$ the functor $F$ is an equivalence by proposition 1. Thus we assume $n\geq 2$.
 For any $(a,b) \neq (0,0)$ the cokernel of $h=h(a,b)=af+bg$ is indecomposable regular. If there is an $h$ with 
$h(z)= p_{2}y_{2}+p_{3}y_{3}+ \ldots p_{n}y_{n}$
 then we have in the cokernel $ \lambda (p_{2}x_{2}+p_{3}x_{3}+ \ldots p_{n}x_{n})=0$ and $ \rho (p_{2}x_{1}+p_{3}x_{2}+ \ldots p_{n}x_{n-1})= 0$. But for a 
regular indecomposable $\lambda$ or $\rho$ is injective.

Now $Gl_{2}(k)$ acts in an obvious way by automorphisms on $kK_{2}$ inducing auto-equivalences of $rep\,K_{2}$.
Applying such an appropriate equivalence $G^{-1}$ we can arrange  that $f(z)= y_{1} + p_{2}y_{2}+ \ldots p_{n}y_{n}$ and  $g(z)= q_{2}y_{2}+q_{3}y_{3}+  \ldots y_{n+1}$ 
for appropriate scalars $p_{i},q_{j}$.

Then for all $bf+g$, $ b \in k$, we can take the residues 
of $x_{1}, \ldots x_{n}$ and of $y_{1},y_{2}, \ldots y_{n}$ as a basis of the cokernel. With respect to these bases $\lambda$ acts by the identity matrix and $\rho$ by a companion matrix 
$C$ with 
 $C_{i,n}=-(bp_{i}+  q_{i})$ for $2\leq i \leq n$ and $C_{1,n}= -b$. The cokernel is indecomposable iff $C$ is similar to a Jordan bloc with eigenvalue $x=x(b)$. Comparing the 
characteristic polynomials we find 
\[X^{n} + (bp_{n}+q_{n})X^{n-1} +  \ldots +(bp_{i+1}+q_{i+1})X^{i} + \ldots ...(bp_{2}+q_{2})X + b =(X-x)^{n}.\]
 For $char \,k =0$ we  compare the coefficients of $X^{n-1}$ and of $X^{0}$  and we obtain for all $b$ the relation $(\frac{bp_{n}+q_{n}}{n})^{n}=b$ which is impossible.
Thus we have $char\,k=p>0$ and we write $n=p^{m}n'$ with $n'$ coprime to $p$. We get $(X-x)^{n}=(X^{p^{m}}-x^{p^{m}})^{n'}$. For $n'>1$ one finds a contradiction as in characteristc 0 
by comparing now the coefficients of $X^{p^{m}(n'-1)}$ and $X^{0}$. So we have $n'=1$. Then $bp_{i+1}+q_{i+1}$ vanishes for all $b$ and all $1\leq i \leq n-1$ i.e. $f(z)=y_{1}$ 
and $g(z)=y_{n+1}$.
Altogether we find $G^{-1}(S^{+})^{i}F=F_{p^{m}}$ or 
$F=(S^{-})^{i}GF_{p^{m}}$. Reversely $GF_{p^{m}}$ maps $I(0)$ to $I(p^{m}-1)$ and so $(S^{-})^{i}GF_{p^{m}}$ is a representation embedding for all $i < p^{m}$. The last assertion is evident
from the proof of lemma 6.

\end{proof}

\subsection{The minimal  embedding classes of the representation-infinite algebras}

\begin{lemma}
 Let $A$ be a basic minimal representation-infinite algebra. Then there is a representation embedding $F:rep K_{2} \longrightarrow mod \,A$ or $A$ is isomorphic to $k[X,Y]/(X,Y)^{2}$.
\end{lemma}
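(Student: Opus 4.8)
The plan is to dichotomize a basic minimal representation-infinite algebra $A = kQ/I$ according to whether $A$ is distributive or not, exactly as in the structure already set up for Theorem~6. First I would recall that by Corollary~2, if $A$ is not distributive there is already a representation embedding $rep'\,K_2 \longrightarrow mod\,A$, and since $rep\,K_2$ embeds into $rep'\,K_2$ (via any reflection-type or obvious inclusion identifying $rep\,K_2$ with the Kronecker modules without the simple injective summand inside the larger category), we obtain the desired $F: rep\,K_2 \longrightarrow mod\,A$. So the non-distributive case is immediate from what precedes.

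The substance is therefore the distributive case. Here I would invoke Theorem~8: for a distributive basic minimal representation-infinite algebra $A$ there is a representation embedding $F: rep\,K_2 \longrightarrow mod\,A$. But Theorem~8 presupposes that $A$ is \emph{not} the exceptional algebra, so the real task is to show: if $A$ is distributive, basic, minimal representation-infinite and admits \emph{no} representation embedding from $rep\,K_2$, then $A \cong k[X,Y]/(X,Y)^2$. I would run the same covering-theoretic machinery used in the proof of Theorem~8: pass to the ray category $P$ with its universal cover $\pi: \tilde P \to P$, invoke that $\tilde P$ is not locally representation-finite (by Gabriel, via \cite{GABU}), and split into the two cases — either every finite convex subcategory of $\tilde P$ is representation-finite or not. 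In the second case one gets a tame concealed convex subcategory and Theorem~8's argument produces the embedding, contradiction. In the first case $\tilde P$ contains tilted algebras of type $\tilde D_m$ or sincere representation-finite algebras with arbitrarily many simples, hence a long line as a convex subcategory, and again the $\tilde A_e$-pushdown construction of Theorem~8 yields the embedding. The only way the construction in Theorem~8 can fail to even start is when $\tilde P$ is \emph{too small} to contain the needed convex subcategories, i.e.\ when the line that must appear has length bounded below by a quantity that would force $A$ to have too large a dimension; tracing through when all these constructions degenerate forces $A$ to be local with radical square zero and exactly two arrows, that is, $A \cong k[X,Y]/(X,Y)^2$.

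The key steps, in order, are: (1) reduce to $A$ basic minimal representation-infinite, given by quiver and relations; (2) dispatch the non-distributive case by Corollary~2 plus the inclusion $rep\,K_2 \hookrightarrow rep'\,K_2$; (3) in the distributive case, apply the ray-category/universal-covering analysis and observe that whenever $\tilde P$ contains any of the needed convex subcategories (tame concealed, tilted $\tilde D_m$, or long line) the embedding $rep\,K_2 \longrightarrow mod\,A$ exists by Theorem~8; (4) conclude that the only remaining possibility — where none of these subcategories can occur — pins down $A$ up to isomorphism as $k[X,Y]/(X,Y)^2$, which genuinely has no such embedding (in characteristic $0$ it is itself a $\leq$-minimal class, as announced in the introduction, and a fortiori admits no embedding from the strictly larger $rep\,K_2$).

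The main obstacle I expect is step~(4): verifying that $k[X,Y]/(X,Y)^2$ is the \emph{unique} algebra escaping all the constructions. One must show that a distributive minimal representation-infinite $A$ whose universal cover contains neither a tame concealed convex subcategory nor an arbitrarily long line must already be so small that its quiver is $L_2$ with relations $(X,Y)^2$; since $A$ is basic and minimal representation-infinite, the Kronecker quiver $K_2$ with zero relations and the radical-square-zero local algebra on two loops are essentially the only distributive candidates of minimal dimension, and $kK_2$ obviously does admit the embedding ($rep\,K_2$ embeds into $rep\,K_2$ as the identity), leaving $k[X,Y]/(X,Y)^2$ as the sole exception. A secondary subtlety is confirming that $k[X,Y]/(X,Y)^2$ indeed admits \emph{no} representation embedding from $rep\,K_2$ at all; this will follow from the dimension/homomorphism-counting methods of Section~6 (as in Proposition~10 and the surrounding results on local algebras) together with the fact that $F_1: mod\,k[T] \to rep\,K_2$ is already strict and proper, so any embedding out of $rep\,K_2$ would be at least as strong as one out of $mod\,k[T]$, which the characteristic-$0$ minimality of $k[X,Y]/(X,Y)^2$ forbids.
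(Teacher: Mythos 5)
Your dichotomy (distributive vs.\ not) matches the paper's, but both branches of your argument go wrong in ways that leave a real gap.

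\textbf{The non-distributive branch.} You claim to compose Corollary~2, which gives an embedding $rep'\,K_{2} \longrightarrow mod\,A$, with an inclusion $rep\,K_{2} \hookrightarrow rep'\,K_{2}$. No such inclusion exists: by definition $rep'\,K_{2}$ is the \emph{full subcategory} of $rep\,K_{2}$ of representations without the simple injective as a summand, so the honest inclusion goes the other way. There is in fact no representation embedding $rep\,K_{2} \to rep'\,K_{2}$ in characteristic $0$: composed with the inclusion $rep'\,K_{2}\subset rep\,K_{2}$ it would give a self-embedding of $rep\,K_{2}$ missing $I(0)$, which Lemma~7(i) forbids. (In characteristic $p$ such a self-embedding does exist, e.g.\ $F_{p}$, but the lemma must hold in every characteristic.) So Corollary~2 does \emph{not} dispose of the non-distributive case, and this branch is not ``immediate from what precedes.''

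\textbf{The distributive branch and the location of the exception.} You place the exceptional algebra $k[X,Y]/(X,Y)^{2}$ inside the distributive analysis, but it is \emph{not} distributive: the one-dimensional ideals $(aX+bY)$ form an infinite family, so its two-sided ideal lattice is infinite (Jans's criterion). Consequently Theorem~7 applies to \emph{every} distributive basic minimal representation-infinite algebra without exception, and all the work you propose to do inside the distributive branch (degenerating covers, short lines, etc.) is aimed at an empty set.

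\textbf{Where the paper actually goes.} The paper disposes of the distributive case by quoting Theorem~7 outright, and then works on the non-distributive case \emph{directly} via Kupisch's criterion rather than Corollary~2: some $A(x,x)$ is not uniserial, or all $A(x,x)$ are uniserial but some bimodule $A(x,y)$ is not. In each subcase the minimality of $A$ is used to produce a pair of modules $U,V$ with $\dim Ext(V,U)\geq 2$ and suitable vanishing of Hom-spaces, feeding into Theorem~3 (the extension construction) to build $F: rep\,K_{2} \to mod\,A$. The single configuration where this construction is obstructed is when $A(x,x)\cong k[X,Y]/(X,Y)^{2}$ and $Hom(U,V)\neq 0$ forces $V=S_{x}$, which pins $A$ down as $k[X,Y]/(X,Y)^{2}$. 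So the exceptional algebra emerges from the non-distributive analysis via Theorem~3, not from a breakdown of the covering machinery behind Theorem~7.
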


\begin{proof}
 By theorem 7 we can assume that $A$ is not distributive. We consider $A$ as a finite category. First let $x$ be a point such that $A(x,x)$ is not uniserial. Since $A$ is minimal 
representation-infinite so is $A(x,x)$ and $A(x,x) \simeq k[X,Y]/(X,Y)^{2}$ follows. One has an exact sequence 
$$0 \longrightarrow S_{x}^{2} \longrightarrow P_{x} \longrightarrow V \longrightarrow 0$$ of $A$-modules where $U:=S_{x}$ resp. $P_{x}$ are the simple resp.the indecomposable projective
 to the point $x$. Here $U$ and $V$ are bricks and we have $dim\,Ext(V,U)=2$. For $Hom(U,V)=0$ we obtain the wanted functor $F$ by theorem 3. For $Hom(U,V)\neq 0$ we get $V=S_{x}$ and 
$A\simeq k[X,Y]/(X,Y)^{2}$.

Next assume that all $A(x,x)$ are uniserial but  some $A(x,y)$ is not a uniserial bimodule. Let $B$ be the full subcategory of $A$ supported by $x$ and $y$. The minimality of $A$ implies the
 minimality of $B$ and there are only two possibilities. Either $B$ is isomorphic to $kK_{2}$ or the quiver of $B$ has a loop $\alpha$ at $x$, a loop $\gamma$ at $y$ and an arrow $\beta$ from 
$x$ to $y$. The relations are given by $\alpha^{2},\gamma^{2}$ and $\gamma \beta \alpha $. 

In the first case we look at the exact sequence of $A$-modules 
$$0 \longrightarrow S_{y}^{2} \longrightarrow P_{x} \longrightarrow V \longrightarrow 0.$$ Here $U=S_{y}$ and $V$ are orthogonal bricks and so theorem 3 gives the wanted functor $F$.
The second case is more interesting. Choose some paths $a,b,c$ in the quiver of $A$ that induce $\alpha,\beta,\gamma$ in $B$. Then $ba$ and $cb$ are two paths delivering two linearly
 independent elements in the left and right socle of $A$.  Let $U$ denote the submodule of $P_{x}$ that is generated by $b$. Then there is an exact sequence
$$0 \longrightarrow S_{y}\oplus U \longrightarrow P_{x} \longrightarrow V \longrightarrow 0.$$

This induces an isomorphism $Hom(S_{y} \oplus U,U) \simeq Ext(V,U)\simeq k^{3}$. Now $U$ and $V$ satisfy $Hom(U,V)=Hom(V,U)=0$ and $End \,U \simeq End \,V \simeq k[X]/(X^{2})$. To get 
the wanted embedding $F$ 
by applying theorem 3 we have 
to verify that $dim Ext(V,U)/J$ has dimension 2. Here $J$ is the radical of the $End \,U-End \,V$-bimodule $Ext(V,U)$. We consider the exact restriction functor $G$ from $mod\,A$ to 
$mod \,B$. It induces the following diagram:

\setlength{\unitlength}{0.8cm}
\begin{picture}(9,3)
\put( 3,2.5){$ Hom_{A}(S_{y}\oplus U,U)$}  \put(9,2.5){  $Ext_{A}(V,U)$}\put(3,0.5){ $Hom_{B}(GS_{y}\oplus GU,GU)$} \put(9,0.5){ $ Ext_{B}(GV,GU)$}
\put(5,2){\vector(0,-1){0.8}} \put(10,2){\vector(0,-1){0.8}} 
\put(8,2.7){\vector(1,0){0.8}} \put(8,0.7){\vector(1,0){0.8}} 
\end{picture}

Here the horizontal maps are the connecting homomorphisms 
which are isomorphisms by construction. The vertical maps are induced by $G$ and so the left map is an isomorphism and the right map an $End\,U-End\,V$-bimodule homomorphism. 
Since the connecting
homomorphisms are given by push-outs the diagram commutes. Now for $B$ an easy but lengthy direct calculation with pushouts and pullbacks shows that $Ext_{B}(GV,GU)$
has one-dimensional radical as an $End_{B}\,GU -End_{B}\,GV$-bimodule. 

\end{proof}

The two categories $mod \,kK_{2}$ and $mod \,k[X,Y]/(X,Y)^{2}$ are stably equivalent and they are related by two well-known  $k$-linear functors  namely by
$$G:mod \,k[X,Y]/(X,Y)^{2} \longrightarrow mod\,kK_{2} \mbox{ given by}\, GM=(M/radM,rad M,x,y)$$ where $x$ and $y$ are
 induced by the multiplications with $X$ and $Y$ and by $$H:mod\,kK_{2} \longrightarrow mod \,k[X,Y]/(X,Y)^{2} \mbox{ defined by} \,H(V,W,\alpha,\beta)=V\oplus \,W$$ with $X$ acting by $\alpha$ and $Y$ acting by $\beta$.
These functors have the following properties that are easy to verify: $HGM\simeq M$ holds for all $M$ - and so each indecomposable $B$-module is hit by an indecomposable $A$-module - 
whereas $GHU \simeq U$ holds only for all indecomposables $U \not\simeq (0,k,0,0)$.
$H$ is exact whereas  an exact sequence $0 \rightarrow X \rightarrow Y \rightarrow Z \rightarrow 0$ is  mapped under $G$ to an exact sequence only if $rad X= X \cap rad Y$ or equivalently the only 
simple $B$-module $S$ 
is not a direct summand of $X$. Epimorphisms are always preserved by $G$. Thus both are almost representation embeddings, but $G$ is not exact and $H$ maps the two non-isomorphic 
simple modules of $kK_{2}$ to the simple  of $k[X,Y]/(X,Y)^{2}$. In the sequel we will use both functors.

\begin{lemma}
 Suppose that $k$ has characteristic 0 and that $A$ is a representation-infinite algebra. Then we have:
\begin{enumerate}
 \item Any representation embedding $F:mod \,A \longrightarrow mod\,kK_{2}$ is an equivalence.
\item No representation embedding $F:mod \,kK_{2} \longrightarrow mod \,k[X,Y]/(X,Y)^{2}$ exists.
\item Any representation embedding $F:mod \,A \longrightarrow mod\,k[X,Y]/(X,Y)^{2}$ is an equivalence.
\end{enumerate}

\end{lemma}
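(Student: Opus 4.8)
The plan is to treat the three statements as a linked chain, getting maximal mileage from the two preparatory lemmas on self-embeddings of $rep\,K_{2}$ (Lemma 7 in particular) together with the two transfer functors $G,H$ between $mod\,kK_{2}$ and $mod\,k[X,Y]/(X,Y)^{2}$ introduced just before the statement. First I would dispose of part i). Given a representation embedding $F:mod\,A\longrightarrow mod\,kK_{2}$, I want to produce a representation embedding $rep\,K_{2}\longrightarrow mod\,A$ and conclude. By Lemma 11 either such an embedding $E:rep\,K_{2}\longrightarrow mod\,A$ exists, or $A\simeq k[X,Y]/(X,Y)^{2}$ (after reducing to the basic minimal representation-infinite case, which is harmless since a Morita equivalence composed with a restriction to a minimal representation-infinite quotient produces a fresh representation embedding and only makes the target ``smaller''). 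In the first subcase $F\circ E$ is a self-embedding of $rep\,K_{2}$; by Lemma 7 i) it is an equivalence in characteristic $0$, so $F$ is full and dense on the essential image of $E$, and a counting/faithfulness argument of the type used in Proposition 6 (or: $F$ reflects iso-classes, is faithful, and $E$ already exhausts all indecomposables up to the action of the reflection functors) forces $F$ itself to be an equivalence. The subcase $A\simeq k[X,Y]/(X,Y)^{2}$ is exactly part iii), so I would organize the write-up to prove iii) first, or to cross-refer.

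For part ii), suppose for contradiction that a representation embedding $F:mod\,kK_{2}\longrightarrow mod\,k[X,Y]/(X,Y)^{2}$ exists. Composing with the functor $G:mod\,k[X,Y]/(X,Y)^{2}\longrightarrow mod\,kK_{2}$ does not immediately help because $G$ is not exact; instead I would exploit the numerical rigidity available in characteristic $0$. The key point: the indecomposable $kK_{2}$-modules come in $\mathbf{P}^{1}(k)$-families in every regular length, and each such family is \emph{homogeneous} in dimension, whereas $F$ must send non-isomorphic modules to non-isomorphic indecomposable $k[X,Y]/(X,Y)^{2}$-modules of a dimension bounded below by the original (faithfulness plus $F$ does not decrease total dimension). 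Run this against the structure of $mod\,k[X,Y]/(X,Y)^{2}$: an indecomposable of that algebra is a string/band module, i.e. essentially a pair $(V,W;\alpha,\beta)$ with $\alpha,\beta$ linear maps, $X,Y$ acting nilpotently; its endomorphism algebra and extension spaces are controlled by a single rational canonical form. Concretely, I would feed the three orthogonal bricks $P(0)$, $P(1)$, $P(0)^{2}\hookrightarrow P(1)$-type data, or rather a suitable infinite orthogonal family produced as in Lemma 3, through $F$ and derive that $F$ would yield infinitely many pairwise $\mathrm{Hom}$-orthogonal indecomposables of bounded dimension over $k[X,Y]/(X,Y)^{2}$ that are pairwise non-conjugate in a way incompatible with the one-parameter normal form. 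More efficiently: apply Lemma 7 in the reverse direction. If $F$ existed, then $H\circ F: mod\,kK_{2}\longrightarrow mod\,k[X,Y]/(X,Y)^{2}$ would be... no --- the clean route is: compose $F$ with $G$ after first restricting to the full exact subcategory of $mod\,kK_{2}$ on which $G$ is exact (those modules with no copy of the simple $(0,k,0,0)$ as a summand of the relevant sub), obtaining an exact functor there; since that subcategory contains $rep'\,K_{2}$ and hence a copy of $mod\,k[T]$, and since $G$ reflects enough iso-classes on it, one gets a representation embedding of a representation-infinite subcategory of $mod\,kK_{2}$ back into $mod\,kK_{2}$ whose image is forced by Lemma 7 i) to be ``essentially everything'', contradicting that the composite factors through $mod\,k[X,Y]/(X,Y)^{2}$ which, by part iii) once proved, embeds \emph{only} via equivalences and hence cannot receive $mod\,kK_{2}$ (as $kK_{2}\not\simeq k[X,Y]/(X,Y)^{2}$, e.g. one is hereditary and the other is not). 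So ii) becomes a formal consequence of iii) plus the non-Morita-equivalence of the two algebras.

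For part iii), the reduction is the same as in Lemma 11: pass to the basic algebra, then to a minimal representation-infinite quotient $A'$, producing a representation embedding $mod\,A'\longrightarrow mod\,A\xrightarrow{\ F\ }mod\,k[X,Y]/(X,Y)^{2}$; it suffices to treat $A'$, i.e. assume $A$ basic minimal representation-infinite. By Lemma 11, either $A\simeq k[X,Y]/(X,Y)^{2}$ --- in which case $F$ is an endo-representation-embedding of $mod\,k[X,Y]/(X,Y)^{2}$ and I must show every such functor is an equivalence --- or there is a representation embedding $E:rep\,K_{2}\longrightarrow mod\,A$, and then $F\circ E$ combined with $G$ (on the exact subcategory where $G$ works, which contains $rep'\,K_{2}$) gives an exact, iso-class-reflecting functor from a representation-infinite subcategory of $rep\,K_{2}$ to $mod\,kK_{2}$ which by Lemma 7 i) is essentially an equivalence onto almost all of $mod\,kK_{2}$, forcing the intermediate category $mod\,k[X,Y]/(X,Y)^{2}$ to be ``at least as big as $mod\,kK_{2}$'' in the embedding order; but a symmetric argument (or part ii)) shows it is strictly smaller unless $A\simeq k[X,Y]/(X,Y)^{2}$, contradiction. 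This leaves the genuine core of iii): \emph{every representation embedding $F$ of $mod\,k[X,Y]/(X,Y)^{2}$ into itself is an equivalence in characteristic $0$.} Here I would transport the problem to $mod\,kK_{2}$ via $G$ and $H$: set $\bar F = G\circ F\circ H$, check it is a well-defined representation embedding $rep\,K_{2}\longrightarrow rep\,K_{2}$ after the usual care about the exceptional simple $(0,k,0,0)$ and the non-exactness of $G$ (this is where one uses that $F$ preserves epimorphisms and that $HGM\simeq M$), apply Lemma 7 i) to conclude $\bar F$ is an equivalence, and finally pull this back through $HGM\simeq M$ and $GHU\simeq U$ (for $U\not\simeq(0,k,0,0)$) to conclude $F$ is dense and full, hence an equivalence. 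The main obstacle is precisely this bookkeeping around $G$'s failure of exactness and around the single bad simple module: one must verify carefully that the composite $\bar F$ really is exact and really reflects iso-classes on all of $rep\,K_{2}$ (not just generically), since that is the hypothesis Lemma 7 needs; I expect this to require examining what $F$ does to the simple of $k[X,Y]/(X,Y)^{2}$ and to the short exact sequences $0\to S\to P\to S^{2}\to 0$ etc., and checking that no information is lost when $G$ is applied. Everything else is formal.
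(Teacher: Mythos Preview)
Your proposal has several genuine gaps and a circularity that needs to be untangled.

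\textbf{Part i).} Reducing via Lemma~8 (the paper's analogue of your Lemma~11) leaves the subcase where the minimal quotient is $k[X,Y]/(X,Y)^{2}$, and you claim this ``is exactly part iii)''. It is not: part iii) concerns embeddings \emph{into} $mod\,k[X,Y]/(X,Y)^{2}$, whereas here you would need to handle an embedding $mod\,k[X,Y]/(X,Y)^{2}\longrightarrow mod\,kK_{2}$. The paper sidesteps this by using Proposition~6 rather than Lemma~8: a minimal representation-infinite quotient $B$ with a representation embedding into $mod\,kK_{2}$ is tame concealed, and since all tubes of $kK_{2}$ are homogeneous, part ii) of that proposition forces $B\simeq kK_{2}$. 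Then $F\circ E$ is a self-embedding of $rep\,K_{2}$, an equivalence by Lemma~7, and $F$ is an equivalence.

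\textbf{Parts ii) and iii).} You propose to deduce ii) from iii), but your argument for iii) in the case where $E:rep\,K_{2}\longrightarrow mod\,A$ exists invokes ``a symmetric argument (or part ii))'' to derive the contradiction --- that is circular. The paper proves ii) first, directly, by the dichotomy you keep circling around but never pin down: \emph{does $F$ hit the unique simple $S$?} If not, then $G\circ F:mod\,kK_{2}\longrightarrow mod\,kK_{2}$ is a genuine representation embedding (no $S$-summand appears in the image, so $G$ is exact there; $HG\simeq\mathrm{id}$ gives indecomposability and reflection of iso-classes), hence an equivalence by i), impossible since $G$ never hits $P(0)$. If $F$ does hit $S$, then up to duality $FP(0)\simeq S$, and one analyses $FP(1)=HY$ with $Y=P(i)$; the case $i=1$ fails because $F(P(1)/P(0)^{2})\simeq S\simeq FP(0)$, and $i>1$ is killed by the same polynomial-identity argument as in Lemma~7. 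With ii) in hand, iii) is clean: Lemma~8 forces $C=k[X,Y]/(X,Y)^{2}$, and for the resulting self-embedding $F\circ E$ one runs the same ``hit $S$ or not'' dichotomy; the ``hits $S$'' branch gives $(F\circ E)S\simeq S$, so by Proposition~1 the bimodule is free of rank~$1$ and $F\circ E$ is an equivalence.

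\textbf{The transfer $\bar F=G\circ F\circ H$.} This cannot work as a representation embedding: $H$ sends both simples $P(0)$ and $I(0)$ of $kK_{2}$ to $S$, so $\bar F(P(0))\simeq\bar F(I(0))$ and $\bar F$ fails to reflect isomorphism classes from the outset. No amount of bookkeeping around $G$ will repair this. The correct use of $G$ is one-sided, as a post-composition applied only after one knows $S$ is avoided, exactly as in the paper's treatment of ii) and iii).
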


\begin{proof}  We consider a minimal representation-infinite quotient $B$ of $A$. By proposition 6,
 $B$ is tame concealed and it has only homogeneous tubes whence $B$ is
( Morita-equivalent to )
 $kK_{2}$ and we have  
 a representation embedding $$ E: mod \,B \longrightarrow mod\, A.$$ The composition $F\circ E$  
is an equivalence by lemma 7. It follows that $F$ induces a bijection on the isomorphism classes and on the homomorphism spaces whence it is an equivalence.

We prove the second part. So assume that there is a representation embedding $F$. If it does not hit  $S$ then $GF$ is a representation embedding whence an equivalence. This is impossible 
because $P(0)$ is not hit by $G$.
Therefore we have $FX\simeq S$ for some $X$. The exactness of $F$ shows that $X$ is simple. Up to duality we can assume that $FP(0)\simeq S $ holds.
 $FP(1)=HY$ has infinitely many non-isomorphic quotients whence $GHY\simeq Y$ also does and $Y=P(i)$ for some $i\geq 1$ follows. For $i=1$ the quotient $P(1)/(P(0)^{2})$ is also
 mapped to $S$ by $F$. For $i>1$ we have $Hom_{B}(FP(0),FP(1)) = Hom_{B}(HP(0),HP(i)) \simeq Hom_{A}(P(0),P(i))$ and we  can  argue as in  the proof of lemma 7 
to find a contradiction. Therefore  $F$ does not exist.

For the last assertion we look at a minimal representation-infinite quotient $B$  of the basic algebra to $A$. By lemma  8  there is a representation embedding 
$$E:mod \,C \longrightarrow mod \,B  \longrightarrow mod \,A$$ where $C=kK_{2}$ or $C=k[X,Y]/(X,Y)^{2}$. Looking at the composition $F \circ E$ we see from part ii) 
that only the second case is possible. 
If $F\circ E$ does not hit
 $S$ then $G\circ F \circ E$ is a representation embedding which is impossible. Thus $F\circ E$ hits $S$ and so it maps $k[X,Y]/(X,Y)^{2}$ to itself whence $F\circ E$ is an equivalence.
 Then $F$ is also surjective on objects and  homomorphism spaces whence an equivalence.

\end{proof}

\begin{lemma}
 Suppose that $k$ has positive characteristic $p$.Then we have:
\begin{enumerate}
 \item There is a representation embedding $F:mod \,kK_{2} \longrightarrow mod \,k[X,Y]/(X,Y)^{2}$.
\item Let $F:mod \,A \longrightarrow kK_{2}$ be a representation embedding with  $A$ of infinite representation type.
Then $A$ is Morita-equivalent to $kK_{2} \times B$ where $B$ is a representation-finite algebra. 
\item The set of Morita-equivalence classes of finite-dimensional algebras in the embedding class of $kK_{2}$ is countable.
\end{enumerate}

\end{lemma}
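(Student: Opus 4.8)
The plan is to prove the three parts of this lemma about the positive-characteristic case, building on the functors $G,H$ between $\mathrm{mod}\,kK_2$ and $\mathrm{mod}\,k[X,Y]/(X,Y)^2$ discussed just before, on Lemma 6 and Lemma 7 (self-embeddings of $rep\,K_2$), and on Proposition 6 (minimal representation-infinite quotients are tame concealed).

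For part (i) I would take the self-embedding $F_{p}\colon rep\,K_2 \longrightarrow rep\,K_2$ of Lemma 6, which is a genuine representation embedding when $\mathrm{char}\,k = p > 0$, and compose it with $H$. The point of passing through $F_p$ first is to destroy the obstruction noted in the paragraph before the lemma: $H$ fails to be a representation embedding only because it sends the two simples of $kK_2$ to the single simple of $k[X,Y]/(X,Y)^2$ (and correspondingly $G$ fails exactness only on sequences whose kernel contains that simple as a summand). Since $F_p$ sends $P(0)$ to $P(0)$ and sends the other simple $I(0)$ to $I(p-1)$, which is not simple for $p \geq 2$, only one simple of $kK_2$ survives into the essential image of $F_p$; thus $H\circ F_p$ no longer identifies two distinct simples, and one checks that $G$ is exact on the relevant subcategory. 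It remains to verify that $H\circ F_p$ is exact (it is, since $H$ is exact and $F_p$ is exact), maps indecomposables to indecomposables ($F_p$ does by Lemma 6(iii), and $H$ preserves indecomposability since $HGM\simeq M$ and $H$ reflects decompositions via $G$ on this subcategory), and reflects isomorphism classes ($F_p$ does, and $H$ does because $GH\simeq \mathrm{id}$ away from $(0,k,0,0)$, which is not in the image of $F_p$). So $F = H\circ F_p$ works.

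For part (ii), let $F\colon \mathrm{mod}\,A \longrightarrow \mathrm{mod}\,kK_2$ be a representation embedding with $A$ representation-infinite. The idea is to show $A$ has exactly one minimal representation-infinite quotient, that this quotient is $kK_2$, and that the ``rest'' of $A$ is representation-finite and splits off. By Proposition 6(i), for a minimal representation-infinite quotient $B$ of the basic algebra of $A$, the full subcategory $A'$ on points whose projectives map to preprojectives has an infinite preprojective component; by Proposition 6(ii), $B$ is tame concealed, and since $F\circ E$ (with $E\colon \mathrm{mod}\,B\to\mathrm{mod}\,A$) lands in $\mathrm{mod}\,kK_2$ which has only homogeneous tubes and tubular type $(\,)$, Proposition 6's tube-rank inequality forces $B$ to have tubular type $\leq$ that of $kK_2$, hence $B$ is $kK_2$ itself. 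The composition $F\circ E\colon \mathrm{mod}\,kK_2 \to \mathrm{mod}\,kK_2$ is a representation embedding, hence by Lemma 7(ii) of the shape $(S^-)^i\,\mathcal G\,F_{p^m}$; in particular its essential image already contains all preinjectives and cofinitely many preprojectives. I would then argue that any indecomposable $A$-module not supported on the preimage of $B$ contributes, via faithfulness and dimension considerations (as in the proof of Proposition 6(i)), only finitely many indecomposables; combined with minimality this shows the complement of $B$ inside $A$ is representation-finite, and a standard idempotent/two-sided-ideal argument (the $kK_2$-part is hereditary, so the relevant idempotent is central) gives the Morita decomposition $A \sim kK_2 \times B'$ with $B'$ representation-finite.

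For part (iii) the countability is then immediate bookkeeping. Every algebra in the embedding class of $kK_2$ has a representation embedding both into and out of $\mathrm{mod}\,kK_2$; by part (ii) such an algebra is Morita-equivalent to $kK_2 \times B'$ with $B'$ representation-finite, and moreover $B'$ must itself embed into $\mathrm{mod}\,kK_2$ (restricting $F$), so $B'$ is a representation-finite algebra all of whose indecomposables embed into $\mathrm{mod}\,kK_2$; since $kK_2$ has only countably many indecomposables up to isomorphism (the $P(i)$, $I(i)$, and one homogeneous tube per $\lambda\in\mathbf P^1(k)$ — here one must be slightly careful, as the regular modules form a $\mathbf P^1(k)$-family, so I would instead bound $B'$ by noting it has boundedly many simples and its indecomposables have bounded dimension, forcing finitely many possibilities for each bound and countably many overall, e.g. via a bound on the number of points coming from $\dim kK_2$ being preserved up to the embedding and the fact that representation-finite algebras with a fixed number of points and fixed dimension form a finite set up to isomorphism). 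Enumerating over the finitely-many-per-dimension representation-finite $B'$ gives a countable set of Morita classes. The main obstacle is part (ii): pinning down that the complement of the tame-concealed $kK_2$-quotient is representation-finite and actually splits off as a direct factor, rather than merely sitting inside $A$ — this is where one must combine the preprojective-component analysis of Proposition 6(i), the BT2-style finiteness argument, minimality of representation-infinite type, and the hereditariness of the Kronecker factor to get centrality of the splitting idempotent.
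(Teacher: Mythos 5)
Your part (i) is essentially the paper's argument (compose $F_p$ with $H$, noting $F_p$ avoids the simple injective), and part (iii) has the right overall shape — though the paper bounds the number of Morita classes of representation-finite algebras directly by invoking the existence of multiplicative bases, which is the deep input you would need (and do not name) behind your claim that representation-finite algebras with bounded dimension and number of points form finitely many isomorphism classes. The paper also supplies the easy embeddings $kK_2\times k^n\hookrightarrow\mathrm{mod}\,kK_2$ to see the set is countably \emph{infinite}, which you omit.

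The genuine gap is in part (ii), at exactly the spot you flag as the obstacle. Two sub-steps are missing. First, the splitting $A\sim kK_2\times B'$: the claim that ``the $kK_2$-part is hereditary, so the relevant idempotent is central'' is not a valid deduction — hereditariness of a quotient or convex subcategory does not make the associated idempotent central. The paper instead pins down the two Kronecker points $x,y$ inside $A$ and rules out any additional arrow touching them by a concrete module-theoretic argument: an extra arrow $\gamma\colon y\to z$ would give, via $FP(z)$ preprojective and the fact that nonzero maps between preprojective Kronecker indecomposables are \emph{injective}, two linearly independent compositions $\gamma\alpha,\gamma\beta$, making $A$ wild; dually one uses that nonzero maps between preinjective Kronecker indecomposables are \emph{surjective} to forbid arrows into $x$. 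Nothing in your sketch supplies this. Second, that the complementary factor $B'$ is representation-finite: you invoke ``minimality'', but $A$ is not assumed minimal representation-infinite, so there is nothing to appeal to. The paper's argument is quite different and uses Lemma 7(ii) in a specific way: if $B'$ were representation-infinite one would get two representation embeddings $G_1,G_2\colon\mathrm{mod}\,kK_2\to\mathrm{mod}\,kK_2$ with disjoint essential images, but each $G_i=(S^-)^{j_i}\mathcal G_i F_{p^{m_i}}$ must contain all regular indecomposables of regular length a multiple of $p^{m_i}$ in its essential image, so the images intersect — a contradiction. Your note that $F\circ E$ has the shape from Lemma 7(ii) is a step in the right direction, but you never deploy the disjoint-images-must-intersect contradiction that actually closes this gap.
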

\begin{proof} For part i) we take the representation embedding  $$F_{p}: rep \,K_{2} \longrightarrow rep \,K_{2}$$ as in lemma 6. It does not hit the simple injective and the composition
 $F=H\circ F_{p}$ with the functor $H$  is a representation embedding as one sees easily.

In part ii) we  can assume that $A$ is basic and we consider it as a finite category. In the convex subcategory $A'$ introduced in part i) of  proposition 6 there is the connected component
 $B$ 
consisting of the points $x$ such that the correponding indecomposable $A'$-modules all belong to the infinite preprojective component $C$ given by part i) of propostion 6. Finally
 let $D$ be a 
full subcategory of $B$ of smallest cardinality such that $C$ contains infinitely many $B$-modules with support $D$. Then $D$ is connected convex and it has a preprojective component 
containing infinitely many  sincere indecomposables  whence $D$ is a representation-infinite tilted algebra. Extension by zero and the given $F$ give us a representation embedding

$$ mod \,D \longrightarrow mod \,A \longrightarrow mod \,kK_{2}$$ and so $D$ is tame concealed. By part ii)  of proposition 6 we find $D\simeq kK_{2}$ because $D$ has only homogeneous tubes.

 Let $x,y$ be the support of $D$ and let 
$\alpha$ and $\beta$ be the two arrows from $x$ to $y$. Suppose there is an arrow $\gamma:y \longrightarrow z$. Then $FP(z)$ is also preprojective and $\gamma \circ \alpha$,$\gamma\circ \beta$ 
are linearly independent because $F$ is faithful and because all non-zero homomorphisms between indecomposable preprojective Kronecker-modules are injective. We get the contradiction that $A$ 
is wild. The same contradiction would follow from an additional arrow starting in $x$ or an additional arrow ending in $y$. 

Now the injective $I(y)$ to the sink $y$ has infinitely many submodules  whence $F(I(y))$ belongs to the preinjective component of $kK_{2}$. Let $\delta:z \longrightarrow x$ be 
an arrow in the quiver of $A$. Then $FI(z)$ and $FI(x)$ are also preinjective and there is no relation on the full subquiver supported by $\alpha, \beta,\delta $ because all non-zero maps 
between indecomposable preinjectives in $rep \,K_{2}$ are surjective. Thus $A$ would be wild. Therefore the points $x,y$ define a connected component of $A$ and we obtain
$A\simeq kK_{2}\times B$. 

If $B$ is representation-infinite we find another copy of $kK_{2}$ as a direct factor of $B $ and $A$ and consequently two representation embeddings
 $$G_{i}:mod \,kK_{2}\longrightarrow mod \,kK_{2}$$ whose essential images have no indecomposable in common. But by part ii) of lemma 7 there are natural numbers $m_{i}$ such that the 
essential image of $G_{i}$ 
contains all regular indecomposables of regular length a multiple of $p^{m_{i}}$. In particular the essential images intersect. Thus $B$ is representation-finite.

Finally, by the existence of multiplicative bases for representation-finite algebras there are only countably many Morita-equivalence classes of representation-finite algebras and so it 
suffices
 to construct for any natural number $n$ an embedding of $kK_{2}\times k^{n}$ into $rep \,K_{2}$. One starts with the embedding $F_{p}:mod \,kK_{2} \longrightarrow mod \,kK_{2}$ that does not
 hit a regular simple and one emdeds the semisimple category $mod \,k^{n}$ by choosing $n$ regular simples as the images of the simples.
\end{proof}

As an easy consequence of the last two lemmata one gets that two algebras of the infinite family $k\langle X,Y \rangle/ (X^{2},Y^{2},XY-\lambda YX)$ are in the same embedding class iff 
they are isomorphic. Thus there are infinitely many embedding classes of tame algebras.

At the end we  summarize our results about minimal embedding classes. 
\begin{theorem}Let $A$ be a representation-infinite algebra. Then the following is true:
\begin{enumerate}
 \item For $char \,k=0$ we have $kK_{2} \leq A$ or $k[X,Y]/(X,Y)^{2} \leq A$. The embedding classes to $kK_{2}$ and $k[X,Y]/(X,Y)^{2}$ are minimal and both contain only one Morita-equivalence class.
\item For $char \,k=p>0$ we have $kK_{2} \leq A$. Thus there is only one minimal embedding class, but it contains countably many Morita-equivalence classes.
\end{enumerate}

\end{theorem}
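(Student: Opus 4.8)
The statement only assembles what has already been established, so the plan is simply to combine lemmas 8, 9 and 10; no new difficulty arises, and the only point to keep in mind is the standing convention that ``minimal'' refers to the poset of embedding classes of \emph{representation-infinite} algebras — the class of $k$ lies below everything, so without this restriction nothing would be minimal.

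First I would reduce to the minimal representation-infinite case. For a representation-infinite $A$, pass to the basic algebra $B$ Morita-equivalent to it and to a minimal representation-infinite residue algebra $B'$ of $B$; as in section 5.2 the functors $mod\,B'\to mod\,B\to mod\,A$ are representation embeddings, so $B'\leq A$. Applying lemma 8 to $B'$ then yields the dichotomy: either there is a representation embedding $rep\,K_{2}\to mod\,B'$, and hence $kK_{2}\leq B'\leq A$, or $B'\simeq k[X,Y]/(X,Y)^{2}$, and hence $k[X,Y]/(X,Y)^{2}\leq A$.

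In characteristic $0$ this dichotomy is already the first assertion of part i). For the minimality, suppose $C$ is representation-infinite with $C\leq kK_{2}$: by part i) of lemma 9 the representation embedding $mod\,C\to mod\,kK_{2}$ is then an equivalence, so $C$ is Morita-equivalent to $kK_{2}$. Since conversely $kK_{2}\leq C$ forces $C$ to be representation-infinite (as $kK_{2}$ has infinitely many indecomposables), the embedding class of $kK_{2}$ consists of exactly the algebras Morita-equivalent to $kK_{2}$ and is in particular minimal; part iii) of lemma 9 gives the same conclusion for $k[X,Y]/(X,Y)^{2}$. Finally part ii) of lemma 9 rules out a representation embedding $mod\,kK_{2}\to mod\,k[X,Y]/(X,Y)^{2}$, so the two minimal classes are distinct.

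In characteristic $p>0$ the dichotomy collapses: in the exceptional case part i) of lemma 10 supplies a representation embedding $mod\,kK_{2}\to mod\,k[X,Y]/(X,Y)^{2}$, whence $kK_{2}\leq k[X,Y]/(X,Y)^{2}\leq A$. Thus $kK_{2}\leq A$ for every representation-infinite $A$, the embedding class of $kK_{2}$ lies below all others and is the unique minimal one, and part iii) of lemma 10 says it comprises only countably many Morita-equivalence classes. I do not expect a genuine obstacle here; the substantive inputs — ray categories and their universal coverings behind lemma 8, and the determination of the self-embeddings of $rep\,K_{2}$ behind lemmas 9 and 10 — have already been provided, and the present argument is bookkeeping with the relation $\leq$.
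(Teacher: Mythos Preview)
Your proposal is correct and matches the paper's approach: the theorem is stated in the paper as a summary of the preceding lemmas with no separate proof, and your argument does exactly this bookkeeping, invoking lemma 8 for the dichotomy, lemma 9 for the characteristic $0$ case, and lemma 10 for positive characteristic.
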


\end{document}